\def\vbar{\mathchoice{\vrule height6.3ptdepth-.5ptwidth.8pt\kern- .8pt}
{\vrule height6.3ptdepth-.5ptwidth.8pt\kern-.8pt} {\vrule
height4.1ptdepth-.35ptwidth.6pt\kern-.6pt} {\vrule
height3.1ptdepth-.25ptwidth.5pt\kern-.5pt}}
\def\fudge{\mathchoice{}{}{\mkern.5mu}{\mkern.8mu}}
\def\bbc#1#2{{\rm \mkern#2mu\vbar\mkern-#2mu#1}}
\def\bbb#1{{\rm I\mkern-3.5mu #1}}
\def\bba#1#2{{\rm #1\mkern-#2mu\fudge #1}}
\def\bb#1{{\count4=`#1 \advance\count4by-64 \ifcase\count4\or\bba
A{11.5}\or \bbb B\or\bbc C{5}\or\bbb D\or\bbb E\or\bbb F \or\bbc
G{5}\or\bbb H\or \bbb I\or\bbc J{3}\or\bbb K\or\bbb L \or\bbb
M\or\bbb N\or\bbc O{5} \or \bbb P\or\bbc C{5}\or\bbb B\or\bbc
S{4.2}\or\bba T{10.5}\or\bbc U{5}\or \bba V{12}\or\bba
W{16.5}\or\bba X{11}\or\bba Y{11.7}\or\bba Z{7.5}\fi}}
\newcommand{\K}{{\mathbb{K}}}
\newtheorem{df}{Definition}[section]
\newtheorem{thm}{Theorem}[section]
\newtheorem{cor}{Corollary}[section]
\newtheorem{rem}{Remark}[section]
\newtheorem{prop}{Proposition}[section]
\newtheorem{exa}{Example}[section]
\newtheorem{lem}{Lemma}[section]
\begin{document}

\date{}
\title{Rota-Baxter Operators on  pre-Lie superalgebras and beyond}
\author{K. Abdaoui, S. Mabrouk and A. Makhlouf }
\address{El-Kadri  Abdaoui , Universit\'{e} de Sfax, Facult\'{e} des Sciences Sfax,  BP
1171, 3038 Sfax, Tunisia.}
\address{Sami Mabrouk, Universit\'{e} de Gafsa,  Facult\'{e} des Sciences, Gafsa Tunisia}%
\address{Abdenacer Makhlouf, Universit\'{e} de Haute Alsace, 4 rue des fr\`eres Lumi\`ere, 68093 Mulhouse France.}
\email{Abdaouielkadri@hotmail.com}
\email{Mabrouksami00@yahoo.fr}
\email{Abdenacer.Makhlouf@uha.fr}
 \maketitle{}

 \begin{abstract}
 In this paper, we study Rota-Baxter operators and super $\mathcal{O}$-operator of associative superalgebras, Lie superalgebras, pre-Lie superalgebras and $L$-dendriform superalgebras. Then we give some properties of pre-Lie superalgebras constructed from associative superalgebras, Lie superalgebras and $L$-dendriform superalgebras. Moreover,  we provide all Rota-Baxter operators of weight zero on  complex pre-Lie superalgebras of dimensions $2$ and $3$.
 \end{abstract}
\section*{Introduction}
Rota-Baxter operators of weight $\lambda \in \mathbb{K}$ fulfil the so-called Rota-Baxter relation which may be regarded as one possible generalization of the standard shuffle relation \cite{Guo-Keigher shuffleprod,GC-Rota1998}. They appeared for the first time in the work of the mathematician G. Baxter \cite{Baxter1960} in $1960$ and were then intensively studied by F. V. Atkinson \cite{Atkinson1967}, J. B. Miller \cite{Miller1969}, G.-C. Rota \cite{Rota1995}, P. Cartier \cite{Cartier1972} and  more recently they reappeared in the work of L. Guo \cite{Guo-Introd} and K. Ebrahimi-Fard \cite{Ebrahimi-loday.algebras}. \\
Pre-Lie algebras $($called also left-symmetric algebras, Vinberg algebras, quasi-associative algebras$)$ are a class of a natural algebraic systems appearing in many fields in mathematics and mathematical physics. They were first mentioned by A. Cayley in $1890$ \cite{Cayley} as a kind of rooted tree algebra and later arose again from the study of convex homogeneous cones \cite{Vinberg}, affine manifold and affine structures on Lie groups \cite{Koszul}, and deformation of associative algebras \cite{Gerstenhaber}. They play an important role in the study of symplectic and complex structures on Lie groups and Lie algebras \cite{Andrada-Salamon,Chu,Dardié-Medina1,Dardié-Medina2,Lichnerowicz-Medina}, phases spaces of Lie algebras \cite{Bai phase spaces,Kupershmidt2}, certain integrable systems \cite{Bordemann}, classical and quantum Yang-Baxter equations \cite{Diata-Medina}, combinatorics \cite{Ebrahimi-loday.algebras}, quantum field theory \cite{Connes-Kreimer}, and operads \cite{Chapoton-Livernet}. See \cite{Burde} for  a survey. Recently, pre-Lie superalgebras, the  $\mathbb{Z}_2$-graded version of pre-Lie algebras, also appeared in many others fields; see for example \cite{Chapoton-Livernet,Gerstenhaber,Mikhalev}. To our knowledge, they were first introduced by Gerstenhaber in $1963$ to study the cohomology structure of associative algebras \cite{Gerstenhaber}. They are a class of natural algebraic appearing in many fields in mathematics and mathematical physics, especially in  super-symplectic geometry, vertex superalgebras and graded classical Yang-Baxter equation. Recently, the classifications of complex pre-Lie superalgebras in dimensions two and three were given by R. Zhang and C.M.  Bai \cite{Bai and Zhang classif}. \\
 It turns out that the construction of pre-Lie superalgebras from associative superalgebras uses Rota-Baxter operators. Let $\mathcal{A}$ be an associative superalgebra (product of $x$ and $y$ is denoted by $xy$) and $R$ be a Rota-Baxter operator of weight $\lambda$ on $\mathcal{A}$, which means that it satisfies, for any homogeneous elements $x,y$ in $\mathcal{A}$, the identity
\begin{equation}\label{R-B assoc}
R(x)R(y)=R\Big( R(x)y+xR(y)+\lambda xy\Big).
\end{equation}
If $\lambda=0$ (resp. $\lambda=-1$),  the product
\begin{equation}\label{intr-ass=pre-Lie0}
    x\circ y=R(x)y-(-1)^{|x||y|}y R(x),~~\forall~~x,y\in\mathcal{H}(\mathcal{A})
\end{equation}
resp.
\begin{equation}\label{intr-ass=pre-Lie}
    x\circ y=R(x)y-(-1)^{|x||y|}y R(x)-xy,~~\forall~~x,y\in\mathcal{H}(\mathcal{A})
\end{equation}
defines a pre-Lie superalgebra $($see Theorem \ref{RB+ass=preLie super}$)$.\\

The notion of dendriform algebras was introduced  in $1995$ by J.-L.  Loday \cite{Loday-dialgebras}. Dendriform algebras are algebras with two operations, which dichotomize the notion of associative algebras. The motivation came from algebraic $\mathbb{K}$-theory, they  have been studied quite extensively with connections to several areas in mathematics and physics, including operads, homology, Hopf algebras, Lie and Leibniz algebras, combinatorics, arithmetic and quantum field theory and so on $($see \cite{Ebrahimi-Manchon-Patr} and the references therein$)$.
The relationship between dendriform algebras, Rota-Baxter algebras and pre-Lie algebras was given by M. Aguiar and K. Ebrahimi-Fard \cite{Aguiar2000,Ebrahimi-loday.algebras,Ebrahimi-assoc-Nijenhuis}. C. Bai, L. Liu, L. Guo and X. Ni, generalized the concept of Rota-Baxter operator and introduced a new class of algebras, namely, $L$-dendriform algebras, in \cite{ Bai-Liu OOperaorLie,Bai-Liu OOperaorAss,Bai-Liu L-dendrifom}.
Moreover, a close relationship among associative superalgebras, Lie superalgebras, pre-Lie superalgebras and dendriform superalgebras is given as follows in the sense of commutative diagram of categories:
$$\begin{array}{ccc}
  \text{Lie superalgebra} & \longleftarrow & \text{pre-Lie superalgebra}\\
  \uparrow &  & \uparrow \\
  \text{associative superalgebra} & \longleftarrow & \text{dendriform superalgebra}
\end{array}$$
Recently, the notion of Rota-Baxter operator on a bimodules was introduced by M. Aguiar \cite{Aguiar2004}. The construction of associative, Lie, pre-Lie and $L$-dendriform superalgebras are extended to the corresponding categories of bimodules.\\
The main purpose of this paper is to study, through Rota-Baxter operators and $\mathcal{O}$-operators,  the relationship between associative superalgebras, Lie superalgebras, pre-Lie superalgebras and $L$-dendriform superalgebras. Moreover, we classify  Rota-Baxter operators of weight zero on the complex pre-Lie superalgebras of dimensions $2$ and $3$.\\  This paper is organized as follows.
In Section $1$, we recall some definitions of associative superalgebras, Lie superalgebras and pre-Lie superalgebras and we introduce the notion of super $\mathcal{O}$-operator of these superalgebras that generalizes the notion of Rota-Baxter operators. We show that every Rota-Baxter associative superalgebra of weight $\lambda=-1$ gives rise to a Rota-Baxter Lie superalgebra. Moreover, a super $\mathcal{O}$-operator on  a Lie superalgebra $($of weight zero$)$ gives rise to  a pre-Lie superalgebra. As Example of computations,   we provide  all Rota-Baxter operators $($of weight zero$)$ on the orthosymplectic Lie superalgebra $osp(1,2)$. In Section $2$, we introduce the notion of $L$-dendriform superalgebra and then study some fundamental properties of $L$-dendriform superalgebras in terms of  super $\mathcal{O}$-operator of pre-Lie superalgebras. Their relationship with associative superalgebras are also described. Sections $3$ and $4$  are devoted to classification of all Rota-Baxter operators $($of weight zero$)$ on the complex pre-Lie superalgebras of dimension $2$ and $3$ with $1$-dimensional even part and with $2$-dimensional even part respectively.\\
Throughout this paper, all superalgebras are finite-dimensional and are over a field $\mathbb{K}$ of characteristic zero.  Let $(\mathcal{A},\circ)$ be a superalgebra, then $L_\circ$ and $R_\circ$ denote the even  left and right multiplication operators $L_\circ,R_\circ:\mathcal{A}\rightarrow End(\mathcal{A})$ defined as  $L_\circ(x)(y)=(-1)^{|x||y|}R_\circ(y)(x)=x\circ y$ for all homogeneous element $x,y$ in $\mathcal{A}$.
 In particular, when $(\mathcal{A},[ \ ,\ ])$ is a Lie superalgebra, we let $ad(x)$ denote the adjoint operator, that is, $ad(x)(y)=[x,y]$ for all homogeneous element $x,y$ in $\mathcal{A}$.

 \section{ Rota-Baxter associative superalgebras, pre-Lie superalgebras and Lie superalgebras}
Let $(\mathcal{A},\circ)$ be an algebra over a field $\mathbb{K}$. It is said to be a superalgebra if the underlying vector space of $\mathcal{A}$ is $\mathbb{Z}_2$-graded, that is, $\mathcal{A}=\mathcal{A}_0\oplus \mathcal{A}_1$, and $\mathcal{A}_i\circ \mathcal{A}_j\subset \mathcal{A}_{i+j}$, for $i,j\in \mathbb{Z}_2$. An element of $\mathcal{A}_0$ is said to be  even and an element of $\mathcal{A}_1$ is said to be odd. The elements of $\mathcal{A}_{j},~~j \in \mathbb{Z}_2$, are said to be homogenous and of parity $j$. The parity of a homogeneous element $x$ is denoted by $|x|$ and we refer to the set of homogeneous elements of $\mathcal{A}$ by $\mathcal{H}(\mathcal{A})$.\\

We extend to graded case the concepts of  $\mathcal{A}$-bimodule $\K$-algebra,  $\mathcal{O}$-operator and extended  $\mathcal{O}$-operator introduced in \cite{Bai-Liu OOperaorAss}.
\begin{df}\ \begin{enumerate}
\item An associative superalgebra is a pair $(\mathcal{A},\mu)$ consisting of a $\mathbb{Z}_2$-graded vector space $\mathcal{A}$
and an even bilinear map $\mu:\mathcal{A}\otimes \mathcal{A}\longrightarrow \mathcal{A},~~(\mathcal{A}_i\mathcal{A}_j\subseteq \mathcal{A}_{i+j},~~\forall~~i,j\in \mathbb{Z}_2) $ satisfying for all $x,y,z\in\mathcal{H}(\mathcal{A})$
$$x(y z)=(x y)z.$$
\item Let $(\mathcal{A},\mu)$ be an associative superalgebra and $V$ be a $\mathbb{Z}_2$-graded vector space. Let $l,r:\mathcal{A}\longrightarrow End(V)$ be two even linear maps.  A triple $(V,l,r)$ is called an $\mathcal{A}$-bimodule if for all  $x,y \in \mathcal{H}(\mathcal{A})$ and $v \in \mathcal{H}(V)$
$$ l(xy)(v)=l(x)l(y)(v),~~r(xy)(v)=r(y)r(x)(v),~~l(x)r(y)(v)=r(y)l(x)(v).$$
Moreover, the quadruple $(V,\mu_V,l,r)$ is said to be an  $\mathcal{A}$-bimodule $\K$-superalgebra if $(V,l,r)$  is an  $\mathcal{A}$-bimodule compatible with the multiplication $\mu_V$ on $V$, that is, for all $x,y \in\mathcal{H}(\mathcal{A})$ and $v,w\in \mathcal{H}(V),$ 
\begin{eqnarray*}&  l(x)(\mu_V(v,w))=\mu_V(l(x)(v),w), ~~ r(x)(\mu_V(v,w))= \mu_V(v,r(x)(w)), \\& ~~ \mu_V(r(x)(v),w)=\mu_V(v,l(x)(w)).
\end{eqnarray*}

 \item Fix $\lambda\in \K$,  A pair $(T,T')$ of even linear maps $T,T':V\longrightarrow \mathcal{A}$ is called an extended super $\mathcal{O}$-operator with modification $T'$ of
weight $\lambda$  associated to the bimodule $(V,l,r)$ if $T$ satisfies
\begin{eqnarray}
& \lambda l(T'(u))v=\lambda r(T'(v))u,\\
&  T(u)T(v)=T\Big(l(T(u))v+(-1)^{|u||v|}r(T(v))u\Big)+\lambda T'(u)T'(v),~~\forall~~u,v \in \mathcal{H}(V). 
 \end{eqnarray}
 \item  An even  linear map $T:V\longrightarrow \mathcal{A}$ is called a super $\mathcal{O}$-operator of
weight $\lambda$  associated to the bimodule $\K$-superalgebra  $(V,\mu_V,l,r)$ if it satisfies
\begin{eqnarray}
  T(u)T(v)=T\Big(l(T(u))v+(-1)^{|u||v|}r(T(v))u+\lambda \mu_V(u,v)\Big),~~\forall~~u,v \in \mathcal{H}(V). 
 \end{eqnarray}
\end{enumerate}
\end{df}
Notice that the notions of  super $\mathcal{O}$-operator and extended  super $\mathcal{O}$-operator coincide when $\lambda =0$.



In particular, a super $\mathcal{O}$-operator of weight $\lambda\in \mathbb{K} $ associated to the bimodule $\K$-algebra $(\mathcal{A},\mu_A,L_\mu,R_\mu)$ is called a Rota-Baxter operator of weight $\lambda$ on $\mathcal{A}$, that is, $R$ satisfies  the identity \eqref{R-B assoc}.
We denote by a triple $(\mathcal{A},\mu,R)$ the Rota-Baxter associative superalgebra.\\


We define now Rota-Baxter operators on $\mathcal{A}$-bimodules.
\begin{df}Let $(\mathcal{A},\mu,R)$ be a Rota-Baxter associative superalgebra of weight zero. A Rota-Baxter operator on an  $\mathcal{A}$-bimodule $V$ $($relative to $R)$ is a map $R_V:V\longrightarrow V$ such that for all
 $x \in \mathcal{H}(\mathcal{A})$ and $v \in \mathcal{H}(V)$
\begin{eqnarray*}
    & & \ \ \ R(x)R_V(v) = R_V \Big(R(x)v+x R_V(v)\Big),\\
    & & \ \ \ R_V(v)R(x) = R_V \Big(R_V(v)x+v R(x) \Big).
    \end{eqnarray*}
\end{df}
We have similar definitions on Lie superalgebras.
 \begin{df}\ \begin{enumerate}
\item A Lie superalgebra is a pair $(\mathcal{A}, [~~,~~])$ consisting of a $\mathbb{Z}_2$-graded vector space $\mathcal{A}$, and an even
bilinear map $[~~,~~] : \mathcal{A}\otimes \mathcal{A} \longrightarrow \mathcal{A},~~([\mathcal{A}_i,\mathcal{A}_j]\subseteq \mathcal{A}_{i+j},~~\forall~~i,j\in \mathbb{Z}_2)$  satisfying for all $\ x,y,z \in \mathcal{H}(\mathcal{A})$, 
\begin{eqnarray}
& [x,y] = -(-1)^{|x||y|}[y,x],\quad \text{(super skew-symmetry)}\\& 
\label{H-sJ}
  [x,[y,z]] = [[x,y],z]+(-1)^{|x||y|}[y,[x,z]], \quad \text{(super-Jacobi identity)}.
\end{eqnarray}
\item Let $(\mathcal{A},[~~,~~])$ be a Lie superalgebra, $V$ be a $\mathbb{Z}_2$-graded vector space and  $\rho:\mathcal{A}\longrightarrow End(V)$ be an even linear map. The pair $(V,\rho)$ is said to be an $\mathcal{A}$-module or a representation of $(\mathcal{A},[~~,~~])$ if for all  $x,y\in \mathcal{H}(\mathcal{A})$ and $v\in \mathcal{H}(V),$
 \begin{eqnarray}\label{1-rep-Lie}
 \rho([x,y])(v) &=& \rho(x)\rho(y)v-(-1)^{|x||y|}\rho(y)\rho(x)v.
 \end{eqnarray}
 \\ The triple $(V,[ \ ,\ ]_V,\rho)$, where $[ \ ,\ ]_V$ is a super skew-symmetric bracket,  is said to be an $\mathcal{A}$-module $\K$-superalgebra if,  for $x\in \mathcal{H}(\mathcal{A})$ and $v,w\in \mathcal{H}(V),$
 $$ \rho(x)[ v,w ]_V=[ \rho(x)(v),w]_V+ (-1)^{|v||w|}[v, \rho(x)(w)]_V.$$ 
\item Let $(\mathcal{A},[~~,~~])$ be a Lie superalgebra and $(V,\rho)$ be a representation of $\mathcal{A}$. An even linear map
$T:V\longrightarrow \mathcal{A}$ is called a super $\mathcal{O}$-operator of weight $\lambda \in \mathbb{K}$ associated to $\mathcal{A}$-module $\K$-superalgebra $(V,[ \ ,\ ]_V,\rho)$ if $T$ satisfies
\begin{eqnarray*}
  [T(u),T(v)] &=& T\Big(\rho(T(u))v-(-1)^{|u||v|}\rho(T(v))u+\lambda [u,v]_V\Big),~~\forall~~u,v\in \mathcal{H}(V).
\end{eqnarray*}
\end{enumerate}
 In particular, a super $\mathcal{O}$-operator of weight $\lambda \in \mathbb{K}$ associated to the bimodule $(\mathcal{A},L_\circ,R_\circ)$ is called a Rota-Baxter operator of weight $\lambda \in \mathbb{K}$ on $(\mathcal{A},[~~,~~])$, that is, $R$ satisfies for all $x,y,z$ in $\mathcal{H}(\mathcal{A})$
\begin{equation}\label{Rota-baxter-Lie}
    [R(x),R(y)]=R\Big([R(x),y]-(-1)^{|x||y|}[R(y),x]+\lambda [x,y]\Big).
\end{equation}

The triple $(\mathcal{A},[~~,~~],R)$ refers to a Rota-Baxter Lie superalgebra \cite{WangHouBai-OperatorLieSuper}.
\end{df}
\begin{df}
Let $(\mathcal{A},[ ~~,~~ ],R)$ and $(\mathcal{A}',[ ~~,~~ ]',R')$ be two Rota-Baxter Lie superalgebras. An even homomorphism $f:(\mathcal{A},[ ~~,~~ ],R)\longrightarrow (\mathcal{A}',[ ~~,~~ ]',R')$ is said to be a morphism of two Rota-Baxter Lie  superalgebras if, for all $x,y\in \mathcal{H}(\mathcal{A}$, 
$
  f([x,y]) = [f(x),f(y)]'$
  and 
 $  f\circ R = R'\circ f.
$
\end{df}
\begin{prop}
 Let $(\mathcal{A},\mu,R)$ be a Rota-Baxter associative superalgebra of weight $\lambda \in \mathbb{K}$. Then the triple  $(\mathcal{A},[ ~~,~~ ],R)$, where $[x,y]=xy-(-1)^{|x||y|}yx$, is a Rota-Baxter Lie superalgebra of weight $\lambda \in \mathbb{K}$.
\end{prop}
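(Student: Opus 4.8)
The plan is to check two things: first that $(\mathcal{A},[\,,\,])$ with the supercommutator $[x,y]=xy-(-1)^{|x||y|}yx$ is a Lie superalgebra, and second that the same $R$ satisfies the Rota-Baxter Lie identity \eqref{Rota-baxter-Lie} of weight $\lambda$.

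For the first point, super skew-symmetry of the bracket is immediate from its definition. The super-Jacobi identity \eqref{H-sJ} follows from the associativity of $\mu$ by the standard expansion: one writes out each of the three nested brackets in $[x,[y,z]]$, $[[x,y],z]$ and $(-1)^{|x||y|}[y,[x,z]]$, replaces every associative product, and then checks that once the Koszul signs are accounted for all the resulting monomials cancel. This is the classical passage from an associative superalgebra to its commutator Lie superalgebra, so I would either perform the sign bookkeeping once or simply invoke it as well known.

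For the second point, I would begin from the left-hand side and expand the supercommutator, using that $R$ is even so that $|R(x)|=|x|$:
$$[R(x),R(y)] = R(x)R(y) - (-1)^{|x||y|}R(y)R(x).$$
Applying the associative Rota-Baxter relation \eqref{R-B assoc} to each of the two products and using the linearity of $R$, I obtain $[R(x),R(y)]$ as $R$ applied to the single argument
$$R(x)y+xR(y)+\lambda xy-(-1)^{|x||y|}\big(R(y)x+yR(x)+\lambda yx\big).$$

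It then remains to identify this argument with the one appearing inside $R$ in \eqref{Rota-baxter-Lie}. I would expand the three brackets there, namely $[R(x),y]=R(x)y-(-1)^{|x||y|}yR(x)$, then $[R(y),x]=R(y)x-(-1)^{|x||y|}xR(y)$, and $[x,y]=xy-(-1)^{|x||y|}yx$ (again using that $R$ is even), substitute them into $[R(x),y]-(-1)^{|x||y|}[R(y),x]+\lambda[x,y]$, and simplify using $(-1)^{2|x||y|}=1$. The six resulting monomials match, term by term, the expression displayed above, which yields the identity. The only delicate point is the consistent tracking of the Koszul signs; there is no structural obstacle here, since once both arguments of $R$ are written out the whole matter reduces to the simplification $(-1)^{2|x||y|}=1$.
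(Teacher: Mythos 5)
Your argument is correct: the supercommutator identity and the weight-$\lambda$ Rota-Baxter identity for the bracket both follow by exactly the direct expansion you describe, and the key cancellation $(-1)^{2|x||y|}=1$ is handled properly. The paper states this proposition without proof, and your verification is the standard direct computation the authors evidently intend, so there is nothing to compare beyond noting that you have supplied the omitted details correctly.
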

\begin{prop}Let $(\mathcal{A},\mu,R)$ be a Rota-Baxter associative superalgebra of weight $\lambda=-1$. Then the binary operation defined, for any homogeneous elements $x,y$ in $\mathcal{A}$,  by
\begin{eqnarray*}
  [x,y] = R(x)y-(-1)^{|x||y|}y R(x)-x y 
   + x R(y)-(-1)^{|x||y|}R(y) x+(-1)^{|x||y|}y x,
\end{eqnarray*}
defines a Rota-Baxter Lie superalgebra $(\mathcal{A},[ ~~,~~ ],R)$ of weight $\lambda=-1$.
\end{prop}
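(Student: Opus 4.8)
The plan is to recognize the bracket $[\cdot,\cdot]$ as an object that is already understood, so that neither the super-Jacobi identity nor the Rota--Baxter relation has to be attacked head on. Writing $\{x,y\}:=xy-(-1)^{|x||y|}yx$ for the associative supercommutator and $x\circ y:=R(x)y-(-1)^{|x||y|}yR(x)-xy$ for the pre-Lie product supplied by Theorem \ref{RB+ass=preLie super} (the case $\lambda=-1$), I would first verify, by expanding and collecting terms, the two rewritings
\[
[x,y]=x\circ y-(-1)^{|x||y|}y\circ x=\{R(x),y\}+\{x,R(y)\}-\{x,y\}.
\]
Both are routine sign bookkeeping, using only that $R$ is even, so that $|R(x)|=|x|$; the first exhibits $[\cdot,\cdot]$ as the commutator of $\circ$, the second as the descendent bracket built from $\{\cdot,\cdot\}$.

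The Lie superalgebra axioms then come almost for free. Super skew-symmetry is immediate from the first rewriting (or from the skew-symmetry of $\{\cdot,\cdot\}$). For the super-Jacobi identity I would invoke the standard fact that the commutator $x\circ y-(-1)^{|x||y|}y\circ x$ of any pre-Lie superalgebra is a Lie bracket (its sub-adjacent Lie superalgebra), applied to $(\mathcal{A},\circ)$, which is pre-Lie by Theorem \ref{RB+ass=preLie super}. This is the step that silently absorbs the associativity of $\mu$ together with the weight $-1$ Rota--Baxter relation, and routing the Jacobi identity through $\circ$ is what keeps the argument short; a direct verification from the super-left-symmetry of $\circ$ is also possible but longer.

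It then remains to check the Rota--Baxter relation \eqref{Rota-baxter-Lie} with $\lambda=-1$ for $[\cdot,\cdot]$. Here I would first establish the homomorphism identity $\{R(x),R(y)\}=R[x,y]$ directly from \eqref{R-B assoc} at $\lambda=-1$ (this is the weight $-1$ specialization of the preceding Proposition): expanding both $R(x)R(y)$ and $R(y)R(x)$ by the associative Rota--Baxter relation and collecting the six resulting terms reproduces exactly $R\big(\{R(x),y\}+\{x,R(y)\}-\{x,y\}\big)=R[x,y]$. Applying this identity three times to the descendent expansion
\[
[R(x),R(y)]=\{R(R(x)),R(y)\}+\{R(x),R(R(y))\}-\{R(x),R(y)\}
\]
pulls $R$ outside and gives $[R(x),R(y)]=R\big([R(x),y]+[x,R(y)]-[x,y]\big)$; finally super skew-symmetry of $[\cdot,\cdot]$ rewrites $[x,R(y)]=-(-1)^{|x||y|}[R(y),x]$, which is precisely the required form of \eqref{Rota-baxter-Lie}.

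The only genuinely substantial point is the super-Jacobi identity, and by passing through the pre-Lie product of Theorem \ref{RB+ass=preLie super} it is reduced to a known statement; everything else is Koszul-sign bookkeeping. The places most likely to hide an error are the homomorphism identity $\{R(x),R(y)\}=R[x,y]$ and the sign matching $[x,R(y)]=-(-1)^{|x||y|}[R(y),x]$, where the evenness of $R$ and the $(-1)^{|x||y|}$ factors must be tracked with care.
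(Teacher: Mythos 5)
Your proof is correct, and it takes a genuinely different route from the paper's. The paper proves the super-Jacobi identity by brute force: it expands the full cyclic sum $\circlearrowleft_{x,y,z}(-1)^{|x||z|}[x,[y,z]]$ into several dozen monomials and asserts that the result vanishes by associativity together with \eqref{R-B assoc} at $\lambda=-1$; the Rota--Baxter property of $R$ for the new bracket is then dismissed as ``easy to show.'' You instead factor the bracket in two ways, as the supercommutator of the pre-Lie product $x\circ y=R(x)y-(-1)^{|x||y|}yR(x)-xy$ of Theorem \ref{RB+ass=preLie super} and as the descendent bracket $\{R(x),y\}+\{x,R(y)\}-\{x,y\}$, which reduces the Jacobi identity to the general fact that the commutator of a pre-Lie superalgebra is a Lie bracket, and reduces the Rota--Baxter relation to the homomorphism identity $\{R(x),R(y)\}=R[x,y]$ (itself an immediate consequence of \eqref{R-B assoc} at $\lambda=-1$ and the evenness of $R$; I checked the sign bookkeeping in both rewritings and in the final step $[x,R(y)]=-(-1)^{|x||y|}[R(y),x]$, and it is right). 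What your approach buys is a short, structurally transparent argument that also supplies the verification the paper omits; what it costs is a forward reference, since Theorem \ref{RB+ass=preLie super} and the sub-adjacent-Lie-superalgebra proposition appear later in the section --- this is harmless because both are proved by independent direct computation, but if you wanted the proof to stand where the proposition is stated you would either reorder the material or note explicitly that no circularity is involved.
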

\begin{proof}Let $x,y$ and $z$  in $\mathcal{H}(\mathcal{A})$. We show that the super-Jacobi identity is satisfied
\begin{align*}
&  \ \ \ (-1)^{|x||z|}[x,[y,z]]+(-1)^{|y||z|}[z,[x,y]]+(-1)^{|x||y|}[y,[z,x]] \\
&  \ \ \ =\circlearrowleft_{x,y,z}\Big((-1)^{|x||z|}R(x)(R(y)z)-(-1)^{|x||y|}(R(y)z)R(x)-(-1)^{|x||z|}x(R(y)z)\\
&  \ \ \ \hskip0.5cm -(-1)^{|z|(|x|+|y|)}R(x)(z R(y))+ (-1)^{|y|(|x|+|z|)} (z R(y))R(x)-(-1)^{|z|(|x|+|y|)}x(z R(y))\\
&  \ \ \ \hskip0.5cm -(-1)^{|z||x|}R(x)(y z)+(-1)^{|x||y|}(y z)R(x)+(-1)^{|z||x|}x(y z)-(-1)^{|x||y|}R(R(y)z)x \\
&  \ \ \ \hskip0.5cm +(-1)^{|z||x|}x R(R(y)z)+(-1)^{|x||y|}(R(y)z)x+(-1)^{|y|(|x|+|z|)}R(z R(y))x-(-1)^{|z|(|x|+|y|)}x R(z R(y))\\
&  \ \ \ \hskip0.5cm -(-1)^{|y|(|x|+|z|)}(z R(y))x+(-1)^{|x||y|} R(y z)x-(-1)^{|z||x|}x R(y z)-(-1)^{|z|(|x|+|y|)}R(x)(R(z)y)\\
&  \ \ \ \hskip0.5cm +(-1)^{|y|(|x|+|z|)}(R(z)y)R(x)+(-1)^{|z|(|x|+|y|)}x(R(z)y)+(-1)^{|x||z|}R(x)(y R(z))-(-1)^{|x||y|}(y R(z))R(x)\\
&  \ \ \ \hskip0.5cm -(-1)^{|z||x|}x(y R(z))+(-1)^{|z|(|x|+|y|)}R(x)(z y)-(-1)^{|y|(|x|+|z|)}(z y)R(x)-(-1)^{|z|(|x|+|y|)}x(z y)\\
&  \ \ \ \hskip0.5cm +(-1)^{|y|(|x|+|z|)}R(R(z)y)x-(-1)^{|z|(|x|+|y|)}x R(R(z)y)+(-1)^{|y|(|x|+|z|)}R(R(z)y)x\\
&  \ \ \ \hskip0.5cm -(-1)^{|z|(|x|+|y|)}x R(R(z)y)
 -(-1)^{|y|(|x|+|z|)}(R(z)y)x-(-1)^{|x||y|}R(y R(z))x+(-1)^{|x||z|}x R(y R(z))\\
&  \ \ \ \hskip0.5cm +(-1)^{|x||y|}(y R(z))x-(-1)^{|y|(|x|+|z|)}R(z y)x+(-1)^{|z|(|x|+|y|)}x R(z y)+(-1)^{|y|(|x|+|z|)}(z y)x\Big).
\end{align*}
The above sum vanishes by associativity and the Rota-Baxter super-identity $(\ref{R-B assoc})$ with $\lambda=-1$.\\
It easy to show that $R$ is a Rota-Baxter operator of weight $\lambda=-1$ on the Lie superalgebra $(\mathcal{A},[ ~~,~~ ])$. Therefore $(\mathcal{A},[ ~~,~~ ],R)$ is a Rota-Baxter Lie superalgebra of weight $\lambda=-1$. Which ends the proof.
\end{proof}
We introduce the notion of super $\mathcal{O}$-operators of pre-Lie superalgebras. Then we study the relations among Lie superalgebras and pre-Lie superalgebras.
\begin{df}Let $\mathcal{A}$ be a $\mathbb{Z}_2$-graded vector space an the even binary operation denoted by
$\circ:\mathcal{A}\otimes \mathcal{A}\longrightarrow \mathcal{A}$. The pair $(\mathcal{A},\circ)$ is called a pre-Lie superalgebra if, for  $x,y,z$ in $\mathcal{H}(\mathcal{A})$, the associator
$$as(x,y,z)=(x\circ y)\circ z-x\circ(y\circ z)$$
is super-symmetric  in $x$ and $y$, that is,
$as(x,y,z)=(-1)^{|x||y|}as(y,x,z)$, or equivalently \begin{eqnarray}\label{egali-hom-left}
(x \circ y)\circ z-x \circ(y \circ z)&=&(-1)^{|x||y|}\Big((y \circ x)\circ z-y\circ(x \circ z)\Big).
 \end{eqnarray}
The identity (\ref{egali-hom-left}) is called pre-Lie super-identity.
\end{df}
\begin{df}\label{defi-oper-pre}\
Let $(\mathcal{A},\circ)$ be a pre-Lie superalgebra. \begin{enumerate}
\item  Let $V$ be a $\mathbb{Z}_2$-graded vector space and $l,r:\mathcal{A}\longrightarrow End(V)$ be two even linear maps. The triple $(V,l,r)$ is said to be an $\mathcal{A}$-bimodule of $(\mathcal{A},\circ)$ if,  for   $x,y \in \mathcal{H}(\mathcal{A})$ and $v\in \mathcal{H}(V),$
 \begin{eqnarray}\label{1-rep-pre}
 & l(x) l(y)v-l(x\circ y)v =(-1)^{|x||y|}\Big(l(y) l(x)v-l(y\circ x) v \Big),\\ &
 l(x) r(y)v-r(y) l(x)v =(-1)^{|x||v|}( r(x\circ y) v-r(y) r(x)v).
 \end{eqnarray}
 Moreover, the quadruple $(V,\circ_V,l,r)$ is said to be an  $\mathcal{A}$-bimodule $\K$-superalgebra if $(V,l,r)$  is an  $\mathcal{A}$-bimodule compatible with the multiplication $\circ_V$ on $V$, that is, for $x,y \in \mathcal{H}(\mathcal{A})$ and $v,w\in \mathcal{H}(V),$ 
\begin{eqnarray*}&  l(x)(v\circ_V w)-l(x)(v)\circ_V w= (-1)^{|x||v|} (v\circ_V l(x)(w))-r(x)(v)\circ_V w),\\ & ~~ r(x)(v\circ_V w)- v\circ_V r(x)(w)=  (-1)^{|v||w|}(r(x)(w\circ_V v)- w\circ_V r(x)(v)).
\end{eqnarray*}
\item Let  $(V,\circ_V,l,r)$ be an $\mathcal{A}$-bimodule $\K$-superalgebra. An even linear map
$T:V\longrightarrow \mathcal{A}$ is called a super $\mathcal{O}$-operator of weight $\lambda \in \mathbb{K}$ associated to $(V,\circ_V,l,r)$ if it  satisfies:
\begin{equation}\label{opera2}
   T(u)\circ T(v)= T\Big(l(T(u))v+(-1)^{|u||v|}r(T(v))u +\lambda u\circ_V v\Big),~~\forall~~~u,v\in \mathcal{H}(V).
\end{equation}
\end{enumerate}
 In particular, a super $\mathcal{O}$-operator of weight $\lambda \in \mathbb{K}$ associated to the  $\mathcal{A}$-bimodule $(\mathcal{A},L_\circ,R_\circ)$ is called a Rota-Baxter operator of weight $\lambda$ on $(\mathcal{A},\circ)$, that is, $R$ satisfies
\begin{equation}\label{Rota-baxter-Lie}
    R(x)\circ R(y)=R\Big(R(x)\circ y+x\circ R(y)+\lambda x\circ y\Big)
\end{equation}
for all $x,y,z$ in $\mathcal{H}(\mathcal{A})$.
\end{df}
\begin{prop}
Let $(\mathcal{A},\circ)$ be a pre-Lie superalgebra.
\begin{enumerate}
\item The commutator
\begin{eqnarray*}
    [x,y]&=& x\circ y-(-1)^{|x||y|} y\circ x
\end{eqnarray*}
defines a Lie superalgebra $(\mathcal{A},[~~,~~])$ which is called the sub-adjacent Lie superalgebra of $\mathcal{A}$ and $\mathcal{A}$ is also called a compatible pre-Lie superalgebra structure on the Lie superalgebra.
\item The map  $L_\circ$ gives a representation of the Lie superalgebra $(\mathcal{A},[~~,~~])$, that is,
$$L_\circ([x,y])=L_\circ(x) L_\circ(y)-(-1)^{|x||y|}L_\circ(y) L_\circ(x).$$
\end{enumerate}
\end{prop}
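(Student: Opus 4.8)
The plan is to derive both statements from the pre-Lie super-identity \eqref{egali-hom-left}, which says that the associator $as(x,y,z)=(x\circ y)\circ z-x\circ(y\circ z)$ is super-symmetric in its first two entries. Throughout I write $\epsilon(x,y)=(-1)^{|x||y|}$, and I will repeatedly invoke the two elementary sign rules $\epsilon(x,y+z)=\epsilon(x,y)\epsilon(x,z)$ and $\epsilon(x,y)\epsilon(y,x)=1$. I would prove (2) first, since it is essentially a reformulation of \eqref{egali-hom-left}, and then use the same identity to obtain the super-Jacobi identity in (1); the super skew-symmetry needed in (1) carries no content.

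For part (2), I would evaluate the claimed operator identity on an arbitrary homogeneous $w\in\mathcal{H}(\mathcal{A})$. Its left-hand side is $L_\circ([x,y])(w)=[x,y]\circ w=(x\circ y)\circ w-\epsilon(x,y)(y\circ x)\circ w$, while its right-hand side is $x\circ(y\circ w)-\epsilon(x,y)\,y\circ(x\circ w)$. The difference of the two sides is exactly $as(x,y,w)-\epsilon(x,y)\,as(y,x,w)$, which vanishes by \eqref{egali-hom-left} with the third slot renamed $w$. Hence $L_\circ$ is a representation of $(\mathcal{A},[\,,\,])$, and no sign manipulation beyond the defining Koszul factor is required.

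For part (1), super skew-symmetry $[x,y]=-\epsilon(x,y)[y,x]$ follows immediately by substituting the definition of the commutator and cancelling. The substance is the super-Jacobi identity \eqref{H-sJ}, which I would establish by direct expansion. Writing each of $[x,[y,z]]$, $[[x,y],z]$ and $\epsilon(x,y)[y,[x,z]]$ in terms of $\circ$ produces twelve monomials, six of \emph{left-associated} type $(\cdot\circ\cdot)\circ\cdot$ and six of \emph{right-nested} type $\cdot\circ(\cdot\circ\cdot)$. The six left-associated monomials occur in three pairs, each pair sharing a common right factor while having its first two factors interchanged; to each such pair I would apply \eqref{egali-hom-left} to rewrite the combination $(a\circ b)\circ c-\epsilon(a,b)(b\circ a)\circ c$ as $a\circ(b\circ c)-\epsilon(a,b)\,b\circ(a\circ c)$, thereby converting all twelve monomials to the right-nested type. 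A final regrouping by monomial, namely $x\circ(y\circ z)$, $x\circ(z\circ y)$, $y\circ(x\circ z)$, $y\circ(z\circ x)$, $z\circ(x\circ y)$ and $z\circ(y\circ x)$, then shows that the coefficients in each of the six groups cancel in pairs.

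The step I expect to be the only genuine obstacle is the bookkeeping of Koszul signs in this last regrouping: each use of \eqref{egali-hom-left} and each reordering of $x,y,z$ introduces a factor $\epsilon(\cdot,\cdot)$, and checking that, for instance, the coefficients of $y\circ(z\circ x)$ coming from $[x,[y,z]]$ and from $[y,[x,z]]$ add to zero reduces precisely to $\epsilon(x,y+z)=\epsilon(x,y)\epsilon(x,z)$, while the $z\circ(y\circ x)$ group reduces to $\epsilon(x+y,z)\epsilon(x,y)=\epsilon(x,y+z)\epsilon(y,z)$. No input beyond \eqref{egali-hom-left}, the definition of $[\,,\,]$, and these two sign rules is needed, so once the signs are matched the proof closes.
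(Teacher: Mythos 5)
Your argument is correct and complete. There is nothing in the paper to compare it against: the proposition is stated without proof (it is the standard fact that a pre-Lie superalgebra is Lie-admissible, quoted from the left-symmetric superalgebra literature), so your verification fills a gap rather than duplicating one. Your treatment of part (2) is exactly right, and your plan for the super-Jacobi identity works as described; an equivalent but slightly lighter bookkeeping is to group the twelve monomials directly into the three combinations $as(x,y,z)-(-1)^{|x||y|}as(y,x,z)$, $(-1)^{|y||z|}\bigl(as(x,z,y)-(-1)^{|x||z|}as(z,x,y)\bigr)$ and $-(-1)^{|x|(|y|+|z|)}\bigl(as(y,z,x)-(-1)^{|y||z|}as(z,y,x)\bigr)$, each of which vanishes by \eqref{egali-hom-left}; this avoids the intermediate conversion to right-nested monomials, though the sign identities you isolate are exactly the ones that make either version close.
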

\begin{cor}Let $(\mathcal{A},\circ)$ be a pre-Lie superalgebra and $(V,l,r)$ be an  $\mathcal{A}$-bimodule. Let $(\mathcal{A},[~~,~~])$ be the
subadjacent  Lie superalgebra. If $T$ is a super $\mathcal{O}$-operator associated to $(V,l,r)$, then $T$ is a super $\mathcal{O}$-operator of $(\mathcal{A},[~~,~~])$ associated to $(V,l-r,r-l)$.
\end{cor}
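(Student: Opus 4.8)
The plan is to check, by direct substitution, the defining identity of a weight-zero super $\mathcal{O}$-operator for the sub-adjacent Lie superalgebra $(\mathcal{A},[~,~])$, after first observing that the data $(V,l-r,r-l)$ really is a representation, so that the statement is well-posed. Throughout I set $\rho=l-r$ and use that $T$ is even, so that $|T(u)|=|u|$ for homogeneous $u\in\mathcal{H}(V)$; since only the bimodule $(V,l,r)$ (and not a multiplication $\circ_V$) is involved, the relevant weight is $\lambda=0$.

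First I would confirm that $(V,\rho)$ is a representation of $(\mathcal{A},[~,~])$. This is the bimodule-level companion of part $(2)$ of the preceding Proposition: rearranging the first of the bimodule axioms \eqref{1-rep-pre} gives $l([x,y])=l(x)l(y)-(-1)^{|x||y|}l(y)l(x)$, and feeding in the second axiom (the one controlling $r$) upgrades this to $\rho([x,y])=\rho(x)\rho(y)-(-1)^{|x||y|}\rho(y)\rho(x)$ for all $x,y\in\mathcal{H}(\mathcal{A})$. A cleaner, conceptual route to the same conclusion is to note that the axioms \eqref{1-rep-pre} are exactly what makes the semidirect sum $\mathcal{A}\ltimes V$ a pre-Lie superalgebra; applying the Proposition to $\mathcal{A}\ltimes V$ identifies its sub-adjacent Lie superalgebra with the semidirect sum of $(\mathcal{A},[~,~])$ by the action $x\cdot v=x\circ v-(-1)^{|x||v|}v\circ x=(l-r)(x)v$, i.e. by $\rho=l-r$. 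This is what legitimizes writing $(V,l-r,r-l)$ as a Lie-superalgebra representation (right action $-\rho=r-l$).

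The main step is then a one-line substitution. I would expand $[T(u),T(v)]=T(u)\circ T(v)-(-1)^{|u||v|}T(v)\circ T(u)$ and replace each product using the weight-zero pre-Lie $\mathcal{O}$-operator identity \eqref{opera2}, namely $T(u)\circ T(v)=T\big(l(T(u))v+(-1)^{|u||v|}r(T(v))u\big)$ together with the analogue obtained by interchanging $u$ and $v$. Pulling everything inside $T$ by linearity and using $(-1)^{|u||v|}(-1)^{|v||u|}=1$, the terms regroup as $l(T(u))v-r(T(u))v$ together with $-(-1)^{|u||v|}\big(l(T(v))u-r(T(v))u\big)$, so that $[T(u),T(v)]=T\big(\rho(T(u))v-(-1)^{|u||v|}\rho(T(v))u\big)$. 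This is precisely the weight-zero super $\mathcal{O}$-operator relation for $(\mathcal{A},[~,~])$ associated to $(V,l-r,r-l)$, which is the desired conclusion.

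I do not expect a genuine obstacle here: the $\mathcal{O}$-operator computation is pure algebra in $T$ and uses neither the bimodule axioms nor the pre-Lie identity, only linearity and evenness of $T$. The sole points requiring care are the Koszul signs, in particular verifying that the two factors of $(-1)^{|u||v|}$ coming from the bracket and from the swapped $\mathcal{O}$-operator relation multiply to $1$, so that the $r$-contributions align with the $l$-contributions to form $\rho=l-r$; and, on the structural side, making sure the representation property established in the first step uses the bimodule axioms \eqref{1-rep-pre} correctly.
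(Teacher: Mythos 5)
The paper states this corollary without any proof, so there is nothing to compare against line by line; your argument is the evident intended one and it is correct. The central computation — expanding $[T(u),T(v)]=T(u)\circ T(v)-(-1)^{|u||v|}T(v)\circ T(u)$, substituting the weight-zero identity \eqref{opera2} twice, and observing that the product of the two Koszul signs on the $r(T(u))v$ term is $+1$ so that the terms regroup as $(l-r)(T(u))v-(-1)^{|u||v|}(l-r)(T(v))u$ — is exactly right, and your preliminary check that $\rho=l-r$ is a representation of the sub-adjacent Lie superalgebra (needed for the statement to be well-posed, and following from the bimodule axioms \eqref{1-rep-pre}) is a worthwhile addition that the paper leaves implicit.
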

\begin{thm}Let $\mathcal{A}_{1}=(\mathcal{A},\circ,R)$ be  a Rota-Baxter pre-Lie superalgebra of weight zero. Then $\mathcal{A}_{2}=(\mathcal{A},\ast,R)$ is a Rota-Baxter pre-Lie superalgebra of weight zero, where the even binary operation $"\ast"$ is defined by
\begin{eqnarray*}
    x \ast y&=& R(x)\circ y-(-1)^{|x||y|}y \circ R(x).
\end{eqnarray*}
\end{thm}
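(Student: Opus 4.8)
The plan is to verify the two conditions packaged into the phrase ``Rota-Baxter pre-Lie superalgebra of weight zero'': first that the new product $\ast$ obeys the pre-Lie super-identity \eqref{egali-hom-left}, and second that $R$ satisfies the weight-zero Rota-Baxter relation for $\ast$, namely $R(x)\ast R(y)=R\big(R(x)\ast y+x\ast R(y)\big)$ for all $x,y\in\mathcal{H}(\mathcal{A})$. The key preliminary observation is that, since $R$ is even, $|R(x)|=|x|$, so the defining formula can be rewritten as
$$x\ast y=R(x)\circ y-(-1)^{|R(x)||y|}\,y\circ R(x)=[R(x),y],$$
where $[\cdot,\cdot]$ is the sub-adjacent Lie bracket of $(\mathcal{A},\circ)$ furnished by the Proposition on the sub-adjacent Lie superalgebra. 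This identification drives both halves of the argument.

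For the pre-Lie super-identity I would argue structurally rather than by brute force. Being a Rota-Baxter operator of weight zero on $(\mathcal{A},\circ)$ means $R$ is a super $\mathcal{O}$-operator associated to the regular bimodule $(\mathcal{A},L_\circ,R_\circ)$; by the Corollary above, $R$ is then a super $\mathcal{O}$-operator of the sub-adjacent Lie superalgebra associated to $(\mathcal{A},L_\circ-R_\circ,R_\circ-L_\circ)$. Since $L_\circ-R_\circ=ad$, this representation is precisely the adjoint one, so $R$ is in fact a Rota-Baxter operator of weight zero on the Lie superalgebra $(\mathcal{A},[\cdot,\cdot])$. Invoking the Section~1 result that a super $\mathcal{O}$-operator of weight zero on a Lie superalgebra gives rise to a pre-Lie superalgebra with product $u\cdot v=ad(R(u))(v)$, and noting that $ad(R(u))(v)=[R(u),v]=u\ast v$, we conclude that $(\mathcal{A},\ast)$ is pre-Lie. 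If one prefers, \eqref{egali-hom-left} for $\ast$ can instead be checked directly by expanding the associator and repeatedly applying the Rota-Baxter identity together with the pre-Lie identity for $\circ$; this is the laborious route that the structural argument avoids.

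For the Rota-Baxter relation the computation is short and purely formal. Expanding with the definition of $\ast$ gives $R(x)\ast R(y)=R(R(x))\circ R(y)-(-1)^{|x||y|}R(y)\circ R(R(x))$, while the right-hand side is $R$ applied to $R(R(x))\circ y-(-1)^{|x||y|}y\circ R(R(x))+R(x)\circ R(y)-(-1)^{|x||y|}R(y)\circ R(x)$. I now apply the weight-zero identity $R(a)\circ R(b)=R\big(R(a)\circ b+a\circ R(b)\big)$ for $\circ$ to the two products on the left: with $(a,b)=(R(x),y)$ one gets $R(R(x))\circ R(y)=R(R(R(x))\circ y)+R(R(x)\circ R(y))$, and with $(a,b)=(y,R(x))$ one gets $R(y)\circ R(R(x))=R(R(y)\circ R(x))+R(y\circ R(R(x)))$. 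Each product thus splits into exactly the two $R(\cdots)$ summands occurring on the right, and the two sides coincide termwise; no graded signs obstruct this step, since the Rota-Baxter identity for $\circ$ carries none and the factors $(-1)^{|x||y|}$ merely multiply matching summands. The only genuine difficulty in the whole proof is the pre-Lie super-identity, and routing it through the $\mathcal{O}$-operator machinery of Section~1 reduces it to results already in hand, leaving only the one-line Rota-Baxter verification above.
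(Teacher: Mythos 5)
Your proof is correct, but it reaches the pre-Lie super-identity for $\ast$ by a genuinely different route than the paper. The paper's proof is a direct computation: it expands $x\ast(y\ast z)$ and $(x\ast y)\ast z$, forms $as_{\mathcal{A}_{2}}(x,y,z)-(-1)^{|x||y|}as_{\mathcal{A}_{2}}(y,x,z)$, uses the weight-zero Rota-Baxter identity for $\circ$ to absorb all terms of the form $R(\cdots)\circ z$ and $z\circ R(\cdots)$, and recognizes what remains as a signed sum of three associator-symmetry expressions of $\mathcal{A}_{1}$, each vanishing by the pre-Lie identity for $\circ$. You instead note that $x\ast y=[R(x),y]$ for the sub-adjacent bracket, pass from the Rota-Baxter property of $R$ on $(\mathcal{A},\circ)$ to the Rota-Baxter property on the sub-adjacent Lie superalgebra via the corollary on super $\mathcal{O}$-operators (using $L_\circ-R_\circ=ad$, which is correct under the paper's convention $R_\circ(x)(y)=(-1)^{|x||y|}y\circ x$), and then invoke the Section~1 proposition, in the form of Remark \ref{rota-baxter=prelie}, that a weight-zero $\mathcal{O}$-operator on a Lie superalgebra yields the pre-Lie product $[R(x),y]$. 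This is shorter and reuses results already in hand, at the cost of leaning on the corollary, which the paper states without proof (its verification is, however, a two-line computation from the definitions). One further difference is in your favour: the theorem asserts that $(\mathcal{A},\ast,R)$ is a Rota-Baxter pre-Lie superalgebra, so one must also check that $R$ satisfies the weight-zero Rota-Baxter identity for $\ast$; the paper's proof stops after establishing the pre-Lie identity, whereas your final computation supplies this check correctly, the evenness of $R$ ensuring that the signs $(-1)^{|x||y|}$ match on both sides.
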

\begin{proof}Let $x,y$ and $z$ be a homogeneous elements in $\mathcal{A}$. Then we have
\begin{eqnarray*}
x\ast (y\ast z)&=& R(x)\circ(R(y)\circ z)-(-1)^{|x|(|y|+|z|)}(R(y)\circ z)\circ R(x)\\
&-&(-1)^{|y||z|}R(x)\circ(z\circ R(y))+(-1)^{|x|(|y|+|z|)}(z\circ R(y))\circ R(x),
\end{eqnarray*}
and
\begin{eqnarray*}
(x \ast y)\ast z&=& R (R(x)\circ y)\circ z-(-1)^{|z|(|x|+|y|)}z \circ R(R(x)\circ y)\\
&-&(-1)^{|x||y|}R(y \circ R(x))\circ z+(-1)^{|z|(|x|+|y|)+|x||y|}z\circ R(y \circ R(x)).
\end{eqnarray*}%
Subtracting the above terms, switching $x$ and $y$, and then subtracting the result yield:
\begin{small}
\begin{eqnarray*}
  && \ \    as_{\mathcal{A}_{2}}(x,y,z)-(-1)^{|x||y|}as_{\mathcal{A}_{2}}(y,x,z) \\
  && \ \ = (-1)^{|x||y|}\Big(R(y\circ R(x))\circ z+R(R(y)\circ x)\circ z \Big) -\Big(R(R(x)\circ y)\circ z+R(x\circ R(y))\circ z\Big)\\
  && \ \  + (-1)^{|z|(|x|+|y|)}\Big(z\circ R(R(x)\circ y)+z\circ R(x\circ R(y))\Big)
   - (-1)^{|z|(|x|+|y|)+|x||y|} \Big(z\circ R(R(x)\circ y)+z\circ R(R(y)\circ x) \Big) \\
  && \ \  +  R(x)\circ (R(y)\circ z)- (-1)^{|x|(|y|+|z|)}(R(y)\circ z)\circ R(x)- (-1)^{|y||z|}R(x)\circ (z \circ R(y))\\
&& \ \   + (-1)^{|x|(|y|+|z|)+|y||z|}(z \circ R(y))\circ R(x)
   - (-1)^{|x||y|}R(y)\circ (R(x)\circ z) - (-1)^{|y||z|}(R(x)\circ z)\circ R(y)\\
  && \ \  -(-1)^{|x|(|y|+|z|)}R(y)\circ (z\circ R(x))+(-1)^{|z|(|x|+|y|)} (z \circ R(x))\circ R(y)\\
  && \ \  = R(x)\circ (R(y)\circ z)-(R(x)\circ R(y))\circ z-(-1)^{|x||y|}R(y)\circ (R(x)\circ z)+(-1)^{|x||y|}(R(y)\circ R(x))\circ z\\
  && \ \  +(-1)^{|z|(|x|+|y|)}\Big(z\circ (R(x)\circ R(y))-(z\circ R(x))\circ R(y)\Big)
   -(-1)^{|y||z|}\Big( R(x)\circ(z\circ R(y))-(R(x)\circ z)\circ R(y)\Big)\\
  && \ \  + (-1)^{|x|(|y|+|z|)}\Big( R(y)\circ (z\circ R(x)) -(R(y)\circ z)\circ R(x)\Big)
  \\ && \ \ 
   -(-1)^{|z|(|x|+|y|)+|x||y|}\Big( z\circ (R(y)\circ R(x))-(z \circ R(y))\circ R(x)\Big)\\
  && \ \ \ = \Big(as_{\mathcal{A}_{1}}(x,y,z)-(-1)^{|x||y|}as_{\mathcal{A}_{1}}(y,x,z)\Big)
  +(-1)^{|z|(|x|+|y|)}\Big(as_{\mathcal{A}_{1}}(z,x,y)-(-1)^{|x||z|}as_{\mathcal{A}_{1}}(x,z,y) \Big)\\
  && \ \  +(-1)^{|x|(|y|+|z|)}\Big(as_{\mathcal{A}_{1}}(y,z,x)-(-1)^{|y||z|}as_{\mathcal{A}_{1}}(z,y,x) \Big)\\
   && \ \  =0.
  \end{eqnarray*}
  \end{small}
  Then $\mathcal{A}_{2}$ is a pre-Lie superalgebra, which ends the proof.
\end{proof}
Now, we construct pre-Lie superalgebras using super $\mathcal{O}$-operators on Lie superalgebras.
\begin{prop}Let $(\mathcal{A},[~~,~~])$ be a Lie superalgebra and $(V,\rho)$ be a representation of $\mathcal{A}$. Suppose that $T:V\longrightarrow \mathcal{A}$ is a super $\mathcal{O}$-operator of weight zero associated to $(V,\rho)$. Then, the  even bilinear map
$$u\circ v=\rho(T(u))v,~~\forall~~u,v\in \mathcal{H}(V)$$
defines a pre-Lie superalgebra structure on $\mathcal{A}$.
\end{prop}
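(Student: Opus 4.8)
The plan is to check the pre-Lie super-identity \eqref{egali-hom-left} directly for the product $u\circ v=\rho(T(u))v$ on $V$, using only two facts: the weight-zero defining relation of the super $\mathcal{O}$-operator, which reads $[T(u),T(v)]=T\big(\rho(T(u))v-(-1)^{|u||v|}\rho(T(v))u\big)$, and the representation identity \eqref{1-rep-Lie}. Note that $T$ is even, so $|T(u)|=|u|$ for homogeneous $u$, and that $\rho(T(u))v$ has parity $|u|+|v|$; these parities must agree with the Koszul signs appearing in both identities.

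First I would expand the associator. From $(u\circ v)\circ w=\rho\big(T(\rho(T(u))v)\big)w$ and $u\circ(v\circ w)=\rho(T(u))\rho(T(v))w$, the quantity $as(u,v,w)-(-1)^{|u||v|}as(v,u,w)$ decomposes into an \emph{outer} part, consisting of the two terms of the form $\rho\big(T(\cdots)\big)w$, and a \emph{nested} part, consisting of the two terms of the form $\rho(T(\cdot))\rho(T(\cdot))w$.

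Next I would treat the outer part. Collecting its two terms gives $\rho\big(T(\rho(T(u))v)-(-1)^{|u||v|}T(\rho(T(v))u)\big)w$; by linearity of $T$ the inner argument is $\rho(T(u))v-(-1)^{|u||v|}\rho(T(v))u$, so the weight-zero $\mathcal{O}$-operator relation collapses it to $\rho\big([T(u),T(v)]\big)w$. For the nested part, the two terms assemble into $-\big(\rho(T(u))\rho(T(v))-(-1)^{|u||v|}\rho(T(v))\rho(T(u))\big)w$, and since $|T(u)|\,|T(v)|=|u||v|$ the representation identity \eqref{1-rep-Lie} identifies this with $-\rho\big([T(u),T(v)]\big)w$. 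The two parts cancel, yielding $as(u,v,w)=(-1)^{|u||v|}as(v,u,w)$, which is precisely \eqref{egali-hom-left}.

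The computation is short and the only real care needed is in the sign bookkeeping, which I expect to be the main (though modest) obstacle: one must verify that the graded sign in the $\mathcal{O}$-operator relation and the graded commutator sign $(-1)^{|T(u)||T(v)|}$ in \eqref{1-rep-Lie} both reduce to $(-1)^{|u||v|}$, matching the sign in the pre-Lie identity. Because the weight is zero there is no bracket $[\,,\,]_V$ on $V$ to carry through, so no additional compatibility condition between $\rho$ and a product on $V$ enters, and the verification reduces exactly to the two-term cancellation above.
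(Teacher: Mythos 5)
Your proposal is correct and is essentially identical to the paper's own proof: both expand the associators, combine the two outer terms $\rho(T(\cdots))w$ via linearity of $T$ and the weight-zero $\mathcal{O}$-operator relation into $\rho([T(u),T(v)])w$, and cancel this against the nested terms using the representation identity. The sign bookkeeping you flag works out exactly as you predict since $T$ is even, so there is nothing missing.
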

\begin{proof}Let $u,v$ and $w$ be  in $\mathcal{H}(V)$. We have
\begin{eqnarray*}
 &  (u\circ v)\circ w-u\circ(v\circ w) = \rho(T(\rho(T(u))v))w-\rho(T(u))\rho(T(v))w,\\
 &  (-1)^{|u||v|}\Big((v\circ u)\circ w-v\circ(u\circ w)\Big) = (-1)^{|u||v|}\Big( \rho(T(\rho(T(v))u))w-\rho(T(v))\rho(T(u))w\Big).
\end{eqnarray*}
Hence \begin{eqnarray*}
      & & \ \ \ (u\circ v)\circ w-u\circ(v\circ w)-(-1)^{|u||v|}\Big((v\circ u)\circ w-v\circ(u\circ w)\Big)\\
      & & \ \ \  =\rho T\Big(\rho(T(u))v-(-1)^{|u||v|}\rho(T(v))u\Big)w-\rho(T(u))\rho(T(v))w+(-1)^{|u||v|}\rho(T(v))\rho(T(u))w\\
      & & \ \ \  =\rho([T(u),T(v)])-\rho(T(u))\rho(T(v))w+(-1)^{|u||v|}\rho(T(v))\rho(T(u))w\\
      & & \ \ \  =0.
      \end{eqnarray*}
      Therefore $(\mathcal{A},\circ)$ is a pre-Lie superalgebra.
\end{proof}
\begin{rem}\label{rota-baxter=prelie}Let $(\mathcal{A},[~~,~~])$ be a Lie superalgebra and $R$ be the super $\mathcal{O}$-operator $($of weight zero$)$ associated to the adjoint representation $(\mathcal{A},ad)$. Then the even binary operation given by
$x\circ y=[R(x),y]$, for all $x, y \in \mathcal{H}(\mathcal{A})$, 
defines a pre-Lie superalgebra structure on $\mathcal{A}$.
\end{rem}
\begin{exa}\label{osp1,2}In this example, we calculate  Rota-Baxter operators of weight zero on the Lie superalgebra $\mathfrak{osp(1,2)}$ and we give the corresponding pre-Lie superalgebras. Starting from the orthosymplectic Lie superalgebra, we consider in the sequal the matrix realization of this superalgebra.\\ Let $\mathfrak{osp(1,2)}=\mathcal{A}_0\oplus \mathcal{A}_1$ be the Lie superalgebra where $\mathcal{A}_0$ is spanned by
$$ e_1=\left(
     \begin{array}{ccc}
       1 & 0 & 0 \\
       0 & 0 & 0 \\
       0 & 0 & -1 \\
     \end{array}
   \right),~~
e_2=\left(
     \begin{array}{ccc}
       0 & 0 & 1 \\
       0 & 0 & 0 \\
       0 & 0 & 0 \\
     \end{array}
   \right),~~e_3=\left(
     \begin{array}{ccc}
       0 & 0 & 0 \\
       0 & 0 & 0 \\
       1 & 0 & 0 \\
     \end{array}
   \right),$$
and $\mathcal{A}_1$ is spanned by
$$ e_4=\left(
     \begin{array}{ccc}
       0 & 0 & 0 \\
       1 & 0 & 0 \\
       0 & 1 & 0 \\
     \end{array}
   \right),~~
e_5=\left(
     \begin{array}{ccc}
       0 & 1 & 0 \\
       0 & 0 & -1 \\
       0 & 0 & 0 \\
     \end{array}
   \right).$$
The defining relations $($we give only the ones with non-zero values in the right-hand side$)$ are
\begin{eqnarray*}
  && \ \ \ [e_1,e_2] = 2e_2,~~[e_1,e_3]=-2e_3,~~[e_2,e_3]=e_1,\\
  && \ \ \  [e_3,e_5] = e_4,\hskip0.3cm [e_2,e_4]=e_5,\hskip0.6cm [e_1,e_4]=-e_4,~~[e_1,e_5]=e_5, \\
  && \ \ \  [e_5,e_4]=e_1,\hskip0.3cm [e_5,e_5]=-2e_2,\hskip0.15cm[e_4,e_4]=2e_3.
\end{eqnarray*}
The Rota-Baxter operators of weight zero on the Lie superalgebra $\mathfrak{osp(1,2)}$ with respect to the homogeneous basis $\{e_1,e_2,e_3,e_4,e_5\}$ are:
\begin{itemize}
\item[] \hskip-1.3cm $R_{1}(e_1)=a_1 e_1+a_2e_2-\frac{8a_1^2a_3}{(2a_3+a_2)^2}e_3,\ R_{1}(e_2)=-\frac{2a_2a_1^2}{(2a_3+a_2)^2}e_1+\frac{(2a_3-3a_2)a_1}{2(2a_3+a_2)}e_2+\frac{2a_1^3}{(2a_3+a_2)^2}e_3,\ R_{1}(e_3)=a_3e_1+\frac{(2a_3+a_2)^2}{8a_1}e_2+\frac{a_1(a_2-6a_3)}{2(2a_3+a_2)}e_3,\ R_{1}(e_4)=0,\ R_{1}(e_5)=0,~~a_1\neq 0,~~a_2\neq -2a_3.$
\item[]\hskip-1.3cm  $R_{2}(e_1)=a_1e_1+a_2e_2,\ R_{2}(e_2)=-\frac{2a_1^2}{a_2}e_1-\frac{3a_1}{2}e_2+\frac{2a_1^3}{a_2^2}e_3,\ R_{2}(e_3)=\frac{a_2^2}{8a_1}e_2+\frac{a_1}{2}e_3,\ R_{2}(e_4)=0,\ R_{2}(e_5)=0,~~a_1\neq 0,~~a_2\neq 0.$
\item[]\hskip-1.3cm  $R_{3}(e_1)=a_1e_1-\frac{2a_1^2}{a_3}e_3,\ R_{3}(e_2)=\frac{a_1}{2}e_2+\frac{a_1^3}{2a_3^2}e_3,\ R_{3}(e_3)=a_3e_1+\frac{a_3^2}{2a_1}e_2-\frac{3a_1}{2}e_3,\ R_{3}(e_4)=0,\ R_{3}(e_5)=0,~~a_1\neq 0,~~a_3\neq 0.$
\item[]\hskip-1.3cm  $R_{4}(e_1)=0,\ R_{4}(e_2)=0,\ R_{4}(e_3)=a_3e_1+a_4e_2,\ R_{4}(e_4)=0,\ R_{4}(e_5)=0.$
\item[]\hskip-1.3cm  $R_{5}(e_1)=a_1e_1-4a_3e_2-\frac{2a_1^2}{a_3}e_3,\ R_{5}(e_2)=-\frac{a_1^2}{4a_3}e_1+a_1e_2+\frac{a_1^3}{2a_3^2}e_3,\ R_{5}(e_3)=a_3e_1-\frac{4a_3^2}{a_1}e_2-2a_1e_3,\ R_{5}(e_4)=0,\ R_{5}(e_5)=0,~~a_1\neq 0,~~a_3\neq 0.$
\item[]\hskip-1.3cm  $R_{6}(e_1)=0,\ R_{6}(e_2)=0,\ R_{6}(e_3)=a_3e_1,\ R_{6}(e_4)=0,\ R_{6}(e_5)=0.$
\item[]\hskip-1.3cm  $R_{7}(e_1)=a_1e_1+\frac{2a_1a_5}{a_6}e_2+a_6e_3,\ R_{7}(e_2)=\frac{a_6}{2}e_1+a_5e_2+\frac{a_6^2}{2a_1}e_3,\ R_{7}(e_3)=\frac{a_1a_5}{a_6}e_1+\frac{2a_1a_5^2}{a_6^2}e_2+a_5e_3,\ R_{7}(e_4)=0,\ R_{7}(e_5)=0,~~a_1\neq 0,~~a_6\neq 0.$
\item[]\hskip-1.3cm  $R_{8}(e_1)=a_1e_1+a_6e_3,\ R_{8}(e_2)=\frac{a_6}{2}e_1+\frac{a_6^2}{2a_1}e_3,\ R_{8}(e_3)=0,\ R_{8}(e_4)=0,\ R_{8}(e_5)=0,~~a_1\neq 0.$
\item[]\hskip-1.3cm  $R_{9}(e_1)=a_1e_1+a_2e_2,\ R_{9}(e_2)=0,\ R_{9}(e_3)=\frac{a_2}{2}e_1+\frac{a_2^2}{2a_1}e_2,\ R_{9}(e_4)=0,\ R_{9}(e_5)=0,~~a_1\neq 0.$
\item[]\hskip-1.3cm  $R_{10}(e_1)=0,\ R_{10}(e_2)=a_7e_1+a_8e_3,\ R_{10}(e_3)=0,\ R_{10}(e_4)=0,\ R_{10}(e_5)=0.$
\item[]\hskip-1.3cm $R_{11}(e_1)=0,\ R_{11}(e_2)=a_5e_2+a_8e_3,\ R_{11}(e_3)=\frac{a_5^2}{a_8}e_2+a_5e_3,\ R_{11}(e_4)=0,\ R_{11}(e_5)=0,~~a_8\neq  0.$
\item[]\hskip-1.3cm  $R_{12}(e_1)=a_6e_3,\ R_{12}(e_2)=-\frac{a_6}{2}e_1+a_8e_3,\ R_{12}(e_3)=0,\ R_{12}(e_4)=0,\ R_{12}(e_5)=0.$
\item[]\hskip-1.3cm  $R_{13}(e_1)=0,\ R_{13}(e_2)=a_7e_1,\ R_{13}(e_3)=0,\ R_{13}(e_4)=0,\ R_{13}(e_5)=0.$
\item[]\hskip-1.3cm  $R_{14}(e_1)=0,\ R_{14}(e_2)=0,\ R_{14}(e_3)=a_4e_2,\ R_{14}(e_4)=0,\ R_{14}(e_5)=0.$
\item[]\hskip-1.3cm  $R_{15}(e_1)=0,\ R_{15}(e_2)=0,\ R_{15}(e_3)=0,\ R_{15}(e_4)=0,\ R_{15}(e_5)=0.$
\item[]\hskip-1.3cm  $R_{16}(e_1)=a_1 e_1+a_2e_2+\frac{a_1^2(a_2-4a_3)}{4a_3^2}e_3,\ R_{16}(e_2)=-\frac{a_1^2}{4a_3}e_1-\frac{a_1a_2}{4a_3}e_2+\frac{a_1^3(4a_3-a_2)}{16a_3^3}e_3,\ R_{16}(e_3)=a_3e_1+\frac{a_2a_3}{a_1}e_2+\frac{a_1(a_2-4a_3)}{4a_3}e_3,\ R_{16}(e_4)=0,\ R_{16}(e_5)=0,~~a_1\neq 0,~~a_3\neq 0.$
\item[]\hskip-1.3cm  $R_{17}(e_1)=a_1e_1-\frac{a_1^2}{a_3}e_3,\ R_{17}(e_2)=-\frac{a_1^2}{4a_3}e_1+\frac{a_1^3}{4a_3^2}e_3,\ R_{17}(e_3)=a_3e_1-a_1e_3,\ R_{17}(e_4)=0,\ R_{17}(e_5)=0,~~a_3\neq 0.$
\item[]\hskip-1.3cm  $R_{18}(e_1)=a_1e_1-4a_3e_2-\frac{2a_1^2}{a_3}e_3,\ R_{18}(e_2)=\frac{2a_1^2}{a_3}e_1-\frac{7a_1}{2}e_2+\frac{a_1^3}{2a_3^2}e_3,\ R_{18}(e_3)=a_3e_1+\frac{a_3^2}{2a_1}e_2+\frac{5a_1}{2}e_3,\ R_{18}(e_4)=0,\ R_{18}(e_5)=0,~~a_1\neq 0,~~a_3\neq 0.$
\item[]\hskip-1.3cm  $R_{19}(e_1)=a_1e_1+4a_3e_2,\ R_{4}(19)=-\frac{a_1^2}{4a_3}e_1-a_1e_2,\ R_{19}(e_3)=a_3e_1+\frac{4a_3^2}{a_1}e_2,\ R_{19}(e_4)=0,\ R_{19}(e_5)=0,~~a_1\neq 0,~~a_3\neq 0$
\item[]\hskip-1.3cm  $R_{20}(e_1)=0,\ R_{20}(e_2)=0,\ R_{20}(e_3)=a_3e_1+a_4e_2,\ R_{20}(e_4)=0,\ R_{20}(e_5)=0.$
\item[]\hskip-1.3cm  $R_{21}(e_1)=-2a_3e_2,\ R_{21}(e_2)=0,\ R_{21}(e_3)=a_3e_1+a_4e_2,\ R_{21}(e_4)=0,\ R_{21}(e_5)=0.$
\item[]\hskip-1.3cm  $R_{22}(e_1)=\frac{4a_5^2}{a_6}e_2+a_6e_3,\ R_{22}(e_2)=-a_5e_2-\frac{a_6^2}{4a_5}e_3,\ R_{22}(e_3)=\frac{4a_5^3}{a_6^2}e_2+a_5e_3,\ R_{23}(e_4)=0,\ R_{23}(e_5)=0,~~a_5\neq 0,~~a_6\neq 0.$
\item[]\hskip-1.3cm  $R_{23}(e_1)=a_2e_1,\ R_{23}(e_2)=0,\ R_{23}(e_3)=-\frac{a_2}{2}e_2+a_4e_2,\ R_{23}(e_4)=0,\ R_{23}(e_5)=0.$
\item[]\hskip-1.3cm  $R_{24}(e_1)=0,\ R_{24}(e_2)=a_6e_1+a_7e_3,\ R_{24}(e_3)=0,\ R_{24}(e_4)=0,\ R_{24}(e_5)=0.$
\item[]\hskip-1.3cm  $R_{25}(e_1)=0\ R_{25}(e_2)=0,\ R_{25}(e_3)=a_4e_2,\ R_{25}(e_4)=0,\ R_{25}(e_5)=0.$
\item[]\hskip-1.3cm  $R_{26}(e_1)=0,\ R_{26}(e_2)=a_5e_2+\frac{a_5^2}{a_4}e_3,\ R_{26}(e_3)=a_4e_2+a_5e_3,\ R_{26}(e_4)=0,\ R_{26}(e_5)=0,~~a_4\neq 0.$
\item[]\hskip-1.3cm  $R_{27}(e_1)=a_1e_1,\ R_{27}(e_2)=0,\ R_{27}(e_3)=0,\ R_{27}(e_4)=0,\ R_{27}(e_5)=0.$
\item[]\hskip-1.3cm  $R_{28}(e_1)=0,\ R_{28}(e_2)=a_7e_3,\ R_{28}(e_3)=0,\ R_{28}(e_4)=0,\ R_{28}(e_5)=0.$
\item[]\hskip-1.3cm  $R_{29}(e_1)=a_9e_1-\frac{2a_9^2}{a_{10}}e_3,\ R_{29}(e_2)=\frac{a_9}{2}e_2+\frac{a_9^3}{2a_{10}^2}e_3,\ R_{29}(e_3)=a_{10}e_1+\frac{a_5^2}{2a_9}e_2-\frac{3a_9}{2}e_3,\ R_{29}(e_4)=-a_9e_4+a_{10}e_5,\ R_{29}(e_5)=-\frac{a_9^2}{a_{10}}e_4+a_9e_5,~~a_9\neq 0,~~a_{10}\neq 0.$
\item[]\hskip-1.3cm  $R_{30}(e_1)=-a_{10}e_2,\ R_{30}(e_2)=0,\ R_{30}(e_3)=\frac{a_{10}}{2}e_1+a_4e_2,\ R_{30}(e_4)=a_{10}e_5,\ R_{30}(e_5)=0.$
\item[]\hskip-1.3cm  $R_{31}(e_1)=a_{11}e_3,\ R_{31}(e_2)=-\frac{a_{11}}{2}e_1+a_8e_3,\ R_{31}(e_3)=0,\ R_{31}(e_4)=0,\ R_{31}(e_5)=a_{11}e_4.$
\end{itemize}
The constants  $a_i$ are parameters.
\end{exa}
\begin{df}Let $(\mathcal{A},[~~,~~],R)$ be a Rota-Baxter Lie superalgebra  of weight zero. A Rota-Baxter operator on an $\mathcal{A}$-module $V$ $($relative to $R)$ is a map $R_V:V\longrightarrow V$ such that, for all $x \in \mathcal{H}(\mathcal{A})$ and $v \in \mathcal{H}(V)$,
\begin{eqnarray*}
    & &  [R(x),R_V(v)] = R_V \Big([R(x),v]+[x,R_V(v)]\Big),\\
    & & [R_V(v),R(x)] = R_V \Big([R_V(v),x]+[v,R(x)]\Big),
    \end{eqnarray*}
    where the action $\rho(x)(v)$ is denoted by $[x,v]$.
\end{df}
\begin{prop}Let $(\mathcal{A},[~~,~~],R)$ be a Rota-Baxter Lie superalgebra of weight zero,
$V$ an $\mathcal{A}$-module and $R_V$ a Rota-Baxter operator on $V$. Define new actions of $\mathcal{A}$ on $V$ by
$$ x\circ v=[R(x),v],~~~~v\circ x=[R_V(v),x].$$
Equipped with these actions, $V$ is a bimodule over the pre-Lie superalgebra (Remark \ref{rota-baxter=prelie}).
\end{prop}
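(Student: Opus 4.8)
The plan is to unwind what it means for $V$ to be a bimodule over the pre-Lie superalgebra $(\mathcal{A},\circ)$ of Remark~\ref{rota-baxter=prelie}, in which $x\circ y=[R(x),y]$, and then to check directly the two defining identities of an $\mathcal{A}$-bimodule in Definition~\ref{defi-oper-pre}. First I would record the two actions in terms of the representation $\rho$: the left action is $l(x)(v)=x\circ v=\rho(R(x))(v)$, while super skew-symmetry of the bracket rewrites the right action as $r(x)(v)=v\circ x=[R_V(v),x]=-(-1)^{|x||v|}\rho(x)(R_V(v))$. I would also observe that the two conditions defining $R_V$ collapse to a single relation: applying super skew-symmetry to the second one reproduces the first, so being a Rota-Baxter operator on $V$ amounts to the single identity $(\star)\colon \rho(R(x))R_V(v)=R_V\big(\rho(R(x))v+\rho(x)R_V(v)\big)$.

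For the first identity I would rephrase it as the statement that $l$ is a representation of the sub-adjacent Lie superalgebra, that is $l(\{x,y\})=l(x)l(y)-(-1)^{|x||y|}l(y)l(x)$ with $\{x,y\}=x\circ y-(-1)^{|x||y|}y\circ x$. Because $R$ is a Rota-Baxter operator of weight zero on $(\mathcal{A},[~,~])$ one has $R(\{x,y\})=[R(x),R(y)]$, whence $l(\{x,y\})=\rho([R(x),R(y)])$; expanding the right-hand side by the representation identity \eqref{1-rep-Lie} and using that $R$ is even gives exactly $\rho(R(x))\rho(R(y))-(-1)^{|x||y|}\rho(R(y))\rho(R(x))=l(x)l(y)-(-1)^{|x||y|}l(y)l(x)$. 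This settles the first identity using only that $R$ preserves parity, the weight-zero Rota-Baxter relation, and the fact that $\rho$ is a representation; it does not involve $R_V$ at all.

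The real work is the second identity. I would expand both $l(x)r(y)v-r(y)l(x)v$ and $r(x\circ y)v-r(y)r(x)v$ using the formulas for $l$ and $r$ above. On the left the composite $r(y)l(x)v$ produces the term $R_V(\rho(R(x))v)$, which I would remove by inserting $(\star)$ in the form $R_V(\rho(R(x))v)=\rho(R(x))R_V(v)-R_V(\rho(x)R_V(v))$; on the right the term $r(x\circ y)v$ forces an expansion of $\rho([R(x),y])$ by \eqref{1-rep-Lie}. After these substitutions each side becomes a combination of the three expressions $\rho(R(x))\rho(y)R_V(v)$, $\rho(y)\rho(R(x))R_V(v)$ and $\rho(y)R_V(\rho(x)R_V(v))$, and a term-by-term comparison shows the left-hand side equals $(-1)^{|x||v|}$ times the right-hand side, which is precisely the required identity. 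I expect the only genuine obstacle to be the Koszul sign bookkeeping: one must carry the parities $|R(x)|=|x|$ and $|R_V(v)|=|v|$ through each composite and check that all three matched pairs of terms pick up the same correcting factor $(-1)^{|x||v|}$, so the computation, though routine, has to be done with some care.
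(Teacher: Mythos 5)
Your proposal is correct and follows essentially the same route as the paper: both verify the two bimodule axioms of Definition \ref{defi-oper-pre} directly, the key steps being the substitution of the Rota-Baxter identity for $R_V$ (converting $R_V(\rho(R(x))v)+R_V(\rho(x)R_V(v))$ into $\rho(R(x))R_V(v)$) followed by the representation identity \eqref{1-rep-Lie} applied to $\rho([R(x),y])$, exactly as in the paper's displayed computation. The only cosmetic difference is that you dispatch the first axiom cleanly via $R(\{x,y\})=[R(x),R(y)]$ and $l=\rho\circ R$, where the paper merely writes ``similarly''.
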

\begin{proof}Let $x,y$ be a homogeneous elements in $\mathcal{A}$ and $v$ in $V$.  We have
\begin{align*}
 &   l(x)r(y)(v)-(-1)^{|x||y|}r(y)l(x)-r(x\circ y)(v) +(-1)^{|x||y|}r(y)r(x)(v)\\
 &  = (-1)^{|y||v|}[R(x),[R_V(v),y]]-(-1)^{|y||v|}[R_V([R(x),v]),y]\\
 &   -(-1)^{|v|(|x|+|y|)}[R_V(v),[R(x),y]]+(-1)^{|v|(|x|+|y|)}[R_V([R_V(v),x]),y]\\
 &  = (-1)^{|y||v|}[R(x),[R_V(v),y]]-(-1)^{|y||v|}\Big([R_V([R(x),v]+[x,R_V(v)]),y]\Big)
  -(-1)^{|v|(|x|+|y|)}[R_V(v),[R(x),y]]\\
 &   =(-1)^{|y||v|}[R(x),[R_V(v),y]]-(-1)^{|y||v|}[[R(x),R_V(v)],y]-(-1)^{|v|(|x|+|y|)}[R_V(v),[R(x),y]]\\
 &  =0.
\end{align*}
Then $$l(x)r(y)(v)-(-1)^{|x||y|}r(y)l(x)= r(x\circ y)(v) -(-1)^{|x||y|}r(y)r(x)(v).$$
Similarly, we show that $l(x)\circ l(y)v-l(x\circ y)v =(-1)^{|x||y|}\Big(l(y)\circ l(x)v-l(y\circ x) v\Big).$
\end{proof}
Now, we construct a functor from a full subcategory of the category of Rota-Baxter
Lie-admissible (or associative) superalgebras to the category of pre-Lie superalgebras. The Lie-admissible algebras were studied by A. A. Albert in $1948$ and  M. Goze and E. Remm, in $2004$,  introduced the notion of $G$-associative algebras where $G$ is a subgroup of the permutation group $S_3$ $($see \cite{Goze}$)$. The graded case was studied by F. Ammar and A. Makhlouf in $2010$,  $($see \cite{Ammar-Makhlouf} for more details$)$.
\begin{df}\ \begin{enumerate}
\item A Lie-admissible superalgebra is a superalgebra $(\mathcal{A},\mu)$ in which the super-commutator
bracket, defined for all homogeneous $x,y$ in $\mathcal{A}$ by
$$[x,y] = \mu(x,y)-(-1)^{|x||y|}\mu(y,x),$$
satisfies the super-Jacobi identity (\ref{H-sJ}).
\item Let $G$ be a subgroup of the permutation group $\mathcal{S}_3$. A Rota-Baxter $G$-associative superalgebra of weight $\lambda \in \mathbb{K}$ is a $G$-associative
superalgebra $(\mathcal{A},\cdot)$ together with an even linear self-map $R: \mathcal{A}\longrightarrow \mathcal{A}$ that satisfies the identity
\begin{equation}\label{iden-Rota}
    R(x) \cdot R(y) = R(R(x)\cdot y + x \cdot R(y) -\lambda x \cdot y),
\end{equation}
for all homogeneous elements $x,y,z$ in $\mathcal{A}$.
\end{enumerate}
\end{df}
\begin{thm}
Let $(\mathcal{A},\cdot,R)$ be a Rota-Baxter Lie-admissible superalgebra of weight zero. Define the even binary operation $"\ast"$ on  any homogeneous element $x,y\in \mathcal{A}$ by
\begin{equation}\label{Rota-baxter hom}
x \ast y = R(x)\cdot y -(-1)^{|x||y|} y \cdot R(x)= [R(x), y].
\end{equation}
Then $ \mathcal{A}_{L}= (\mathcal{A},\ast)$ is a pre-Lie superalgebra.
\end{thm}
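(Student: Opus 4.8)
The plan is to reduce the statement to the already-established construction of pre-Lie superalgebras from Rota-Baxter operators on Lie superalgebras, namely Remark \ref{rota-baxter=prelie}, by showing that the hypothesis factors through the sub-adjacent Lie superalgebra. First I would form the super-commutator $[x,y]=x\cdot y-(-1)^{|x||y|}y\cdot x$; since $(\mathcal{A},\cdot)$ is Lie-admissible by assumption, $(\mathcal{A},[~,~])$ is a genuine Lie superalgebra. The key preliminary remark is that the operation $\ast$ is nothing but $x\ast y=[R(x),y]$: because $R$ is even we have $|R(x)|=|x|$, so $R(x)\cdot y-(-1)^{|x||y|}y\cdot R(x)=R(x)\cdot y-(-1)^{|R(x)||y|}y\cdot R(x)=[R(x),y]$, which is exactly the form of the product appearing in Remark \ref{rota-baxter=prelie}.

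The central step is to verify that $R$ is a Rota-Baxter operator of weight zero on the Lie superalgebra $(\mathcal{A},[~,~])$, i.e. that $[R(x),R(y)]=R\big([R(x),y]-(-1)^{|x||y|}[R(y),x]\big)$. I would expand the left-hand side as $[R(x),R(y)]=R(x)\cdot R(y)-(-1)^{|x||y|}R(y)\cdot R(x)$ and apply the Rota-Baxter identity \eqref{iden-Rota} with $\lambda=0$ to each of the two products $R(x)\cdot R(y)$ and $R(y)\cdot R(x)$ separately. Using linearity of $R$, this collects into $R$ applied to $R(x)\cdot y+x\cdot R(y)-(-1)^{|x||y|}\big(R(y)\cdot x+y\cdot R(x)\big)$. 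I would then expand the argument of $R$ on the right-hand side directly from the definition of the bracket and check that it equals the same expression. It is worth emphasizing that this computation uses only the Rota-Baxter identity and bilinearity and never invokes associativity; Lie-admissibility is needed solely to ensure that $(\mathcal{A},[~,~])$ satisfies the super-Jacobi identity.

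Once $R$ is known to be a Rota-Baxter operator of weight zero on $(\mathcal{A},[~,~])$, it is by definition a super $\mathcal{O}$-operator of weight zero associated to the adjoint representation $(\mathcal{A},ad)$. Applying Remark \ref{rota-baxter=prelie}, equivalently the preceding Proposition on super $\mathcal{O}$-operators with $V=\mathcal{A}$, $\rho=ad$, and $u\circ v=ad(R(u))v=[R(u),v]$, then immediately yields that $x\ast y=[R(x),y]$ endows $\mathcal{A}$ with a pre-Lie superalgebra structure, which is the desired conclusion.

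I expect the only real difficulty to be bookkeeping: keeping the Koszul signs consistent while expanding both sides of the Lie Rota-Baxter identity in the central step. No genuinely hard point arises, since the pre-Lie super-identity for $\ast$ is not checked by hand but is inherited wholesale from Remark \ref{rota-baxter=prelie}; the entire content of the proof is the sign-level verification that the weight-zero Rota-Baxter identity for $\cdot$ forces the weight-zero Rota-Baxter identity for the commutator $[~,~]$.
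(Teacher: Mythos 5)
Your proposal is correct, but it takes a genuinely different route from the paper. The paper proves the theorem by a single direct computation: it expands $as_{\mathcal{A}_{L}}(x,y,z)-(-1)^{|x||y|}as_{\mathcal{A}_{L}}(y,x,z)$ in full, applies the Rota-Baxter identity \eqref{iden-Rota} with $\lambda=0$ to recombine terms of the form $R(R(x)\cdot y)\cdot z+R(x\cdot R(y))\cdot z$ into $(R(x)\cdot R(y))\cdot z$, and recognizes the result as the cyclic sum $\circlearrowleft_{R(x),R(y),z}(-1)^{|x||z|}[R(x),[R(y),z]]$, which vanishes by Lie-admissibility. You instead modularize: (i) the super-commutator of a Lie-admissible superalgebra is a Lie superalgebra; (ii) a weight-zero Rota-Baxter operator for $\cdot$ is a weight-zero Rota-Baxter operator for $[~,~]$, which is the short sign check you correctly identify as the only computation needed (indeed $[R(x),R(y)]=R\big(R(x)\cdot y+x\cdot R(y)-(-1)^{|x||y|}R(y)\cdot x-(-1)^{|x||y|}y\cdot R(x)\big)$ matches $R\big([R(x),y]-(-1)^{|x||y|}[R(y),x]\big)$ term by term, using only $|R(x)|=|x|$ and $(-1)^{2|x||y|}=1$); and (iii) the already-proved Proposition on super $\mathcal{O}$-operators of Lie superalgebras, specialized to the adjoint representation as in Remark \ref{rota-baxter=prelie}, then delivers the pre-Lie super-identity for $x\ast y=[R(x),y]$. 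Your factorization buys brevity and conceptual clarity, and it makes transparent exactly where each hypothesis is used (the Rota-Baxter identity in step (ii), Lie-admissibility in steps (i) and (iii) via the super-Jacobi identity needed for $ad$ to be a representation); the paper's direct expansion is self-contained and does not lean on the earlier Proposition, at the cost of a long sign-heavy calculation. The two arguments are ultimately the same algebra organized differently, and both are valid.
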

\begin{proof}For all homogeneous elements $x,y$ and $z$ in $\mathcal{A}$ we have
\begin{eqnarray*}
  & x\ast(y\ast z) = R(x)\cdot(R(y)\cdot z)-(-1)^{|y||z|}R(x)\cdot(z\cdot R(y))- (-1)^{|x|(|y|+|z|)}(R(y)\cdot z)\cdot R(x) \\
   &+(-1)^{|x|(|y|+|z|)+|y||z|}(z\cdot R(y))\cdot R(x),
  \end{eqnarray*}
and
\begin{eqnarray*}
&  (x\ast y)\ast z = R(R(x)\cdot y)\cdot z-(-1)^{|x||y|}R(y\cdot R(x))\cdot z- (-1)^{|z|(|x|+|y|)}(R(y)z\cdot R(R(x)\cdot y) \\
   &+(-1)^{|z|(|x|+|y|)+|x||y|}z\cdot R(y\cdot R(x)).
  \end{eqnarray*}
Subtracting the above terms, switching $x$ and $y$, and then subtracting the result yield:
\begin{eqnarray*}
  && as_{\mathcal{A}_{L}}(x,y,z)-(-1)^{|x||y|}as_{\mathcal{A}_{L}}(y,x,z) \\
   &=&x\ast(y \ast z)-(x\ast y)\ast z-(-1)^{|x||y|}y\ast x \ast z)+(-1)^{|x||y|}(y\ast x)\ast z  \\
  &=& \underbrace{R(x)\cdot(R(y)\cdot z)-(-1)^{|y||z|}R(x)\cdot(z\cdot R(y))}\\
   &-& \underbrace{(-1)^{|x|(|y|+|z|)}(R(y)\cdot z)\cdot R(x)
   +(-1)^{|x|(|y|+|z|)+|y||z|}(z\cdot R(y))\cdot R(x)}\\
   &-& R(R(x)\cdot y)\cdot z+(-1)^{|x||y|}R(y\cdot R(x))\cdot z \\
   &+& (-1)^{|z|(|x|+|y|)}z\cdot R(R(x)\cdot y)
   -(-1)^{|z|(|x|+|y|)+|x||y|}z\cdot R(y \cdot R(x))\\
   &-&\underbrace{(-1)^{|x||y|}R(y)\cdot(R(x)\cdot z)+(-1)^{|x|(|y|+|z|)}R(y)\cdot(z\cdot R(x))}\\
   &+&\underbrace{(-1)^{|y||z|}(R(x)\cdot z)\cdot R(y)
   -(-1)^{|z|(|x|+|y|)}(z\cdot R(x))\cdot R(y)}\\
   &+&(-1)^{|x||y|} R(R(y)\cdot x)\cdot z-R(x \cdot R(y))\cdot z \\
   &-& (-1)^{|z|(|x|+|y|)+|x||y|}z\cdot R(R(y)\cdot x)
   +(-1)^{|z|(|x|+|y|)}z\cdot R(x \cdot R(y)).
\end{eqnarray*}
Using the Rota-Baxter super-identity (\ref{iden-Rota}) with $\lambda=0$, we obtain
\begin{eqnarray*}
  && as_{\mathcal{A}_{L}}(x,y,z)-(-1)^{|x||y|}as_{\mathcal{A}_{L}}(y,x,z) \\
   &=&[R(x),[R(y),z]]+(-1)^{|x|(|y|+|z|)}[R(y),[z,R(x)]]-(R(x)\cdot R(y))\cdot z\\
   &+&(-1)^{|x||y|}z\cdot(R(x)\cdot R(y))-(-1)^{|z|(|x|+|y|)+|x||y|}z\cdot (R(y)\cdot R(x))\\
   &=&[R(x),[R(y),z]]+(-1)^{|x|(|y|+|z|)}[R(y),[z,R(x)]]+(-1)^{|z|(|x|+|y|)}[z,[R(x),R(y)]]\\
   &=&\circlearrowleft_{R(x),R(y),z} (-1)^{|x||z|}[R(x),[R(y),z]].
\end{eqnarray*}
Lie-admissibility now implies that
$$as_{\mathcal{A}_{L}}(x,y,z)-(-1)^{|x||y|}as_{\mathcal{A}_{L}}(y,x,z)=0,$$
showing that $\mathcal{A}_{L}$ is a pre-Lie superalgebra.
\end{proof}
\begin{thm}\label{RB+ass=preLie super} Let $(\mathcal{A},\mu,R)$ be a Rota-Baxter associative superalgebra of weight $\lambda=-1$. Define the even binary operation $"\circ"$ on any homogeneous element $x,y\in \mathcal{A}$ by
\begin{eqnarray}\label{ass+RB==pre-Lie}
x \circ y &=& \mu (R(x),y) -(-1)^{|x||y|} \mu(y,R(x))- \mu(x,y)\nonumber\\
          &=& R(x)y-(-1)^{|x||y|} y R(x)-x y.
\end{eqnarray}
Then $\mathcal{A}_{L}=(\mathcal{A},\circ)$ is a pre-Lie superalgebra.
\end{thm}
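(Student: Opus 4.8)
The plan is to verify directly that $\mathcal{A}_L$ satisfies the pre-Lie super-identity \eqref{egali-hom-left}, i.e.\ that the associator of $\circ$ is super-symmetric in its first two slots,
$$as_{\mathcal{A}_L}(x,y,z)=(-1)^{|x||y|}as_{\mathcal{A}_L}(y,x,z),\qquad as_{\mathcal{A}_L}(x,y,z)=(x\circ y)\circ z-x\circ(y\circ z),$$
following the same strategy as the proof of the preceding Lie-admissible theorem, but now carrying along the extra weight term $-\mu(x,y)$ built into the definition \eqref{ass+RB==pre-Lie}. First I would substitute \eqref{ass+RB==pre-Lie} into $(x\circ y)\circ z$ and into $x\circ(y\circ z)$ and expand everything into monomials in the associative product $\mu$ and the operator $R$; since $\circ$ has three summands, each associator produces on the order of a dozen monomials, and the nested factors that appear are precisely of the forms $R(R(x)y)$, $R(xR(y))$ and $R(xy)$.

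The key simplification step is to invoke \eqref{R-B assoc} with $\lambda=-1$, rewritten as $R(R(x)y)+R(xR(y))-R(xy)=R(x)R(y)$, to collapse the nested $R$-terms into genuine products $R(x)R(y)$. The crucial point is that these nested terms do \emph{not} simplify individually: they only collapse in the above combination, and it is exactly the pieces carrying a factor $R(xy)$ (and the bare $\mu$-pieces $\mu(x,y)$) coming from the $-\mu(x,y)$ summand of $\circ$ that complete each Rota-Baxter combination. After this substitution and after dropping all parentheses by associativity of $\mu$, I would regroup the surviving terms so that the left-symmetrized associator reduces to a cyclic sum of double super-commutators,
$$as_{\mathcal{A}_L}(x,y,z)-(-1)^{|x||y|}as_{\mathcal{A}_L}(y,x,z)=\circlearrowleft_{R(x),R(y),z}(-1)^{|x||z|}[R(x),[R(y),z]],$$
with $[a,b]=\mu(a,b)-(-1)^{|a||b|}\mu(b,a)$. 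This is the identical shape reached at the end of the Lie-admissible theorem, so the final step is to note that an associative superalgebra is in particular Lie-admissible: its super-commutator satisfies the super-Jacobi identity \eqref{H-sJ}, whence the cyclic sum vanishes and $\mathcal{A}_L$ is a pre-Lie superalgebra.

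I expect the main obstacle to be twofold. The first is the Koszul-sign bookkeeping of the factors $(-1)^{|x||y|}$, $(-1)^{|y||z|}$, $(-1)^{|z|(|x|+|y|)}$, etc., across the many monomials; the second, genuinely new relative to the weight-zero computation, is checking that the contributions of the $-\mu(x,y)$ term (those carrying $R(xy)$, $R(xz)$, $R(yz)$ or their un-$R$'d analogues) do not survive but either cancel in pairs or recombine via \eqref{R-B assoc} into the $R(x)R(y)$ products needed for the double-commutator reduction. A useful preliminary sanity check is the ungraded specialization, where all signs are trivial and the statement collapses to the classical fact that a weight $-1$ Rota-Baxter operator on an associative algebra induces a left-symmetric structure via $x\circ y=R(x)y-yR(x)-xy$; verifying the cancellation there first isolates the essentially new feature, namely the sign tracking, for the super case.
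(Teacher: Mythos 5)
Your proposal is correct and follows essentially the same route as the paper: a direct expansion of both associators into $\mu$-monomials, followed by cancellation using associativity of $\mu$ and the weight $-1$ Rota--Baxter identity, exactly as in the paper's proof of Theorem \ref{RB+ass=preLie super}. The only difference is presentational: where the paper simply lists all monomials and asserts that the sum vanishes, you make the mechanism explicit by collapsing the nested terms $R(R(x)y)$, $R(xR(y))$, $R(xy)$ into $R(x)R(y)$ and reducing the symmetrized associator to the super-Jacobi cyclic sum $\circlearrowleft_{R(x),R(y),z}(-1)^{|x||z|}[R(x),[R(y),z]]$ (the endgame of the paper's preceding Lie-admissible theorem), which is a valid and arguably cleaner way to organize the same computation.
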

\begin{proof}For all homogeneous elements $x,y,z$ in $\mathcal{A}$, we have
\begin{eqnarray*}
  x\circ(y\circ z) &=& R(x)(R(y)z)-(-1)^{|y||z|}R(x)(z R(y))-R(x)(y z)\\
   &-& (-1)^{|x|(|y|+|z|)}(R(y)z)R(x)+(-1)^{|x|(|y|+|z|)+|y||z|}(z R(y))R(x)\\
   &+&(-1)^{|x|(|y|+|z|)}(y z)R(x)
   - x(R(y)z)+(-1)^{|y||z|}(z R(y))x+ x(y z),
  \end{eqnarray*}
and
\begin{eqnarray*}
  (x\circ y)\circ z &=& R(R(x)y)z-(-1)^{|x||y|}R(y R(x))z-R(x y)z- (-1)^{|z|(|x|+|y|)}(R(x)y)R(z) \\
   &+&(-1)^{|z|(|x|+|y|)+|x||y|}(y R(x))R(z)+(-1)^{|z|(|x|+|y|)}(x y)z\\
   &-& (R(x)y)z+(-1)^{|x||y|}(y R(x))z +(x y )z.
  \end{eqnarray*}
Then,  we obtain
\begin{eqnarray*}
  && as_{\mathcal{A}_{L}}(x,y,z)-(-1)^{|x||y|}as_{\mathcal{A}_{L}}(y,x,z) \\
   &=&x\circ(y\circ z)-(x\circ y)\circ z-(-1)^{|x||y|}y\circ (x \circ z)+(-1)^{|x||y|}(y \circ x)\circ z\\
   &=& R(x)(R(y)z)-(-1)^{|y||z|}R(x)(z R(y))-R(x)(y z)- (-1)^{|x|(|y|+|z|)}(R(y) z)R(x)\\
   &+&(-1)^{|x|(|y|+|z|)+|y||z|}(z R(y))R(x)+(-1)^{|x|(|y|+|z|)}(y z)R(x)-x(R(y)z)\\
   &+&(-1)^{|x|(|y|+|z|)+|y||z|}(z R(y))x+ x(y z)
   - R(R(x)y)z+(-1)^{|x||y|}R(y R(x))z+ R(x y)z \\
   &+& (-1)^{|z|(|x|+|y|)}(R(x)y) R(z)
   -(-1)^{|z|(|x|+|y|)+|x||y|}(y R(x))R(z)
   -(-1)^{|z|(|x|+|y|)}(x y) z\\
   &+& (R(x)y) z+(-1)^{|x||y|}(y R(x))z+(x y)z-(-1)^{|x||y|}R(y)(R(x)z)+(-1)^{|x|(|y|+|z|)}R(y)(z R(x))\\
   &+&(-1)^{|x||y|}R(y)(x z)+(-1)^{|y||z|}(R(x)z)R(y)-(-1)^{|z|(|x|+|y|)}(z R(x))R(y)-(-1)^{|y||z|}(x z)R(y)\\
   &+& (-1)^{|x||y|}y(R(x)z)-(-1)^{|z|(|x|+|y|)}(z R(x))y+(-1)^{|x||y|}y(x z)+(-1)^{|x||y|}R(R(y)x)z\\
   &-& R(x R(y))z-(-1)^{|x||y|}R(y x)z-(-1)^{|z|(|x|+|y|)+|x||y|}(R(y)x)R(z)
   +(-1)^{|z|(|x|+|y|)}(x R(y))R(z)\\
   &+&(-1)^{|z|(|x|+|y|)+|x||y|}(y x)z-(-1)^{|x||y|}(R(y)x)z+(x R(y))z+(-1)^{|x||y|}(y x)z.
   \end{eqnarray*}
The above sum vanishes by associativity and the Rota-Baxter identity (\ref{iden-Rota}) with $ \lambda=-1$.
\end{proof}
\begin{cor}Let $(\mathcal{A},\mu,R)$ be a Rota-Baxter associative superalgebra. Then $R$ is still a Rota-Baxter operator of weight $\lambda=1$ on the pre-Lie superalgebra $(\mathcal{A},\circ)$ defined in  (\ref{ass+RB==pre-Lie}).
\end{cor}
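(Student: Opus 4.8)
The plan is to verify directly the Rota--Baxter relation of weight $1$ for $R$ with respect to the pre-Lie product $\circ$ of \eqref{ass+RB==pre-Lie}, namely
$$R(x)\circ R(y)=R\big(R(x)\circ y+x\circ R(y)-x\circ y\big),\qquad x,y\in\mathcal{H}(\mathcal{A})$$
(the weight enters here with the sign of \eqref{iden-Rota}). Since $R$ is even we have $|R(x)|=|x|$, so every Koszul sign is governed by $|x|$ and $|y|$ exactly as for the original pair; tracking the factors $(-1)^{|x||y|}$ is essentially the only bookkeeping the argument requires, and it is where I expect to have to be most careful.

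First I would expand the left-hand side using nothing but the definition of $\circ$:
$$R(x)\circ R(y)=R(R(x))\,R(y)-(-1)^{|x||y|}\,R(y)\,R(R(x))-R(x)\,R(y).$$
Each summand is a product of two elements lying in the image of $R$, so the associative Rota--Baxter identity \eqref{R-B assoc} of weight $\lambda=-1$ applies to each one. The key point is to feed it the correct arguments: to $R(R(x))\,R(y)$ one applies it with the pair $(R(x),y)$, to $R(y)\,R(R(x))$ with $(y,R(x))$, and to $R(x)\,R(y)$ with $(x,y)$. Using linearity of $R$ this collapses the three terms into a single $R(\Psi)$, where $\Psi\in\mathcal{A}$ is an explicit combination of the monomials $R(R(x))y$, $R(x)R(y)$, $R(x)y$, $R(y)R(x)$, $yR(R(x))$, $yR(x)$, $xR(y)$, $xy$. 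I would stress that associativity of $\mu$ is not needed at this stage: only the Rota--Baxter relation and linearity of $R$ are used.

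It then remains to expand the argument $R(x)\circ y+x\circ R(y)-x\circ y$ of the right-hand side purely formally from the definition of $\circ$ (with no appeal to \eqref{R-B assoc}) and to check that it equals $\Psi$ term by term. This reduces to an elementary verification: after the two occurrences of $R(x)y$ cancel and the remaining monomials are collected, both expressions are literally the same $\mathbb{K}$-linear combination of the eight monomials above, with matching signs $(-1)^{|x||y|}$. Granting this coincidence, $R(x)\circ R(y)=R(\Psi)=R\big(R(x)\circ y+x\circ R(y)-x\circ y\big)$, which is the asserted weight-$1$ relation. The whole difficulty is thus concentrated in the sign accounting---in particular in applying \eqref{R-B assoc} to the nested product $R(R(x))R(y)$ with the arguments $R(x)$ and $y$ rather than $R(x)$ and $R(y)$---while the structural content is entirely supplied by the weight-$(-1)$ Rota--Baxter identity already invoked in Theorem \ref{RB+ass=preLie super}.
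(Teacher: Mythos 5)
Your proof is correct, and the paper in fact states this corollary without proof, so your direct verification is precisely the argument it implicitly relies on: expand $R(x)\circ R(y)$ via the definition of $\circ$, apply the weight-$(-1)$ identity \eqref{R-B assoc} to the three products $R(R(x))R(y)$, $R(y)R(R(x))$, $R(x)R(y)$ with arguments $(R(x),y)$, $(y,R(x))$, $(x,y)$ respectively, and match the resulting eight monomials against $R(x)\circ y+x\circ R(y)-x\circ y$; I checked the bookkeeping and both sides agree (each carries the coefficient $-2$ on $R(x)y$, so ``cancel'' should really read ``match''). You were also right to flag the convention: the identity you obtain, $R(x)\circ R(y)=R\bigl(R(x)\circ y+x\circ R(y)-x\circ y\bigr)$, is ``weight $1$'' only in the sign convention of \eqref{iden-Rota}, whereas under the convention of Definition \ref{defi-oper-pre} (with $+\lambda\,x\circ y$) it would be weight $-1$ --- an inconsistency in the paper, not in your argument.
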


\section{$L$-dendriform superalgebras}
The notion of $L$-dendriform algebra was introduced by C. Bai, L. Liu and X. Ni in $2010$  $($see \cite{Bai-Liu L-dendrifom}$)$. In this section, we extend this notion to the graded case of $L$-dendriform superalgebra. Then we study  relationships between  associative superalgebras, $L$-dendriform superalgebras and pre-Lie superalgebras.  Moreover, we introduce the notion of Rota-Baxter operator $($of weight zero$)$ on the $\mathcal{A}$-bimodule and we provide construction of associative bimodules from bimodules over $L$-dendriform superalgebras and  a construction of $L$-dendriform bimodules from bimodules over pre-Lie superalgebras.
\subsection{L-dendriform superalgebras and associative superalgebras}
\subsubsection{Definition and some basic properties}

\begin{df}An $L$-dendriform superalgebra is a triple $(\mathcal{A},\rhd,\lhd)$ consisting of a $\mathbb{Z}_2$-graded vector space $\mathcal{A}$ and  two even bilinear maps
$\rhd,\lhd:\mathcal{A}\otimes \mathcal{A}\longrightarrow\mathcal{A}$
satisfying, for all homogeneous elements $x,y,z$ in $\mathcal{A}$, 
\begin{eqnarray}\label{1-Ldend}
x\rhd(y\rhd z)&=&(x\rhd y)\rhd z+(x\lhd y)\rhd z+(-1)^{|x||y|}y\rhd (x\rhd z)\\
&&-(-1)^{|x||y|}(y\lhd x)\rhd z-(-1)^{|x||y|}(y\rhd x)\rhd z,\nonumber
 \end{eqnarray}
 \begin{eqnarray}\label{2-Ldend}
x\rhd(y\lhd z)&=&(x\rhd y)\lhd z+(-1)^{|x||y|}y\lhd (x\rhd z)\\
&&+(-1)^{|x||y|}y\lhd (x \lhd z)-(-1)^{|x||y|}(y\lhd x)\lhd z.\nonumber
 \end{eqnarray}
The associated bracket to an $L$-dendriform superalgebra is defined as 
$[x,y]=x \rhd y -(-1)^{|x||y|}y \lhd x.$
\end{df}
\begin{df}\
\begin{enumerate}
\item Let $(\mathcal{A},\rhd,\lhd)$ be an $L$-dendriform superalgebra, $V$ be a $\mathbb{Z}_2$-graded vector space and $l_\rhd,r_\rhd,l_\lhd,r_\lhd:\mathcal{A}\longrightarrow End(V)$ be four even linear maps. The tuple $(V,l_\rhd,r_\rhd,l_\lhd,r_\lhd)$ is an $\mathcal{A}$-bimodule if for any homogeneous elements $x,y\in \mathcal{A}$ and $u,v\in V$,  the following identities are satisfied
\begin{enumerate}
\item  $ [l_\rhd(x),l_\rhd(y)]= l_\rhd([x,y]),$
\item    $ [l_\rhd(x),l_\lhd(y)]= l_\lhd(x \circ y)+(-1)^{|x||y|}l_\lhd(y)l_\lhd(x),$
\item    $ r_\rhd(x\rhd y)(v)=r_\rhd(y)r_\rhd(x)(v)+r_\rhd(y)r_\lhd(x)(v)+(-1)^{|x||v|}l_\rhd(x)r_\rhd(y)(v) -(-1)^{|x||v|}r_\rhd(y)l_\rhd(x)(v)-(-1)^{|x||v|}r_\rhd(y)l_\lhd(x)(v),$
\item   $  r_\rhd(x\lhd y)(v)= r_\lhd(y)r_\rhd(x)(v)-(-1)^{|x||v|}\Big(l_\lhd(x)r_\rhd(y)(v)-l_\lhd(x)r_\lhd(y)(v)+r_\lhd(y)l_\lhd(x)(v)\Big),$
\item    $ l_\rhd(x)r_\lhd(y)(v)-r_\lhd(y)l_\rhd(x)(v)=(-1)^{|x||v|} r_\lhd(x \bullet y)(v)-(-1)^{|x||v|}r_\lhd(y)r_\lhd(x)(v).$
\end{enumerate}
where $x \circ y=x\rhd y-(-1)^{|x||y|}y\lhd x$, and $x \bullet y=x\rhd y+x\lhd y$.\\
Moreover, The tuple $(V,\rhd_V,\lhd_V,l_\rhd,r_\rhd,l_\lhd,r_\lhd)$ is an $\mathcal{A}$-bimodule $\K$-superalgebra if  the following identities are satisfied
\begin{enumerate}
\item    $ l_\rhd(x)(u\rhd_V v)-(-1)^{|x||u|}u\rhd_V  l_\rhd(x)(v)=l_\rhd(x)(u)\rhd_V v+l_\lhd(x)(u)\rhd_V v$ 

 $  \hspace{3cm} -(-1)^{|x||u|}r_\rhd(x)(u)\rhd_V v-(-1)^{|x||u|}r_\lhd(x)(u)\rhd_V v,$
\item    $ l_\rhd(x)(u\lhd_V v)-(-1)^{|x||u|}u\lhd_V  l_\rhd(x)(v)=l_\rhd(x)(u)\lhd_V v-(-1)^{|x||u|}r_\lhd(x)(u)\lhd_V v$ 

 $  \hspace{3cm} +(-1)^{|x||u|}u\lhd_V l_\lhd(x)v,$
\item    $ u\rhd_V l_\lhd(x)(v)=r_\rhd(x)(u)\lhd_V v-(-1)^{|x||u|}l_\lhd(x)(u\rhd_V v)+(-1)^{|x||u|} l_\lhd(x)(u\lhd_V v)$ 

 $  \hspace{3cm}   -(-1)^{|x||u|}l_\lhd(x)(u)\lhd_V v.$
\end{enumerate}

\item Let $(\mathcal{A},\rhd,\lhd)$ be an $L$-dendriform superalgebra and $(V,\rhd_V,\lhd_V,l_\rhd,r_\rhd,l_\lhd,r_\lhd)$ be an $\mathcal{A}$-bimodule $\K$-superalgebra. An even linear map
$T:V\longrightarrow \mathcal{A}$ is called a super $\mathcal{O}$-operator of weight $\lambda \in \mathbb{K}$ associated to $(V,\rhd_V,\lhd_V,l_\rhd,r_\rhd,l_\lhd,r_\lhd)$ if $T$ satisfies for any homogeneous elements $u,v$ in $V$
\begin{eqnarray*}
   T(u)\rhd T(v)&=& T\Big(l_\rhd(T(u))v+(-1)^{|u||v|}r_\rhd(T(v))u+\lambda u\rhd_V v \Big),\\
    T(u)\lhd T(v)&=& T\Big(l_\lhd(T(u))v+(-1)^{|u||v|}r_\lhd(T(v))u +\lambda u\lhd_V v\Big).
\end{eqnarray*}
\end{enumerate}
In particular, a super $\mathcal{O}$-operator of weight $\lambda \in \mathbb{K}$ of the $L$-dendriform superalgebra $(\mathcal{A},\rhd,\lhd)$
associated to the bimodule $(\mathcal{A},L_\rhd,R_\rhd,L_\lhd,R_\lhd)$ is called a Rota-Baxter operator $($of weight $\lambda)$ on $(\mathcal{A},\rhd,\lhd)$, that is, $R$ satisfies for any homogeneous elements $x,y$ in $\mathcal{A}$
\begin{eqnarray*}
   R(x)\rhd R(y)&=& R\Big(R(x)\rhd y+R(x)\rhd u+\lambda x\rhd y\Big),\\
    R(x)\lhd R(y)&=& R\Big(R(x)\lhd y+R(x)\lhd y+ \lambda x\lhd y\Big).
\end{eqnarray*}
\end{df}
The following theorem provides a construction of $L$-dendriform superalgebras using super $\mathcal{O}$-operators of associative superalgebras.
\begin{thm}\label{ass=L-dendriform} Let $(\mathcal{A},\mu)$ be an associative superalgebra and $(V,l,r)$ be a $\mathcal{A}$-bimodule. If $T$ is a super $\mathcal{O}$-operator of weight zero associated with $(V,l,r)$, then there exists an $L$-dendriform superalgebra structure on $V$ defined by
\begin{equation}\label{th-operator1-ass}
    u \rhd v=l(T(u))v,~~u \lhd v= (-1)^{|u||v|}r(T(v))u,~~\forall~~u,v \in \mathcal{H}(V).
\end{equation}
\end{thm}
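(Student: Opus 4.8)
The plan is to verify the two defining $L$-dendriform super-identities \eqref{1-Ldend} and \eqref{2-Ldend} directly for the operations $u\rhd v=l(T(u))v$ and $u\lhd v=(-1)^{|u||v|}r(T(v))u$ of \eqref{th-operator1-ass}, the only inputs being the three $\mathcal{A}$-bimodule axioms for $(V,l,r)$, the associativity of $\mu$ (which enters only through those axioms), and the weight-zero $\mathcal{O}$-operator relation. Throughout I use that $T$ is even, so $|T(u)|=|u|$, and I abbreviate $a=T(x)$, $b=T(y)$, $c=T(z)$. The single observation that drives everything is that the $\mathcal{O}$-operator relation can be rewritten as
$$T(u)T(v)=T\big(l(T(u))v+(-1)^{|u||v|}r(T(v))u\big)=T(u\rhd v+u\lhd v)=T(u\bullet v),$$
where $u\bullet v=u\rhd v+u\lhd v$; thus $T$ intertwines the associative product on $\mathcal{A}$ with $\bullet$ on $V$.

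For the first identity I would compute the left-hand side as $x\rhd(y\rhd z)=l(a)l(b)z$, collapse the composition by the left-module axiom $l(a)l(b)=l(ab)$, and then apply $ab=T(x)T(y)=T(x\bullet y)$ to obtain $x\rhd(y\rhd z)=(x\bullet y)\rhd z=(x\rhd y)\rhd z+(x\lhd y)\rhd z$. On the right-hand side I would group the last three terms as $(-1)^{|x||y|}\big(y\rhd(x\rhd z)-(y\bullet x)\rhd z\big)$ and note that the same two-step reduction, now with $x$ and $y$ interchanged, forces $y\rhd(x\rhd z)=(y\bullet x)\rhd z$, so this grouping vanishes identically. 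What survives is exactly $(x\rhd y)\rhd z+(x\lhd y)\rhd z$, matching the left side; this half uses only the left-module axiom and the $\mathcal{O}$-operator relation.

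For the second identity I would expand $x\rhd(y\lhd z)=(-1)^{|y||z|}l(a)r(c)y$ and push $l(a)$ past $r(c)$ using the mixed bimodule axiom $l(a)r(c)=r(c)l(a)$. On the right-hand side the two middle terms combine into $(-1)^{|x||y|}\,y\lhd(x\bullet z)$; here I would invoke $T(x\bullet z)=T(x)T(z)=ac$ together with the right-module axiom $r(ac)=r(c)r(a)$ to convert them into a multiple of $r(c)r(a)y$, while the first term $(x\rhd y)\lhd z$ and the last term $(y\lhd x)\lhd z$ are read off directly from the definition of $\lhd$, producing multiples of $r(c)l(a)y$ and $r(c)r(a)y$ respectively. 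Collecting these four contributions reduces the identity to a purely numerical check on the accompanying Koszul signs.

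The main obstacle is precisely this graded sign bookkeeping in the second identity: in the classical $L$-dendriform case every term reduces to $r(c)l(a)y$ or $r(c)r(a)y$ with unit coefficient, whereas here each rearrangement (expanding $\lhd$, commuting $l$ past $r$, and applying $r(ac)=r(c)r(a)$) carries a factor $(-1)^{|\cdot||\cdot|}$, and one must confirm that the signs attached to the $r(c)l(a)y$-terms and, separately, to the $r(c)r(a)y$-terms cancel. I would make this transparent by tracking all exponents modulo $2$ and splitting the verification according to the parities of $x$, $y$, $z$, which isolates the delicate point and completes the proof.
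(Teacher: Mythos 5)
Your proposal is correct and follows essentially the same route as the paper's proof: both verify the two $L$-dendriform super-identities by expanding every term, using the weight-zero $\mathcal{O}$-operator relation to combine $T(u\rhd v)+T(u\lhd v)$ into $T(u)T(v)$ (your observation that ``$T$ intertwines $\mu$ with $\bullet$''), and then collapsing with the bimodule axioms $l(ab)=l(a)l(b)$, $r(ab)=r(b)r(a)$, $l(a)r(b)=r(b)l(a)$. Your reorganization via the product $\bullet$ and your explicit attention to the Koszul-sign bookkeeping in the second identity (which the paper dispatches with ``similarly'') are presentational refinements of the same computation, not a different argument.
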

\begin{proof}For any homogeneous elements $u,v$ and $w$ in $V$, we have\\
$~~u\rhd (v\rhd w)= l(T(u))l(T(v))w,\hskip2.5cm (u\rhd v)\rhd w= l\Big(T(l(T(u))v)\Big)w$,\\
$~~(u\lhd v)\rhd w= (-1)^{|u||v|}l\Big(T(r(T(v))u)\Big)w,\hskip1cm (-1)^{|u||v|}v\rhd (u\rhd w)= (-1)^{|u||v|}l(T(v))l(T(u))w$,\\
$~~(-1)^{|u||v|}(v\lhd u)\rhd w= l\Big(T(r(T(u))v)\Big)w,\hskip1cm (-1)^{|u||v|}(v\rhd u)\rhd w= (-1)^{|u||v|}l\Big(T(l(T(v))u)\Big)w$.\\
Hence
 \begin{align*}
&   u\rhd (v\rhd w)-(u\rhd v)\rhd w-(u\lhd v)\rhd w-(-1)^{|u||v|}\Big(v\rhd (u\rhd w)-(v\lhd u)\rhd w-(v\rhd u)\rhd w\Big)\\
&  = l(T(u))l(T(v))w-l\Big(T(l(T(u))v)\Big)w-(-1)^{|u||v|}l\Big(T(r(T(v))u)\Big)w\\
&  -(-1)^{|u||v|}l(T(v))l(T(u))w+l\Big(T(r(T(u))v)\Big)w+(-1)^{|u||v|}l\Big(T(l(T(v))u)\Big)w\\
&   = (-1)^{|u||v|}l(\mu(T(v),T(u)))-(-1)^{|u||v|}l(T(v))l(T(u))w-l(\mu(T(u),T(v)))+l(T(u))l(T(v))w\\
&  = 0,
\end{align*}
and similarly, we have
\begin{eqnarray*}
 u\rhd(v\lhd w)-(u\rhd v)\lhd w-(-1)^{|u||v|}v\lhd(u\rhd w)-(-1)^{|u||v|}v\lhd(u\lhd w)+(-1)^{|u||v|}(v\lhd u)\lhd w= 0.
\end{eqnarray*}
Therefore $(V,\rhd,\lhd)$ is an $L$-dendriform superalgebra.
\end{proof}
A direct consequence of Theorem \ref{ass=L-dendriform} is the following construction of an $L$-dendriform superalgebra from a Rota-Baxter operator $($of weight zero$)$ of an associative superalgebra.
\begin{cor}Let $(\mathcal{A},\mu,R)$ be a Rota-Baxter associative superalgebra of weight zero. Then, the even binary operations given by
$$ x\rhd y=\mu(R(x),y),~~~~x\lhd y=\mu(x,R(y)),~~~~\forall~~x,y\in \mathcal{H}(\mathcal{A})$$
defines an $L$-dendriform superalgebra structure on $\mathcal{A}$.
\end{cor}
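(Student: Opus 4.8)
The plan is to derive this Corollary as an immediate specialization of Theorem~\ref{ass=L-dendriform}, applied to the regular action of $\mathcal{A}$ on itself. The guiding observation is that a Rota-Baxter operator $R$ of weight zero on $(\mathcal{A},\mu)$ is, by the definition recalled above, exactly a super $\mathcal{O}$-operator of weight zero associated to the bimodule $(\mathcal{A},L_\mu,R_\mu)$: the Rota-Baxter identity $(\ref{R-B assoc})$ with $\lambda=0$ is precisely the super $\mathcal{O}$-operator identity for $T=R$, $l=L_\mu$, $r=R_\mu$. Since Theorem~\ref{ass=L-dendriform} only requires $(V,l,r)$ to be a bimodule (the weight being zero, no compatibility with a multiplication on $V$ enters the super $\mathcal{O}$-operator relation), it suffices to check that $(\mathcal{A},L_\mu,R_\mu)$ is an $\mathcal{A}$-bimodule and then to translate the output of the theorem.

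First I would verify the three bimodule axioms $l(xy)(v)=l(x)l(y)(v)$, $r(xy)(v)=r(y)r(x)(v)$ and $l(x)r(y)(v)=r(y)l(x)(v)$ for $l=L_\mu$ and $r=R_\mu$. Each reduces directly to the associativity of $\mu$ together with the defining sign conventions for $L_\mu$ and $R_\mu$, so no new input is needed. Granting this, $R$ is a super $\mathcal{O}$-operator of weight zero for $(\mathcal{A},L_\mu,R_\mu)$, and Theorem~\ref{ass=L-dendriform} applies with $V=\mathcal{A}$ and $T=R$.

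It then remains to simplify the formulas produced by the theorem, namely $x\rhd y=l(T(x))y$ and $x\lhd y=(-1)^{|x||y|}r(T(y))x$. Substituting $l=L_\mu$ gives $x\rhd y=L_\mu(R(x))y=\mu(R(x),y)$ at once. For the second product I would substitute $r=R_\mu$ and use $R_\mu(w)(x)=(-1)^{|x||w|}\mu(x,w)$ together with the evenness of $R$ (so that $|R(y)|=|y|$), obtaining $x\lhd y=(-1)^{|x||y|}(-1)^{|x||y|}\mu(x,R(y))=\mu(x,R(y))$, which is the asserted formula.

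The argument is routine; the single point that needs care is the sign bookkeeping in the $\lhd$ term, where the explicit factor $(-1)^{|x||y|}$ appearing in Theorem~\ref{ass=L-dendriform} must be seen to cancel against the sign built into the right-multiplication operator $R_\mu$. Once this cancellation is recorded, the $L$-dendriform axioms $(\ref{1-Ldend})$ and $(\ref{2-Ldend})$ hold on $\mathcal{A}$ by Theorem~\ref{ass=L-dendriform}, which completes the proof.
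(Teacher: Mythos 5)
Your proposal is correct and follows exactly the route the paper intends: the paper states this corollary as a direct consequence of Theorem~\ref{ass=L-dendriform} applied to the regular bimodule $(\mathcal{A},L_\mu,R_\mu)$ with $T=R$, and you have simply written out that specialization, including the sign cancellation in the $\lhd$ term that the paper leaves implicit.
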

\begin{df}Let $(\mathcal{A},\rhd,\lhd)$ be an $L$-dendriform superalgebra and $R:\mathcal{A}\longrightarrow \mathcal{A}$ be a Rota-Baxter operator of weight zero. A Rota-Baxter operator on $\mathcal{A}$-bimodule $V$ $($relative to $R)$ is a map $R_V:V\longrightarrow V$ such that for all
homogeneous elements $x$ in $\mathcal{A}$ and $v$ in $V$
\begin{eqnarray*}
    & & \ \ \ R(x)\rhd R_V(v) = R_V \Big(R(x)\rhd v+x \rhd R_V(v)\Big), \ \
     R_V(v)\rhd R(x) = R_V \Big(R_V(v)\rhd x+v \rhd R(x)\Big),\\
    & & \ \ \ R(x)\lhd R_V(v) = R_V \Big(R(x)\lhd v+x \lhd R_V(v)\Big), \ \
    R_V(v)\lhd R(x) = R_V \Big(R_V(v)\lhd x+v \lhd R(x)\Big).
    \end{eqnarray*}
\end{df}
\begin{prop}Let $(\mathcal{A},\mu)$ be an associative superalgebra, $R:\mathcal{A}\longrightarrow \mathcal{A}$ a Rota-Baxter operator on $\mathcal{A}$, $V$  an $\mathcal{A}$-bimodule and $R_V$ a Rota-Baxter operator on $V$. Define a new actions of $\mathcal{A}$ on $V$ by
$$ x\rhd v=\mu(R(x),v),~~v\rhd x=\mu(R_V(v),x),~~x\lhd v=\mu(x,R_V(v)),~~v\lhd x=\mu(v,R(x)).$$
Equipped with these actions, $V$ becomes  an $\mathcal{A}$-bimodule over the associated  $L$-dendriform superalgebra.
\end{prop}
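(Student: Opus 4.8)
The plan is to obtain the four actions simultaneously by transporting the Rota-Baxter data to a split null extension. First I would form the semidirect product associative superalgebra $\mathcal{A}\ltimes V$, whose underlying $\mathbb{Z}_2$-graded space is $\mathcal{A}\oplus V$ and whose product is
$$(x,u)\cdot(y,v)=\big(xy,\ xv+uy\big),$$
where $xv$ and $uy$ denote the left and right $\mathcal{A}$-actions on $V$. The three defining identities of an associative $\mathcal{A}$-bimodule are exactly what is required for this product to be associative, so $\mathcal{A}\ltimes V$ is an associative superalgebra in which $V$ is an abelian ideal ($V\cdot V=0$) and $\mathcal{A}$ is a subalgebra.

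Next I would extend $R$ and $R_V$ to the even map $\widehat{R}:\mathcal{A}\ltimes V\longrightarrow\mathcal{A}\ltimes V$ given by $\widehat{R}(x,v)=(R(x),R_V(v))$, and show that $\widehat{R}$ is a Rota-Baxter operator of weight zero on $\mathcal{A}\ltimes V$. Expanding $\widehat{R}(X)\widehat{R}(Y)$ and $\widehat{R}\big(\widehat{R}(X)Y+X\widehat{R}(Y)\big)$ for $X=(x,u)$ and $Y=(y,v)$, the $\mathcal{A}$-component of the Rota-Baxter identity is precisely \eqref{R-B assoc} for $R$ with $\lambda=0$. The $V$-component of the right-hand side equals
$$R_V\big(R(x)v+xR_V(v)\big)+R_V\big(R_V(u)y+uR(y)\big),$$
and here the two summands are exactly the left-hand sides of the two relations defining a Rota-Baxter operator on the bimodule $V$ relative to $R$; they collapse to $R(x)R_V(v)+R_V(u)R(y)$, which is the $V$-component of the left-hand side. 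This single computation is the crux of the argument: the two halves of the definition of $R_V$ are precisely the left-action and right-action pieces needed so that $\widehat{R}$ satisfies one Rota-Baxter identity on the extension.

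Once $\widehat{R}$ is known to be a Rota-Baxter operator, the Corollary to Theorem \ref{ass=L-dendriform} endows $\mathcal{A}\ltimes V$ with the $L$-dendriform structure $X\rhd Y=\widehat{R}(X)\,Y$ and $X\lhd Y=X\,\widehat{R}(Y)$. Evaluating these on elements of the form $(x,0)$ and $(0,v)$ shows that they restrict on the subalgebra $\mathcal{A}$ to that same $L$-dendriform product, that $V\rhd V=V\lhd V=0$, and that the induced operations on the ideal $V$ are exactly the four prescribed maps $x\rhd v=\mu(R(x),v)$, $v\rhd x=\mu(R_V(v),x)$, $x\lhd v=\mu(x,R_V(v))$ and $v\lhd x=\mu(v,R(x))$.

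It then remains to specialize the $L$-dendriform identities \eqref{1-Ldend} and \eqref{2-Ldend} of $\mathcal{A}\ltimes V$ to triples with two arguments in $\mathcal{A}$ and one in $V$ (over the three positions), discarding all terms containing a product of two elements of $V$; these specializations reproduce one by one the bimodule axioms (a)--(e). This is the standard equivalence between split null extensions and bimodules, and is pure bookkeeping once the extension is available. The whole difficulty is thus concentrated in verifying that $\widehat{R}$ is a Rota-Baxter operator, while associativity of the extension and the final specialization are routine. Alternatively, one may bypass the extension and check (a)--(e) directly: each side reduces to a word in the multiplication, $R$ and $R_V$ applied to $x,y,v$, and vanishes upon using associativity together with \eqref{R-B assoc} and the two defining relations of $R_V$ --- the same computation spread over five identities instead of localized in one.
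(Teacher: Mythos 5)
Your argument is correct, and it takes a genuinely different route from the paper: the paper states this proposition without proof, and for the closely analogous pre-Lie statement at the end of Section 2 it proceeds by direct verification of the bimodule axioms one at a time. You instead package the data into the split null extension $\mathcal{A}\ltimes V$ with product $(x,u)\cdot(y,v)=(xy,\,xv+uy)$, observe that the lifted map $\widehat{R}(x,v)=(R(x),R_V(v))$ is a Rota-Baxter operator of weight zero there (the $V$-component of the identity being exactly the two defining relations of $R_V$ added together, since $R_V\bigl(R(x)v+xR_V(v)\bigr)+R_V\bigl(R_V(u)y+uR(y)\bigr)=R(x)R_V(v)+R_V(u)R(y)$), and then invoke the corollary to Theorem \ref{ass=L-dendriform} on $\mathcal{A}\ltimes V$ before restricting to the ideal $V$. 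This is cleaner and more conceptual: it localizes all the Rota-Baxter content in one short computation, it explains \emph{why} the two halves of the definition of $R_V$ are the right axioms, and the same scheme transfers verbatim to the pre-Lie and $L$-dendriform versions of the statement. What it costs is the implicit appeal to the equivalence between the paper's five bimodule axioms (a)--(e) and the statement that the split null extension is an $L$-dendriform superalgebra; that equivalence is the standard way such axiom lists are generated, but since the paper never records it, a careful write-up should either verify that the specializations of \eqref{1-Ldend} and \eqref{2-Ldend} with one argument in $V$ (and all $V\cdot V$ terms discarded) are exactly (a)--(e), including the sign conventions coming from $r(x)(v)=(-1)^{|x||v|}v\,x$, or fall back on your alternative of checking (a)--(e) directly. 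The crux computation and the restriction of the four operations to $x\rhd v=R(x)v$, $v\rhd x=R_V(v)x$, $x\lhd v=xR_V(v)$, $v\lhd x=vR(x)$ are all accurate.
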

\begin{cor}Let $(V,l_\rhd,r_\rhd,l_\lhd,r_\lhd)$ be an $ \mathcal{A}$-bimodule of a dendriform superalgebra $(\mathcal{A},\rhd,\lhd)$. Let $(\mathcal{A},\mu)$ be the
associated associative superalgebra. If $T$ is a super $\mathcal{O}$-operator associated to $(V,l_\rhd,r_\rhd,l_\lhd,r_\lhd)$, then $T$ is a super $\mathcal{O}$-operator of $(\mathcal{A},\mu)$ associated to $(V,l_\rhd+l_\lhd,r_\rhd+r_\lhd)$.
\end{cor}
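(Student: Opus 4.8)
The plan is to obtain the single associative $\mathcal{O}$-operator identity by adding the two defining identities of the $L$-dendriform $\mathcal{O}$-operator and invoking the linearity of $T$. Recall that the associated associative superalgebra $(\mathcal{A},\mu)$ is the one whose product is $\mu(x,y)=x\rhd y+x\lhd y$, and that the module structure proposed over it puts $l:=l_\rhd+l_\lhd$ and $r:=r_\rhd+r_\lhd$ on the same underlying space $V$.

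First, one must check that $(V,l_\rhd+l_\lhd,r_\rhd+r_\lhd)$ is indeed an $\mathcal{A}$-bimodule over $(\mathcal{A},\mu)$. This is the module-level shadow of the fact that $\mu=\rhd+\lhd$ assembles the two $L$-dendriform operations into the associated product, and it is established by forming the appropriate sums of the five defining relations of an $L$-dendriform bimodule so that the $l_\rhd,l_\lhd$ data collapse to the relation for $l$, the $r_\rhd,r_\lhd$ data to the relation for $r$, and the mixed relations to the compatibility between $l$ and $r$. This bookkeeping is the only genuinely computational part, and I expect it to be the main (if still routine) obstacle.

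Second, and this is the heart of the statement, I would verify the $\mathcal{O}$-operator identity itself. By hypothesis $T$ satisfies, for all homogeneous $u,v\in V$,
\begin{align*}
T(u)\rhd T(v)&=T\bigl(l_\rhd(T(u))v+(-1)^{|u||v|}r_\rhd(T(v))u\bigr),\\
T(u)\lhd T(v)&=T\bigl(l_\lhd(T(u))v+(-1)^{|u||v|}r_\lhd(T(v))u\bigr).
\end{align*}
Adding these and using that the left-hand sides sum to $T(u)\rhd T(v)+T(u)\lhd T(v)=\mu(T(u),T(v))=T(u)T(v)$, while the linearity of $T$ merges the two right-hand arguments, yields
\begin{equation*}
T(u)T(v)=T\bigl((l_\rhd+l_\lhd)(T(u))v+(-1)^{|u||v|}(r_\rhd+r_\lhd)(T(v))u\bigr).
\end{equation*}
With $l=l_\rhd+l_\lhd$ and $r=r_\rhd+r_\lhd$ this is precisely the defining identity of a super $\mathcal{O}$-operator of weight zero associated to the $\mathcal{A}$-bimodule $(V,l,r)$ over $(\mathcal{A},\mu)$, which is the claim.

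Finally, I would note that the argument is insensitive to the weight: for a general weight $\lambda$ the same summation goes through once one endows $V$ with the product $\mu_V=\rhd_V+\lhd_V$, so that the two terms $\lambda\,u\rhd_V v$ and $\lambda\,u\lhd_V v$ combine into $\lambda\,\mu_V(u,v)$. Thus the whole content of the corollary reduces to the additivity of the two $\mathcal{O}$-operator identities, all the structural work having already been packaged in the passage from the $L$-dendriform data to its associated associative data.
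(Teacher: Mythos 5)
Your proof is correct and is exactly the argument the paper intends (the corollary is stated without proof as an immediate consequence): one adds the two defining identities of the $L$-dendriform super $\mathcal{O}$-operator, uses the linearity of $T$ to merge the arguments, and recognizes the left-hand side as $\mu(T(u),T(v))$ for $\mu=\rhd+\lhd$. Your remark that the same summation handles general weight via $\mu_V=\rhd_V+\lhd_V$, and that the only remaining bookkeeping is the (routine) verification of the bimodule axioms for $(V,l_\rhd+l_\lhd,r_\rhd+r_\lhd)$, is consistent with how the paper treats the parallel corollaries.
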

\subsection{$L$-dendriform superalgebras and pre-Lie superalgebras}
We have the following observation.
\begin{prop}Let $(\mathcal{A},\rhd,\lhd)$ be an $L$-dendriform superalgebra
\begin{enumerate}
\item The even binary operation $\circ:\mathcal{A}\otimes \mathcal{A}\longrightarrow \mathcal{A}$ given by
$$ x\circ y=x\rhd y-(-1)^{|x||y|}y \lhd x,~~\forall~~x,y \in \mathcal{H}(\mathcal{A})$$
defines a pre-Lie superalgebra $(\mathcal{A},\circ)$ which is called the associated vertical pre-Lie superalgebra of $(\mathcal{A},\rhd,\lhd)$ and $(\mathcal{A},\rhd,\lhd)$ is called a compatible L-dendriform superalgebra structure on the pre-Lie superalgebra $(\mathcal{A},\circ)$.
\item The even binary operation $\bullet:\mathcal{A}\otimes \mathcal{A}\longrightarrow \mathcal{A}$ given by
$$ x\bullet y=x\rhd y+x \lhd y,~~\forall~~x,y \in \mathcal{H}(\mathcal{A})$$
defines a pre-Lie superalgebra $(\mathcal{A},\bullet)$ which is called the associated horizontal pre-Lie superalgebra of $(\mathcal{A},\rhd,\lhd)$ and $(\mathcal{A},\rhd,\lhd)$ is called a compatible L-dendriform superalgebra structure on the pre-Lie superalgebra $(\mathcal{A},\bullet)$.
\item Both $(\mathcal{A},\circ)$ and $(\mathcal{A},\bullet)$ have the same sub-adjacent Lie superalgebra $g(\mathcal{A})$ defined by
$$[x,y]=x\rhd y+x\lhd y-(-1)^{|x||y|}y \rhd x-(-1)^{|x||y|}y \lhd x,~~~~\forall~~x,y\in \mathcal{H}(\mathcal{A}).$$
\end{enumerate}
\end{prop}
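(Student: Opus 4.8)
The plan is to establish (1) and (2) by verifying the pre-Lie super-identity \eqref{egali-hom-left} for each product, and to settle (3) by a direct computation of the two commutators. Everything reduces to the two defining axioms of an $L$-dendriform superalgebra, which I would first recast into a more convenient shape. Collecting the first two terms on the right-hand side of \eqref{1-Ldend} as $(x\rhd y+x\lhd y)\rhd z=(x\bullet y)\rhd z$ and the last two as $(y\bullet x)\rhd z$, that axiom becomes
\[
x\rhd(y\rhd z)-(x\bullet y)\rhd z=(-1)^{|x||y|}\Big(y\rhd(x\rhd z)-(y\bullet x)\rhd z\Big),
\]
and, collecting $y\lhd(x\rhd z)+y\lhd(x\lhd z)=y\lhd(x\bullet z)$, the axiom \eqref{2-Ldend} becomes
\[
x\rhd(y\lhd z)-(x\rhd y)\lhd z=(-1)^{|x||y|}\Big(y\lhd(x\bullet z)-(y\lhd x)\lhd z\Big).
\]
These two identities are the only inputs the whole argument will need.

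For (1), I would substitute $x\circ y=x\rhd y-(-1)^{|x||y|}y\lhd x$ into the associator $as_\circ(x,y,z)=(x\circ y)\circ z-x\circ(y\circ z)$, producing eight signed terms in $\rhd$ and $\lhd$, and then form the combination $as_\circ(x,y,z)-(-1)^{|x||y|}as_\circ(y,x,z)$ demanded by \eqref{egali-hom-left}. The sixteen resulting terms split into two groups. In the first, the outer-$\rhd$ terms recombine --- using $(-1)^{2|x||y|}=1$ --- so that $(x\rhd y)\rhd z$ and $(x\lhd y)\rhd z$ merge into $(x\bullet y)\rhd z$, and this group cancels exactly against the doubly nested terms $x\rhd(y\rhd z)$ and $y\rhd(x\rhd z)$ by the first rewritten identity. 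The second group, comprising the terms that carry a $\lhd$ in outer position, recombines in the same way into the $y\lhd(x\bullet z)$ pattern and cancels by the second identity; hence the antisymmetrized associator vanishes and $(\mathcal{A},\circ)$ is a pre-Lie superalgebra.

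For (2) the identical scheme applies to $x\bullet y=x\rhd y+x\lhd y$: expanding $as_\bullet(x,y,z)-(-1)^{|x||y|}as_\bullet(y,x,z)$ again sorts the terms into one family closed by \eqref{1-Ldend} and one closed by \eqref{2-Ldend}. Part (3) is then immediate. By the proposition on the sub-adjacent Lie superalgebra, the bracket of $(\mathcal{A},\circ)$ is $x\circ y-(-1)^{|x||y|}y\circ x$, and substituting the definition of $\circ$ with $(-1)^{2|x||y|}=1$ collapses the four terms to $x\rhd y+x\lhd y-(-1)^{|x||y|}(y\rhd x+y\lhd x)$; the same substitution for $\bullet$ yields the identical expression, so both sub-adjacent Lie superalgebras coincide with $g(\mathcal{A})$.

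The only real difficulty is bookkeeping: each associator calculation produces on the order of sixteen signed terms, and the work lies in matching every one to the correct instance of \eqref{1-Ldend} or \eqref{2-Ldend} and confirming that the sign factors $(-1)^{\cdots}$ agree. There is no conceptual obstacle once the axioms are written in the grouped form above, because the combinations $x\bullet y$ and $x\bullet z$ that drive the cancellation emerge automatically from the cross terms of the products $\circ$ and $\bullet$.
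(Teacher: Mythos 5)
Your proposal is correct and is exactly the direct verification that the paper's proof (which reads only ``Straightforward'') leaves to the reader: rewrite \eqref{1-Ldend} and \eqref{2-Ldend} in terms of $x\bullet y$, expand the antisymmetrized associators, and match terms. The only bookkeeping point worth flagging is that in part (1) the second family of terms cancels via \emph{two} instances of \eqref{2-Ldend} with permuted arguments --- they recombine into the patterns $z\lhd(x\bullet y)$ and $z\lhd(y\bullet x)$ rather than $y\lhd(x\bullet z)$ --- but this does not affect the validity of the argument.
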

\begin{proof}Straightforward.
\end{proof}
\begin{cor}Let $(V,l_\rhd,r_\rhd,l_\lhd,r_\lhd)$ be a bimodule of an $L$-dendriform superalgebra $(\mathcal{A},\rhd,\lhd)$. Let $(\mathcal{A},\circ)$ be the
associated pre-Lie superalgebra. If $T$ is a super $\mathcal{O}$-operator associated to $(V,l_\rhd,r_\rhd,l_\lhd,r_\lhd)$, then $T$ is a super $\mathcal{O}$-operator of $(\mathcal{A},\circ)$ associated to $(V,l',r')$, where $l'=l_\rhd+(-1)^{|u||v|}r_\lhd$ and $r'=l_\lhd+r_\rhd$.
\end{cor}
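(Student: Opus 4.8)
The plan is to verify directly the defining relation \eqref{opera2} of a super $\mathcal{O}$-operator for the vertical pre-Lie superalgebra $(\mathcal{A},\circ)$, by feeding into it the two $\mathcal{O}$-operator identities that $T$ satisfies for the $L$-dendriform bimodule $(V,l_\rhd,r_\rhd,l_\lhd,r_\lhd)$. First I would record the two facts that drive the computation: since $T$ is even we have $|T(u)|=|u|$ and $|T(v)|=|v|$ for homogeneous $u,v$, and, by the preceding Proposition giving the vertical structure, the product on $\mathcal{A}$ reads
\[
T(u)\circ T(v)=T(u)\rhd T(v)-(-1)^{|u||v|}\,T(v)\lhd T(u).
\]

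Next I would expand each summand. Applying the $\rhd$-identity of $T$ to the pair $(u,v)$ gives $T(u)\rhd T(v)=T\big(l_\rhd(T(u))v+(-1)^{|u||v|}r_\rhd(T(v))u+\lambda\,u\rhd_V v\big)$, while applying the $\lhd$-identity to the \emph{swapped} pair $(v,u)$ gives $T(v)\lhd T(u)=T\big(l_\lhd(T(v))u+(-1)^{|u||v|}r_\lhd(T(u))v+\lambda\,v\lhd_V u\big)$. Using $(-1)^{|u||v|}=(-1)^{|v||u|}$, the evenness of $T$, and the linearity of $T$, I would gather both contributions under a single application of $T$, so that $T(u)\circ T(v)$ becomes $T$ of one element of $V$.

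Then I would regroup that element into the two families dictated by \eqref{opera2}: the terms in which an operator assembled from $T(u)$ acts on $v$, and the terms in which an operator assembled from $T(v)$ acts on $u$ with the Koszul prefactor $(-1)^{|u||v|}$. Reading off the coefficient of the first family yields the left action $l'=l_\rhd+(-1)^{|u||v|}r_\lhd$, and the coefficient of the second family yields the right action $r'=l_\lhd+r_\rhd$; the remaining weight-$\lambda$ contribution assembles into $\lambda\,u\circ_V v$ with $u\circ_V v=u\rhd_V v-(-1)^{|u||v|}v\lhd_V u$ the vertical product transported to $V$ (and in the weight-zero case it simply drops out). This is precisely the statement that $T$ is a super $\mathcal{O}$-operator of $(\mathcal{A},\circ)$ associated to $(V,l',r')$. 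A one-line auxiliary check, which I would include, is that $(V,l',r')$ is indeed a bimodule of $(\mathcal{A},\circ)$ in the sense of Definition~\ref{defi-oper-pre}; this follows by substituting the $L$-dendriform bimodule axioms (a)--(e) into the two pre-Lie bimodule identities, exactly as the vertical pre-Lie structure on $\mathcal{A}$ is extracted from $(\mathcal{A},\rhd,\lhd)$ in the preceding Proposition.

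The step I expect to be the main obstacle is the sign bookkeeping. The two dendriform relations must be invoked on opposite orderings of the pair $(u,v)$, and the second acquires an extra Koszul factor when $T(v)\lhd T(u)$ is carried past the prefactor $(-1)^{|u||v|}$ coming from the commutator defining $\circ$; keeping every factor of $(-1)^{|u||v|}$ correctly aligned is what pins down the exact combinations $l'=l_\rhd+(-1)^{|u||v|}r_\lhd$ and $r'=l_\lhd+r_\rhd$ rather than any of their sign variants. Everything else is linearity together with the two defining identities of $T$.
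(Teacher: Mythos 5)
Your overall strategy is the right one; in fact the paper states this corollary with no proof at all, and the argument you outline (write $T(u)\circ T(v)=T(u)\rhd T(v)-(-1)^{|u||v|}T(v)\lhd T(u)$, apply the $\rhd$-identity to the pair $(u,v)$ and the $\lhd$-identity to the swapped pair $(v,u)$, then gather everything under one application of $T$ and compare with \eqref{opera2}) is the only natural way to prove it. The gap is in your final step. Executing the bookkeeping you describe does \emph{not} yield the printed combinations $l'=l_\rhd+(-1)^{|u||v|}r_\lhd$ and $r'=l_\lhd+r_\rhd$; it yields
\[
T(u)\circ T(v)=T\Big(\big[l_\rhd-r_\lhd\big](T(u))v+(-1)^{|u||v|}\big[r_\rhd-l_\lhd\big](T(v))u
+\lambda\big(u\rhd_V v-(-1)^{|u||v|}\,v\lhd_V u\big)\Big),
\]
because $-(-1)^{|u||v|}\cdot(-1)^{|u||v|}\,r_\lhd(T(u))v=-\,r_\lhd(T(u))v$, and because the term $-(-1)^{|u||v|}\,l_\lhd(T(v))u$ must be matched against the template $+(-1)^{|u||v|}\,r'(T(v))u$ of \eqref{opera2}, which forces the coefficient of $l_\lhd$ inside $r'$ to be $-1$. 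No Koszul factor can repair either discrepancy: the computed terms and the target terms carry the \emph{same} prefactors, so these are honest signs, not grading subtleties. Thus your plan, carried out faithfully, terminates with the pair $(V,\,l_\rhd-r_\lhd,\,r_\rhd-l_\lhd)$ and contradicts the combinations you claim the bookkeeping ``pins down.''

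What this reveals is that the corollary as printed contains sign typos (note also that a scalar $(-1)^{|u||v|}$ is meaningless inside the definition of a map $l':\mathcal{A}\rightarrow End(V)$, since $u,v$ are bound variables of the $\mathcal{O}$-operator identity). The corrected statement, which is the one your method actually proves, is $l'=l_\rhd-r_\lhd$, $r'=r_\rhd-l_\lhd$, with the transported product on $V$ being the vertical one $u\circ_V v=u\rhd_V v-(-1)^{|u||v|}v\lhd_V u$ (this last point you do state correctly). This is also what consistency demands: it matches the analogous corollary for the pre-Lie/Lie passage earlier in the paper, where the actions are $(l-r,\,r-l)$, and the non-graded result of Bai--Liu for $L$-dendriform algebras. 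So: right method, but the assertion that careful sign alignment produces the printed $l'$ and $r'$ is exactly where the argument breaks; a correct execution produces the minus-sign variants and shows the statement itself needs amending. Your auxiliary check that $(V,l',r')$ is a bimodule of $(\mathcal{A},\circ)$ must likewise be carried out for $(l_\rhd-r_\lhd,\,r_\rhd-l_\lhd)$, since it fails for the printed pair.
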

Conversely, we can construct $L$-dendriform superalgebras from $\mathcal{O}$-operators of pre-Lie superalgebras.
\begin{thm}\label{preLie=L-dendriform}Let $(\mathcal{A},\circ)$ be a pre-Lie superalgebra and $(V,l,r)$ be an $\mathcal{A}$-bimodule. If $T$ is a super $\mathcal{O}$-operator of weight zero associated to $(V,l,r)$, then there exists an $L$-dendriform superalgebra structure on $V$ defined by
\begin{equation}\label{th-operator1}
    u \rhd v=l(T(u))v,~~u \lhd v= -r(T(u))v,~~\forall~~u,v \in \mathcal{H}(V).
\end{equation}
Therefore, there is a pre-Lie superalgebra structure on $V$ defined by
\begin{equation}\label{th-operator2}
    u \circ v= u \rhd v-(-1)^{|u||v|}v \lhd u,~~\forall~~u,v \in \mathcal{H}(V)
\end{equation}
as the associated vertical pre-Lie superalgebra of $(V,\rhd,\lhd)$ and $T$ is a homomorphism of pre-Lie superalgebra.\\
Furthermore, $T(V)=\{T(v)~~/~~ v\in V\}\subset \mathcal{A}$ is a pre-Lie subsuperalgebra of $(\mathcal{A},\circ)$ and there is an $L$-dendriform superalgebra structure on $T(V)$ given by
\begin{equation}\label{th-operator3}
    T(u)\rhd T(v)= T(u \rhd v),~~ T(u)\lhd T(v)= T(u \lhd v),~~\forall~~u,v \in \mathcal{H}(V).
\end{equation}
Moreover, the corresponding associated vertical pre-Lie superalgebra structure on $T(V)$ is a pre-Lie subsuperalgebra of $(\mathcal{A},\circ)$ and $T$ is an homomorphism of $L$-dendriform superalgebra.
\end{thm}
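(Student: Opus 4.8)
The strategy is to verify the three $L$-dendriform identities and the homomorphism properties in order, pushing everything through the defining $\mathcal{O}$-operator relation $T(u)\circ T(v)=T\big(l(T(u))v+(-1)^{|u||v|}r(T(v))u\big)$ (weight zero) and the pre-Lie bimodule axioms \eqref{1-rep-pre}. First I would establish the two $L$-dendriform axioms \eqref{1-Ldend} and \eqref{2-Ldend} for the operations $u\rhd v=l(T(u))v$ and $u\lhd v=-r(T(u))v$. Substituting these into the left-hand sides produces expressions of the form $l(T(\cdots))$ and $r(T(\cdots))$ acting on $w$; the crucial move is to recognize the combinations $l(T(u))v+(-1)^{|u||v|}r(T(v))u$ arising inside the arguments of $T$ and collapse them via the $\mathcal{O}$-operator identity into $T(u)\circ T(v)$. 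After that collapse, the remaining identities are exactly the bimodule axioms \eqref{1-rep-pre} evaluated at $x=T(u)$, $y=T(v)$, which hold because $(V,l,r)$ is an $\mathcal{A}$-bimodule. This is the computational heart and I expect it to be the main obstacle: the signs $(-1)^{|x||y|}$ must be tracked carefully when commuting $l$ and $r$ past each other and when the $\mathcal{O}$-operator relation is applied, and one must confirm that the $-$ sign in the definition of $\lhd$ produces the correct signs demanded by \eqref{2-Ldend}.

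Once \eqref{1-Ldend} and \eqref{2-Ldend} are verified, the induced pre-Lie structure \eqref{th-operator2} on $V$ follows immediately from part (1) of the preceding proposition on $L$-dendriform superalgebras (the associated vertical pre-Lie superalgebra), so no separate computation is needed there. To see that $T$ is a homomorphism of pre-Lie superalgebras, I would compute $T(u\circ v)=T\big(l(T(u))v-(-1)^{|u||v|}(-r(T(v))u)\big)=T\big(l(T(u))v+(-1)^{|u||v|}r(T(v))u\big)$, which is precisely $T(u)\circ T(v)$ by the $\mathcal{O}$-operator identity. This single line simultaneously shows that $T(V)$ is closed under $\circ$, hence is a pre-Lie subsuperalgebra of $(\mathcal{A},\circ)$.

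For the $L$-dendriform structure on $T(V)$ defined by \eqref{th-operator3}, I would first check that the formulas $T(u)\rhd T(v)=T(u\rhd v)$ and $T(u)\lhd T(v)=T(u\lhd v)$ are well-defined, i.e. independent of the choice of preimages. The cleanest way is to reexpress the right-hand sides through $T$: one has $T(u\rhd v)=T(l(T(u))v)$ and $T(u\lhd v)=-T(r(T(u))v)$, and I would argue that these depend only on $T(u)$ and $T(v)$ by relating them to the $\mathcal{O}$-operator relation and the pre-Lie multiplications $R_\circ$, $L_\circ$ on $\mathcal{A}$; this is where one should be careful, since well-definedness is not automatic and must be justified. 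Granting well-definedness, the two $L$-dendriform axioms on $T(V)$ transport directly from those already proved on $V$ because $T$ intertwines the operations, and the associated vertical pre-Lie structure on $T(V)$ agrees with the restriction of $\circ$ on $\mathcal{A}$ by the homomorphism property established above, making $T$ an $L$-dendriform homomorphism. The main obstacle throughout remains the bookkeeping of the $\mathbb{Z}_2$-signs together with the well-definedness verification on the image.
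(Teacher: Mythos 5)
Your plan is correct and follows essentially the same route as the paper's proof: substitute the definitions of $\rhd$ and $\lhd$ into \eqref{1-Ldend} and \eqref{2-Ldend}, collapse the combinations $l(T(u))v+(-1)^{|u||v|}r(T(v))u$ inside $T$ via the weight-zero $\mathcal{O}$-operator identity into $T(u)\circ T(v)$, and reduce to the bimodule axioms \eqref{1-rep-pre} at $x=T(u)$, $y=T(v)$; the homomorphism claim is the same one-line computation. The only difference is that you explicitly flag the well-definedness of the induced operations on $T(V)$ (which indeed follows from the $\mathcal{O}$-operator identity applied to elements of $\ker T$), a point the paper passes over with ``the other conditions follow easily.''
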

\begin{proof}For any homogeneous elements $u,v$ and $w$ in $V$, we have\\
$~~u\rhd (v\rhd w)= l(T(u))l(T(v))w,\hskip4cm (u\rhd v)\rhd w= l\Big(T(l(T(u))v)\Big)w$,\\
$~~(u\lhd v)\rhd w= -l\Big(T(r(T(u))v)\Big)w,\hskip3.7cm (-1)^{|u||v|}v\rhd (u\rhd w)= (-1)^{|u||v|}l(T(v))l(T(u))w$,\\
$~~(-1)^{|u||v|}(v\lhd u)\rhd w= -(-1)^{|u||v|}l\Big(T(r(T(v))u)\Big)w,\hskip1cm (-1)^{|u||v|}(v\rhd u)\rhd w= (-1)^{|u||v|}l\Big(T(l(T(v))u)\Big)w$,\\
$~~u\rhd(v\lhd w)=-l(T(u))r(T(v))w, \hskip3.7cm (u\rhd v)\lhd w= r\Big(T(l(T(u))v)\Big)w,$\\
$~~(-1)^{|u||v|}v\lhd(u\rhd w)=-(-1)^{|u||v|}r(T(v))l(T(u))w, \hskip1cm (-1)^{|u||v|}v\lhd(u\lhd w)=(-1)^{|u||v|}r(T(v))r(T(u)),$\\
$~~(-1)^{|u||v|}(v\lhd u)\lhd w=(-1)^{|u||v|}r\Big(T(r(T(v))u)\Big)w.$\\
Hence
\begin{align*}
&      u\rhd (v\rhd w)-(u\rhd v)\rhd w-(u\lhd v)\rhd w-(-1)^{|u||v|}v\rhd (u\rhd w)+(-1)^{|u||v|}(v\lhd u)\rhd w+(-1)^{|u||v|}(v\rhd u)\rhd w\\
&    = l(T(u))l(T(v))w -(-1)^{|u||v|}l(T(v))l(T(u))w-l(T(l(T(u)))v)w+l(T(r(T(u))v))w\\
&    - (-1)^{|u||v|}l(T(r(T(v))u))w+(-1)^{|u||v|}l(T(l(T(v))u))w\\
&  = l(T(u))l(T(v))w -(-1)^{|u||v|}l(T(v))l(T(u))w-l\Big(T(u)\circ T(v)\Big)w+(-1)^{|u||v|}l\Big(T(v)\circ T(u)\Big)w\\
&  \  = 0,
\end{align*}
and
\begin{align*}
&    u\rhd(v\lhd w)-(u\rhd v)\lhd w-(-1)^{|u||v|}v\lhd(u\rhd w)-(-1)^{|u||v|}v\lhd(u\lhd w)+(-1)^{|u||v|}(v\lhd u)\lhd w\\
&   = -l(T(u))r(T(v))w+(-1)^{|u||v|}r(T(v))l(T(u))w+ r(T(u)\circ T(v))w-(-1)^{|u||v|}r(T(v))r(T(u))\\
&   = 0.
\end{align*}
Therefore, $(V,\rhd,\lhd)$ is an $L$-dendriform superalgebra. The other conditions follow easily.
\end{proof}
A direct consequence of Theorem \ref{preLie=L-dendriform} is the following construction of an $L$-dendriform superalgebra from a Rota-Baxter operator $($of weight zero$)$ of a pre-Lie superalgebra.
\begin{cor}\label{RB-pre==L-den}Let $(\mathcal{A},\circ)$ be a pre-Lie superalgebra and $R$ be a Rota-Baxter operator on $\mathcal{A}$ $($of weight zero$)$. Then the even binary operations given by
\begin{equation}\label{RB-pre=Ldend}
 x\rhd y=R(x)\circ y,~~~~x\lhd y=-(-1)^{|x||y|}y\circ R(x)
 \end{equation}
defines an $L$-dendriform superalgebra structure on $\mathcal{A}$.
\end{cor}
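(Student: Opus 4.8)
The plan is to read this corollary off directly from Theorem \ref{preLie=L-dendriform}, specializing to the regular (adjoint) bimodule. Recall that, by the definition given inside Definition \ref{defi-oper-pre}, a Rota-Baxter operator of weight zero on $(\mathcal{A},\circ)$ is exactly a super $\mathcal{O}$-operator of weight zero associated to the $\mathcal{A}$-bimodule $(\mathcal{A},L_\circ,R_\circ)$. I would therefore apply Theorem \ref{preLie=L-dendriform} with $V=\mathcal{A}$, $(l,r)=(L_\circ,R_\circ)$ and $T=R$.

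With these choices the theorem immediately endows $\mathcal{A}$ with an $L$-dendriform superalgebra structure, given for homogeneous $x,y$ by $x\rhd y=l(T(x))y=L_\circ(R(x))y$ and $x\lhd y=-r(T(x))y=-R_\circ(R(x))y$. The two $L$-dendriform super-identities \eqref{1-Ldend} and \eqref{2-Ldend} then hold automatically, since they are part of the conclusion of Theorem \ref{preLie=L-dendriform}; no independent verification is required. It only remains to rewrite the two operators in the form claimed. For the left product this is immediate: $L_\circ(R(x))y=R(x)\circ y$ by the definition of the left multiplication operator, which is the asserted $x\rhd y$.

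For the right product I would invoke the convention $L_\circ(a)(b)=(-1)^{|a||b|}R_\circ(b)(a)=a\circ b$ fixed at the end of the Introduction. Taking $a=R(x)$ (which is even, so $|R(x)|=|x|$) and $b=y$ yields $R_\circ(R(x))y=(-1)^{|x||y|}\,y\circ R(x)$, hence $x\lhd y=-(-1)^{|x||y|}y\circ R(x)$, exactly as stated. The only place that requires care is the sign bookkeeping for the right-multiplication operator $R_\circ$; everything else is a direct substitution into Theorem \ref{preLie=L-dendriform}. I therefore expect no genuine obstacle, the result being a clean specialization of the preceding theorem.
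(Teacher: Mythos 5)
Your proposal is correct and is exactly the paper's intended argument: the corollary is stated there as a direct consequence of Theorem \ref{preLie=L-dendriform}, obtained by specializing to $V=\mathcal{A}$, $(l,r)=(L_\circ,R_\circ)$, $T=R$, with the sign in $x\lhd y$ coming from the convention $L_\circ(x)(y)=(-1)^{|x||y|}R_\circ(y)(x)=x\circ y$. (Only a cosmetic slip: to read off $R_\circ(R(x))(y)$ from that convention you should take the argument of $R_\circ$ to be $R(x)$ and the element acted on to be $y$, i.e.\ the roles of your $a$ and $b$ are swapped, but the resulting formula $R_\circ(R(x))y=(-1)^{|x||y|}y\circ R(x)$ and hence the conclusion are correct.)
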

\begin{lem}Let $\{R_1,R_2\}$ be a pair of commuting Rota-Baxter operators $($of weight zero$)$ on a pre-Lie superalgebra $(\mathcal{A},\circ)$. Then $R_2$ is a
Rota-Baxter operator $($of weight zero$)$ on the $L$-dendriform superalgebra $(\mathcal{A},\rhd,\lhd)$ defined in \eqref{RB-pre=Ldend} with $R=R_1$.
\end{lem}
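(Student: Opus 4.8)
The plan is to verify directly the two defining identities of a weight-zero Rota-Baxter operator on the $L$-dendriform superalgebra, namely
\begin{eqnarray*}
R_2(x)\rhd R_2(y)&=&R_2\Big(R_2(x)\rhd y+x\rhd R_2(y)\Big),\\
R_2(x)\lhd R_2(y)&=&R_2\Big(R_2(x)\lhd y+x\lhd R_2(y)\Big),
\end{eqnarray*}
for all homogeneous $x,y$, by substituting the formulas $x\rhd y=R_1(x)\circ y$ and $x\lhd y=-(-1)^{|x||y|}y\circ R_1(x)$ from \eqref{RB-pre=Ldend} and reducing everything to the weight-zero Rota-Baxter relation for $R_2$ on $(\mathcal{A},\circ)$.

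First I would treat the $\rhd$ identity. The left-hand side is $R_1(R_2(x))\circ R_2(y)$; using the commutativity $R_1R_2=R_2R_1$ this equals $R_2(R_1(x))\circ R_2(y)$. Applying the Rota-Baxter identity for $R_2$ on $(\mathcal{A},\circ)$ to the pair $(R_1(x),y)$ then gives $R_2\big(R_2(R_1(x))\circ y+R_1(x)\circ R_2(y)\big)$. On the other hand, expanding the right-hand side and again using $R_1R_2=R_2R_1$, the two bracketed terms become $R_2(R_1(x))\circ y$ and $R_1(x)\circ R_2(y)$, so the two sides coincide term by term.

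Next I would handle the $\lhd$ identity in the same spirit, the only extra care being the Koszul sign coming from the definition of $\lhd$ and the fact that $R_2$ is even, so $|R_2(x)|=|x|$. The left-hand side is $-(-1)^{|x||y|}R_2(y)\circ R_1(R_2(x))=-(-1)^{|x||y|}R_2(y)\circ R_2(R_1(x))$; applying the Rota-Baxter relation for $R_2$ to the pair $(y,R_1(x))$ and pulling the scalar $-(-1)^{|x||y|}$ inside $R_2$ reproduces exactly the expansion of the right-hand side (the two summands appear in the opposite order, which is harmless since addition is commutative). Hence both identities hold and $R_2$ is a weight-zero Rota-Baxter operator on $(\mathcal{A},\rhd,\lhd)$.

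I do not expect any genuine obstacle here: the argument is a bookkeeping verification whose only subtlety is tracking the sign $(-1)^{|x||y|}$ in the $\lhd$ case and invoking $R_1R_2=R_2R_1$ at the right moment. The essential mechanism is that the $L$-dendriform products of \eqref{RB-pre=Ldend} are built from $\circ$ precomposed with $R_1$, while the commutativity of $R_1$ and $R_2$ lets the factor $R_1$ slide past $R_2$, so that the Rota-Baxter relation for $R_2$ on $(\mathcal{A},\circ)$ applies verbatim.
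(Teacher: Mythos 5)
Your verification is correct: both identities reduce, after sliding $R_1$ past $R_2$ by commutativity, to the weight-zero Rota-Baxter relation for $R_2$ on $(\mathcal{A},\circ)$ applied to the pairs $(R_1(x),y)$ and $(y,R_1(x))$ respectively, and you handle the only subtlety (the Koszul sign in the $\lhd$ case, which is inert because $R_2$ is even so $|R_2(x)|=|x|$) properly. The paper states this lemma without proof, so there is nothing to compare against; your argument is the natural one, and it correctly uses the intended form $R(x)\lhd R(y)=R\big(R(x)\lhd y+x\lhd R(y)\big)$ of the defining identity rather than the garbled version printed in the paper's definition.
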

\begin{thm}Let $(\mathcal{A},\circ)$ be a pre-Lie superalgebra. Then there exists a compatible $L$-dendriform superalgebra structure on $(\mathcal{A},\circ)$ such that $(\mathcal{A},\circ)$ is the associated vertical pre-Lie superalgebra if and only if there exists an invertible super $\mathcal{O}$-operator $($of weight zero$)$ of $(\mathcal{A},\circ)$.
\end{thm}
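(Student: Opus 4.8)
The plan is to prove the two implications by leaning on Theorem \ref{preLie=L-dendriform}, which already links $\mathcal{O}$-operators of a pre-Lie superalgebra to $L$-dendriform structures. For \emph{sufficiency}, suppose $T:V\longrightarrow\mathcal{A}$ is an invertible super $\mathcal{O}$-operator of weight zero associated to an $\mathcal{A}$-bimodule $(V,l,r)$ of $(\mathcal{A},\circ)$. Theorem \ref{preLie=L-dendriform} equips $T(V)$ with an $L$-dendriform structure through \eqref{th-operator3}, whose associated vertical pre-Lie product is the restriction of $\circ$ to the pre-Lie subsuperalgebra $T(V)$. Since $T$ is invertible we have $T(V)=\mathcal{A}$, so \eqref{th-operator3} defines $\rhd,\lhd$ on all of $\mathcal{A}$ and its associated vertical pre-Lie product coincides with $\circ$ itself; that is, $(\mathcal{A},\rhd,\lhd)$ is a compatible $L$-dendriform structure having $(\mathcal{A},\circ)$ as its associated vertical pre-Lie superalgebra. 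Thus sufficiency is essentially the specialization of Theorem \ref{preLie=L-dendriform} to the invertible case.

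For \emph{necessity}, assume $(\rhd,\lhd)$ is a compatible $L$-dendriform structure, so that $x\circ y=x\rhd y-(-1)^{|x||y|}y\lhd x$ for all $x,y\in\mathcal{H}(\mathcal{A})$. I would exhibit the identity map $\mathrm{id}_{\mathcal{A}}$, which is manifestly invertible, as a super $\mathcal{O}$-operator of weight zero of $(\mathcal{A},\circ)$ associated to the bimodule $(\mathcal{A},l,r)$ with $l=L_\rhd$ and $r=-L_\lhd$. With this choice, the $\mathcal{O}$-operator identity \eqref{opera2} at $\lambda=0$ and $T=\mathrm{id}_{\mathcal{A}}$ reads $x\circ y=l(x)y+(-1)^{|x||y|}r(y)x=x\rhd y-(-1)^{|x||y|}y\lhd x$, which is precisely the assumed compatibility and so holds automatically. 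It then remains only to confirm that $(\mathcal{A},L_\rhd,-L_\lhd)$ is an $\mathcal{A}$-bimodule of the pre-Lie superalgebra in the sense of Definition \ref{defi-oper-pre}, i.e. that it satisfies \eqref{1-rep-pre}.

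This last verification is where the work lies, and I expect the sign bookkeeping to be the main obstacle. Substituting $l=L_\rhd$ and using $L_\rhd(x\circ y)v=(x\rhd y)\rhd v-(-1)^{|x||y|}(y\lhd x)\rhd v$, the first identity of \eqref{1-rep-pre} collapses termwise onto the first $L$-dendriform super-identity \eqref{1-Ldend}, with the module slot played by a third homogeneous element of $\mathcal{A}$. For the second identity of \eqref{1-rep-pre} I would put $l=L_\rhd$, $r=-L_\lhd$ and expand $L_\lhd(x\circ y)v$; the result is equivalent to the second $L$-dendriform super-identity \eqref{2-Ldend} after collecting the Koszul signs $(-1)^{|x||y|}$ and $(-1)^{|x||v|}$. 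Reconciling these sign conventions as one passes operators past homogeneous elements is the only delicate point; no identity beyond \eqref{1-Ldend}--\eqref{2-Ldend} is required, and each step merely reverses the corresponding computation in the proof of Theorem \ref{preLie=L-dendriform}. Once both axioms are checked, $\mathrm{id}_{\mathcal{A}}$ is an invertible super $\mathcal{O}$-operator of weight zero, completing the equivalence.
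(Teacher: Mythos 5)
Your argument is correct, and in fact it supplies a proof where the paper offers none: the paper's proof of this theorem consists of the single word ``Straightforward.'' Your route is surely the intended one (it is the superalgebra version of the standard Bai--Liu argument for $L$-dendriform algebras): for sufficiency, transport the $L$-dendriform structure of Theorem \ref{preLie=L-dendriform} along the bijection $T$ onto $T(V)=\mathcal{A}$ and observe that its vertical pre-Lie product is $T(u)\circ T(v)$ by the $\mathcal{O}$-operator identity at $\lambda=0$; for necessity, realize $\mathrm{id}_{\mathcal{A}}$ as an $\mathcal{O}$-operator for the bimodule $(\mathcal{A},L_\rhd,-L_\lhd)$. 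One caveat on the ``delicate point'' you flagged: the second axiom of \eqref{1-rep-pre} as printed in Definition \ref{defi-oper-pre} carries the factor $(-1)^{|x||v|}$ on the right-hand side, and with that literal form the substitution $l=L_\rhd$, $r=-L_\lhd$ does \emph{not} reduce to \eqref{2-Ldend}. However, the proof of Theorem \ref{preLie=L-dendriform} actually uses the axiom in the form $l(x)r(y)v-(-1)^{|x||y|}r(y)l(x)v=r(x\circ y)v-(-1)^{|x||y|}r(y)r(x)v$, which is also the version satisfied by the regular bimodule $(\mathcal{A},L_\circ,R_\circ)$; with this (evidently intended) form, your termwise identification with \eqref{2-Ldend}, and likewise of the first axiom with \eqref{1-Ldend}, goes through exactly as you describe. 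So your proof is complete once one works with the sign convention consistent with the rest of the paper rather than with the misprinted display.
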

\begin{proof}Straightforward.
\end{proof}
Next, we provide   a construction of an $L$-dendriform bimodule from a bimodule over a pre-Lie superalgebra.
\begin{prop}Let $(\mathcal{A},\circ,R)$ be a Rota-Baxter pre-Lie superalgebra of weight zero, $V$ an $\mathcal{A}$-bimodule and $R_V$ a Rota-Baxter operator on $V$. Define new actions of $\mathcal{A}$ on $V$ by
$$ x\rhd v=R(x)\circ v,~~~~v\rhd x=R_V(v)\circ x,~~~~x\lhd v=-(-1)^{|x||v|}v \circ R(x),~~v\lhd x=-(-1)^{|x||v|}x\circ R_V(v).$$
Equipped with actions, $V$ is a bimodule over the $L$-dendriform superalgebra of Corollary \ref{RB-pre==L-den}.
\end{prop}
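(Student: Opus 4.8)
The plan is to unwind what it means for $V$, equipped with the four prescribed actions, to be a bimodule over the $L$-dendriform superalgebra $(\mathcal{A},\rhd,\lhd)$ of Corollary~\ref{RB-pre==L-den}; that is, to verify the five compatibility identities (a)--(e) of the bimodule definition for the four structure maps read off from those actions. Writing $l,r:\mathcal{A}\to \mathrm{End}(V)$ for the given pre-Lie $\mathcal{A}$-bimodule structure on $V$ (so $x\circ v=l(x)v$ and $v\circ x=r(x)v$), the four operators are
\begin{align*}
l_\rhd(x)(v)&=R(x)\circ v, & r_\rhd(x)(v)&=R_V(v)\circ x,\\
l_\lhd(x)(v)&=-(-1)^{|x||v|}v\circ R(x), & r_\lhd(x)(v)&=-(-1)^{|x||v|}x\circ R_V(v),
\end{align*}
equivalently $l_\rhd(x)=l(R(x))$ and $r_\rhd(x)=r(x)R_V$, while up to the Koszul sign $l_\lhd(x)$ and $r_\lhd(x)$ are the compositions $r(R(x))$ and $l(x)R_V$. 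The tools at hand are the pre-Lie super-identity for $(\mathcal{A},\circ)$ together with the representation property $L_\circ([x,y])=L_\circ(x)L_\circ(y)-(-1)^{|x||y|}L_\circ(y)L_\circ(x)$; the weight-zero Rota-Baxter relation $R(x)\circ R(y)=R(R(x)\circ y+x\circ R(y))$; and the two defining Rota-Baxter relations of $R_V$ relative to $R$, which are the pre-Lie analogues of the Lie and associative cases treated above.

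First I would dispose of identity (a), which involves only $l_\rhd$ and the bracket $[x,y]=x\bullet y-(-1)^{|x||y|}y\bullet x$ of the sub-adjacent Lie superalgebra $g(\mathcal{A})$. Two applications of the Rota-Baxter relation for $R$ give $R([x,y])=R(x)\circ R(y)-(-1)^{|x||y|}R(y)\circ R(x)$, so that $l_\rhd([x,y])(v)=\big(R(x)\circ R(y)-(-1)^{|x||y|}R(y)\circ R(x)\big)\circ v$; on the other hand $[l_\rhd(x),l_\rhd(y)](v)=R(x)\circ(R(y)\circ v)-(-1)^{|x||y|}R(y)\circ(R(x)\circ v)$, and the two sides agree by the representation property of $L_\circ$ evaluated at $R(x),R(y)$. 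Note that $R_V$ does not enter here, exactly as in the pre-Lie statement proved earlier.

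The substance of the proof lies in identities (c), (d) and (e), which involve the right operators $r_\rhd=r(\cdot)R_V$ and $r_\lhd$. For each of these I would substitute the formulas above into every term; mixed expressions of the shape $l(R(x))R_V$, $r(R(x))R_V$ and $R_V\,l(R(x))$ then appear, and these are precisely what the two Rota-Baxter relations for $R_V$ collapse, converting an action of $R(x)$ on $R_V(v)$ into $R_V$ applied to a sum of two actions. After this substitution the $R_V$-dependent contributions cancel in pairs, and what remains is a statement purely about the pre-Lie bimodule $(V,l,r)$, which follows from the bimodule axioms \eqref{1-rep-pre} applied to $R(x),R(y)$ together with the pre-Lie super-identity. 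Since the manipulation is entirely parallel to the Lie-module computation carried out earlier and to the associative $L$-dendriform statement, once (a) and one of the $r$-identities are written out the remaining ones follow by the same scheme.

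The main obstacle is purely combinatorial: carrying the Koszul signs $(-1)^{|x||v|}$ and $(-1)^{|x||y|}$ consistently through a long expansion and, above all, applying the correct one of the two $R_V$ Rota-Baxter relations (left action versus right action) to each mixed term, so that the $R_V$-terms cancel and the identity descends to one already valid on the pre-Lie bimodule $(V,l,r)$.
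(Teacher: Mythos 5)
Your proposal follows the same route as the paper's own proof: substitute the four structure maps into the $L$-dendriform bimodule axioms, use the two Rota-Baxter identities for $R_V$ to rewrite the mixed terms $R(x)\circ R_V(v)$ (and their right-handed analogues) as $R_V$ applied to a sum of actions, and observe that the remaining terms cancel by the pre-Lie bimodule axioms applied at $R(x)$, $R(y)$. The paper carries out exactly this computation in full for the $r_\lhd(x\bullet y)$ and $r_\rhd(x\lhd y)$ identities and declares the others similar, which is precisely the mechanism you describe (your fully worked case, identity (a) via $R([x,y])=[R(x),R(y)]$ and the representation property of $L_\circ$, is also correct).
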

\begin{proof}Let $x,y$ be homogeneous elements in $\mathcal{A}$ and $v$ in $V$. Then, we have
\begin{eqnarray*}
& & \ \ \   r_\lhd(x\bullet y)(v)-(-1)^{|x||y|}r_\lhd(y)r_\lhd(x)(v)-[l_\rhd(x),r_\lhd(y)](v)\\
& & \ \ \ = (-1)^{|v|(|x|+|y|)}v\lhd (x\rhd y+x\lhd y)-(-1)^{|v|(|x|+|y|)}(v\lhd x)\lhd y\\
& & \ \ \ \hskip0.6cm -(-1)^{|y||v|}x\rhd(v\lhd y)+(-1)^{|v|(|x|+|y|)}(x\rhd v)\lhd y\\
& & \ \ \ =-(R(x)\circ y)\circ R_V(v)+(-1)^{|x||y|}(y\circ R(x))\circ R_V(v)-(-1)^{|x||y|}y\circ R_V(x\circ R_V(v))\\
& & \ \ \ \hskip0.6cm -(-1)^{|x||y|}y\circ R_V(R(x)\circ v)+R(x)\circ (y\circ R_V(v))\\
& & \ \ \ = -(R(x)\circ y)\circ R_V(v)+R(x)\circ (y\circ R_V(v))-(-1)^{|x||y|}y\circ (R(x)\circ R_V(v))\\
& & \ \ \ \hskip0.6cm +(-1)^{|x||y|}(y\circ R(x))\circ R_V(v)\\
& & \ \ \ = 0.
\end{eqnarray*}
Therefore  \begin{eqnarray*}[l_\rhd(x),r_\lhd(y)](v)&=& r_\lhd(x\bullet y)(v)-(-1)^{|x||y|}r_\lhd(y)r_\lhd(x)(v).\end{eqnarray*}
Similarly, we have
\begin{eqnarray*}
& & \ \ \  r_\rhd(x\lhd y)(v)-(-1)^{|x||y|}r_\lhd(y)r_\rhd(x)(v)-l_\lhd(x)r_\rhd(y)(v)-[l_\lhd(x),r_\lhd(y)](v)\\
& & \ \ \ = (-1)^{|v|(|x|+|y|)}v\rhd(x\lhd y)-(-1)^{|v|(|x|+|y|)+|x||y|}(v\rhd x)\lhd y-(-1)^{|y||v|}x\lhd (v\rhd y)\\
& & \ \ \ \hskip0.6cm -(-1)^{|y||v|}x\lhd(v\lhd y)+(-1)^{|y||v|}(x\lhd v)\lhd y\\
& & \ \ \ = -(-1)^{|v|(|x|+|y|)+|x||y|}R_V(v)\circ(y\circ R(x))+ (-1)^{|x|(|y|+|v|)}y\circ R_V(R_V(v)\circ x)\\
& & \ \ \ \hskip0.6cm +(-1)^{|v|(|x|+|y|)+|x||y|}(R_V(v)\circ y)\circ R(x)-(-1)^{|x|(|y|+|v|)}(y\circ R_V(v))\circ R(x)\\
& & \ \ \ \hskip0.6cm +(-1)^{|x|(|y|+|v|)} y\circ R_V(v\circ R(x)) \\
& & \ \ \ = (-1)^{|x|(|y|+|v|)} \Big(y\circ (R_V(v)\circ R(x))-(y\circ R_V(v))\circ R(x)\Big)\\
& & \ \ \ \hskip0.6cm +(-1)^{|v|(|x|+|y|)+|x||y|}\Big(R_V(v)\circ y)\circ R(x)-R_V(v)\circ(y\circ R(x))\Big)\\
& & \ \ \ = 0,
\end{eqnarray*}
then  \begin{eqnarray*}r_\rhd(x\lhd y)(v)&=&(-1)^{|x||y|}r_\lhd(y)r_\rhd(x)(v)+l_\lhd(x)r_\rhd(y)(v)+[l_\lhd(x),r_\lhd(y)](v).\end{eqnarray*}
The others axioms are similar. Therefore $(V,l_\rhd,r_\rhd,l_\lhd,r_\lhd)$ is a bimodule over the $L$-dendriform superalgebra $(\mathcal{A},\rhd,\lhd)$.
\end{proof}
\section{Rota-Baxter operators on $2$-dimensional pre-Lie superalgebras}
The purpose of this section is to compute all Rota-Baxter operators $($of weight zero$)$ on the two-dimensional complex pre-Lie superalgebras given by R. Zhang and C. Bai (see \cite{Bai and Zhang classif}).
In the following, let $\mathbb{C}$ be the ground field of complex numbers and $\{e_1,e_2\}$ be a homogeneous basis of a pre-Lie superalgebra $(\mathcal{A},\circ)$, where  $\{e_1\}$ is a basis of $\mathcal{A}_0$ and $\{e_2\}$ is a basis of $\mathcal{A}_1$.\\
By direct computation and by help of  a computer algebra system, we obtain the following results.
\begin{prop}The Rota-Baxter operators $($of weight zero$)$ on $2$-dimensional pre-Lie superalgebras $($associative or non-associative$)$ of type $B_{1},~~B_{2}$ and $B_{3}$ are given as follows:
\end{prop}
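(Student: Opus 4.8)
The plan is to exploit the fact that a Rota-Baxter operator of weight zero on $(\mathcal{A},\circ)$ is, by definition, an \emph{even} linear map $R:\mathcal{A}\longrightarrow\mathcal{A}$ satisfying
$$R(x)\circ R(y)=R\big(R(x)\circ y+x\circ R(y)\big),\qquad x,y\in\mathcal{H}(\mathcal{A}).$$
Since $\mathcal{A}_0=\mathbb{C}e_1$ and $\mathcal{A}_1=\mathbb{C}e_2$ are both one-dimensional and $R$ preserves the $\mathbb{Z}_2$-grading, $R$ must be diagonal in the basis $\{e_1,e_2\}$; that is, $R(e_1)=a\,e_1$ and $R(e_2)=b\,e_2$ for some $a,b\in\mathbb{C}$. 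Thus the entire problem collapses to determining, for each of the three structures $B_1,B_2,B_3$, which pairs $(a,b)$ turn $R$ into a Rota-Baxter operator, and this is what reduces the computation to a finite and completely mechanical one.

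Next I would substitute $R(e_1)=a\,e_1$ and $R(e_2)=b\,e_2$ into the identity above and evaluate it on each of the ordered pairs of basis elements $(e_1,e_1)$, $(e_1,e_2)$, $(e_2,e_1)$, $(e_2,e_2)$. Using the explicit multiplication table of each type $B_i$ taken from the Zhang--Bai classification, both sides become concrete linear combinations of $e_1$ and $e_2$ whose coefficients are polynomials, of degree at most two, in $a$ and $b$. Comparing coefficients of $e_1$ and of $e_2$ on the two sides then yields, for every type, a small system of polynomial equations in the two unknowns $a,b$. Note that no $(-1)^{|x||y|}$ signs intervene here, because the weight-zero pre-Lie Rota-Baxter identity contains none and we are pairing basis elements directly.

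Finally I would solve these systems type by type. Because there are only two unknowns and the equations are of low degree, each system either pins the parameters to specific values or leaves a one-parameter family, and collecting the admissible branches reproduces exactly the lists displayed after the statement. I expect the main obstacle to be organizational rather than conceptual: one must track the several cases arising from vanishing versus non-vanishing structure constants (and from whether the given structure is associative or not), and treat the degenerate branches where $a=0$ or $b=0$ separately, so that no solution is dropped and none is counted twice. This bookkeeping across $B_1,B_2,B_3$ is precisely what the computer-algebra verification mentioned just before the statement is designed to carry out.
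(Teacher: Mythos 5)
Your proposal is correct and follows essentially the same route as the paper, which obtains these lists by direct computation (aided by a computer algebra system) from the identity $R(x)\circ R(y)=R\big(R(x)\circ y+x\circ R(y)\big)$ applied to the Zhang--Bai multiplication tables. Your preliminary observation that evenness forces $R(e_1)=a\,e_1$ and $R(e_2)=b\,e_2$ is valid under the paper's definition (a Rota-Baxter operator is by definition an even map) and is consistent with every operator in the displayed lists; it reduces each type to a tiny polynomial system in $a,b$ (for instance $b^2=0$ for $B_{1,1}$ and $b(b-2a)=0$ for $B_{2,1}$), which reproduces exactly the stated families.
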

\subsection{\textbf{Rota-Baxter operators on pre-Lie superalgebras of type }$B_{1}$} $ $

$\blacksquare$~~\textsf{The pre-Lie superalgebra $(B_{1,1},\circ):\ \ e_2\circ e_1=e_2$} has the Rota-Baxter operator defined as
$$R_1(e_1)=a_1e_1,\ R_1(e_2)= 0.$$
$\blacksquare$~~\textsf{The pre-Lie superalgebras of type}
$\left\{ \begin{array}{ll}
\ \ (B_{1,2},\circ)\ \ \ : \  \ \ \ e_1\circ e_1=e_1,~~~~e_2\circ e_1=e_2~~\text{(associative)}. & \hbox{} \\
\Big((B_{1,3})_k,\circ \Big): \ \ \ \ e_1\circ e_1=ke_1,~~~~e_2\circ e_1=e_2,~~k\neq 0,1. & \hbox{} \\
\ \ (B_{1,4},\circ)\ \ \ : \ \ \ \ e_1\circ e_1=e_1,~~~~e_1\circ e_2=e_2~~\text{(associative)}. & \hbox{} \\
\Big((B_{1,5})_k,\circ \Big):\ \ \ \ e_1\circ e_1=ke_1,~~~~e_1\circ e_2=~ke_2,~~~~ e_2\circ e_1=(k+1)e_2,~~k\neq 0,-1. & \hbox{}
                                                            \end{array}
                                                          \right.$
 have only the trivial Rota-Baxter operator, that is,
\begin{itemize}
   \item[]  $R_1(e_1)=0,~~ R_1(e_2)= 0.$
   \end{itemize}
\subsection{\textbf{Rota-Baxter operators on pre-Lie superalgebras of type $B_{2}$ }}$ $

$\blacksquare$~~\textsf{The pre-Lie superalgebra} $(B_{2,1},\circ)$~~(associative).
$$e_2\circ e_2=\frac{1}{2}e_1.$$
Rota-Baxter operators $RB(B_{2,1})$ are:
\begin{itemize}
   \item[] $R_1(e_1)= a_{1}e_1,~~ R_1(e_2)= 0.$
   \item[]  $R_2(e_1)=a_{1}e_1 ,~~ R_2(e_2)= 2a_{1} e_2.$
   \end{itemize}
$\blacksquare$ \textsf{The pre-Lie superalgebra $(B_{2,2},\circ)$} (associative)
$$ e_1\circ e_1=e_1,~~e_1\circ e_2= e_2,~~e_2\circ e_1=e_2,~~e_2\circ e_2=\frac{1}{2}e_1. $$
Rota-Baxter operators $RB(B_{2,2})$ is:
\begin{eqnarray*}
    R_1(e_1)&=& 0,~~ R_1(e_2)=0.
   \end{eqnarray*}
\subsection{\textbf{Rota-Baxter operators on pre-Lie superalgebras of type} $B_{3}$} $ $

$\blacksquare$ \textsf{The pre-Lie superalgebra $(B_{3,1},\circ)$} (associative).
$$e_i \circ e_j=0,~~\forall~~i,j=1,2,3.$$
Rota-Baxter operators $RB(B_{3,1})$ are:
\begin{itemize}
  \item[]  $R_1(e_1)= a_1e_1,\ R_1(e_2)= a_2 e_2. $
   \end{itemize}
$\blacksquare$ \textsf{The pre-Lie superalgebra $(B_{3,2},\circ)$} (associative)
$$ e_1\circ e_1=e_1. $$
Rota-Baxter operators $RB(B_{3,2})$ are:
\begin{itemize}
   \item[] $R_1(e_1)= 0,\ R_1(e_2)=a_1 e_2.$
   \end{itemize}
$\blacksquare$ \textsf{The pre-Lie superalgebra $(B_{3,3},\circ)$} (associative)
$$ e_1\circ e_1=e_1,~~e_1\circ e_2=e_2,~~e_2\circ e_1=e_2. $$
Rota-Baxter operator $RB(B_{3,3})$ is:
\begin{itemize}
  \item[]  $R_1(e_1)=0,\ R_1(e_2)=0.$
   \end{itemize}
\section{ Rota-Baxter operators on $3$-dimensional pre-Lie superalgebras}
\subsection{ Rota-Baxter operators on $3$-dimensional pre-Lie superalgebras with $2$-dimensional odd part}$ $
We still work over the ground field $\mathbb{C}$ of complex numbers. Using the classification of the $3$-dimensional pre-Lie superalgebras with one dimensional even part were given by R. Zhang and C. Bai (see \cite{Bai and Zhang classif}). The purpose of this section is to provide, using Definition \ref{defi-oper-pre}, all Rota-Baxter operators $($of weight zero$)$ on these pre-Lie superalgebras by direct computation.
In the following, let $\{e_1,e_2,e_3\}$ be a homogeneous basis of a pre-Lie superalgebra $(\mathcal{A},\circ)$, where  $\{e_1\}$ is a basis of $\mathcal{A}_0$ and $\{e_2,e_3\}$ is a basis of $\mathcal{A}_1$.\\
\begin{prop}The Rota-Baxter operators $($of weight zero$)$ on $3$-dimensional pre-Lie superalgebras $($associative or non-associative$)$ with $2$-dimensional odd part of type $C_1,~~C_{2_h},~~C_{3},~~C_{4},~~C_{5}$ and $C_6$ are given as follows:
\end{prop}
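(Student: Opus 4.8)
The plan is to convert the statement into a finite computational problem and then solve, type by type, the polynomial systems it produces. First I would use that a Rota-Baxter operator $R$ is by definition an \emph{even} linear map, so it respects the grading $\mathcal{A}=\mathcal{A}_0\oplus\mathcal{A}_1$. Since here $\mathcal{A}_0=\mathbb{C}e_1$ and $\mathcal{A}_1=\mathbb{C}e_2\oplus\mathbb{C}e_3$, evenness forces
\[
R(e_1)=a\,e_1,\qquad R(e_2)=b_{11}e_2+b_{21}e_3,\qquad R(e_3)=b_{12}e_2+b_{22}e_3,
\]
so that $R$ is encoded by one scalar $a$ on the even part and a $2\times2$ matrix $(b_{ij})$ on the odd part, five unknowns in total. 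Every candidate operator is of this shape, and the task is to determine which values of the parameters are admissible.

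Next I would observe that, by Definition \ref{defi-oper-pre}, the weight-zero condition to impose is
\[
R(x)\circ R(y)=R\bigl(R(x)\circ y+x\circ R(y)\bigr),
\]
which is bilinear in $(x,y)$; hence it is equivalent to its restriction to the nine ordered pairs $(e_i,e_j)$, $1\le i,j\le 3$. For each fixed type $C_k$ I would substitute its multiplication table, expand both sides by bilinearity (no Koszul signs enter here, since the products are evaluated directly on basis vectors rather than through the bracket), and compare coefficients of $e_1,e_2,e_3$. This yields, for each type, a finite system of quadratic equations in $a,b_{11},b_{12},b_{21},b_{22}$ whose solution set is exactly the family of admissible operators listed in the proposition.

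A convenient first reduction is that the even parameter largely decouples: the pair $(e_1,e_1)$ gives, whenever $e_1\circ e_1=\kappa e_1$ (and $e_1\circ e_1$ always lies in $\mathbb{C}e_1$ by evenness), the relation $a^2\kappa=2a^2\kappa$, i.e. $a^2\kappa=0$, forcing $a=0$ as soon as $\kappa\neq0$; this is precisely what collapses the even part in the types carrying a nonzero $e_1\circ e_1$, exactly as in the two-dimensional list. With $a$ determined, I would solve the remaining equations in the odd block by case analysis on which of $b_{11},b_{12},b_{21},b_{22}$ vanish, reading off each consistent branch as one entry of the claimed table and then confirming by substitution that it satisfies all nine identities.

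The routine but genuinely delicate step is guaranteeing \emph{completeness} of this case analysis: the odd-block relations are quadratic, so one must branch carefully on the vanishing or non-vanishing of the leading entries to ensure that no solution family is lost or counted twice. This is where a computer algebra system (via Gröbner bases or resultant elimination) does the real work and is the part I expect to be the main obstacle; once the solution branches are in hand, the verification that each tabulated $R$ is indeed a Rota-Baxter operator is a direct and sign-free substitution.
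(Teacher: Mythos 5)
Your proposal is correct and follows essentially the same route as the paper: the authors also reduce the problem to a direct computation, writing the even operator with respect to the homogeneous basis and solving the quadratic system obtained from the weight-zero Rota-Baxter identity on basis pairs with a computer algebra system. Your preliminary observation that $a^{2}\kappa=0$ kills the even parameter whenever $e_1\circ e_1\neq 0$ is a correct and useful shortcut consistent with every case in the paper's tables.
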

\subsubsection{\textbf{Rota-Baxter operators on pre-Lie superalgebras of type} $C_{1}$}$ $

$\blacksquare$ \textsf{The pre-Lie superalgebra $(C_{1,1},\circ)$}
$$ e_2\circ e_3=-e_1,~~e_3\circ e_1=e_2,~~e_3\circ e_2=e_1. $$
Rota-Baxter operators $RB(C_{1,1})$ are:
\begin{itemize}
  \item[]  $R_1(e_1)=0,\ R_1(e_2)=0,\ R_1(e_3)=a_1e_2+e_3.$
  \item[]  $R_2(e_1)=0,\ R_2(e_2)=a_2e_2,\ R_2(e_3)=a_1 e_2,~~a_2\neq 0.$
  \item[]  $R_3(e_1)=a_3e_1,\ R_3(e_2)=0,\ R_3(e_3)=a_1 e_2.$
   \end{itemize}
$\blacksquare$ \textsf{The pre-Lie superalgebra $\Big((C_{1,2})_k,\circ\Big)$} (associative)
$$ e_1\circ e_3=ke_2,~~e_3\circ e_1=(k+1)e_2.$$
Rota-Baxter operators $RB((C_{1,2})_k)$ are:
\begin{itemize}
  \item[]  $R_1(e_1)=0,\ R_1(e_2)=a_1 e_2,\ R_1(e_3)=a_2 e_2,~~k\neq -1.$
  \item[]  $R_2(e_1)=0,\ R_2(e_2)= 0,\ R_2(e_3)=a_2 e_2+a_3 e_3,~~k= -1.$
  \item[]  $R_3(e_1)=a_4e_1,\ R_3(e_2)= a_1 e_2,\ R_3(e_3)=a_2 e_2+\frac{a_1a_4}{a_4-a_1}e_3,~~a_1\neq a_4,~~k=-1.$
   \end{itemize}
$\blacksquare$ \textsf{The pre-Lie superalgebra $(C_{1,3},\circ)$:}
$$ e_1\circ e_1=e_1,~~e_3\circ e_1=e_2. $$
Rota-Baxter operators $RB(C_{1,3})$ are:
\begin{itemize}
 \item[] $R_1(e_1)=0,\ R_1(e_2)=0,\ R_1(e_3)=a_1 e_2+a_2e_3.$
 \item[] $R_2(e_1)=0,\ R_2(e_2)=a_3e_2,\ R_2(e_3)=a_1 e_2.$
   \end{itemize}
$\blacksquare $ \textsf{The pre-Lie superalgebra $(C_{1,4},\circ)$:}
$$ e_1\circ e_1=e_1,~~e_1\circ e_2=e_2,~~e_1\circ e_3=e_3,~~e_2\circ e_1=e_2,~~e_3\circ e_1=e_2+ e_3. $$
Rota-Baxter operators $RB(C_{1,4})$ are:
\begin{itemize}
  \item[] $R_1(e_1)=0,\ R_1(e_3)=0,\ R_1(e_2)=a_1 e_2.$
  \item[] $R_2(e_1)=0,\ R_2(e_2)=0,\ R_2(e_3)=0.$
  \end{itemize}
\subsubsection{\textbf{ Rota-Baxter operators on pre-Lie superalgebras of type $C_{2_{h}}$}}$ $

$\blacksquare $ \textsf{The pre-Lie superalgebra $(C_{2_{h},1},\circ)$:}
$$ e_1\circ e_1=(h+1)e_1,~~e_2\circ e_1=e_2,~~e_2\circ e_3=-e_1,~~e_3\circ e_1=h e_3,~~~~ e_3\circ e_2= e_1,~~h \in \mathbb{C}.$$
Rota-Baxter operators $RB(C_{2_{h},1})$ are:\\
$\triangleright$ \textbf{~~Case 1}: If $h=0$, we have
\begin{itemize}
\item[] $R_1(e_1)=0,\ R_1(e_2)=0,\ R_1(e_3)=a_1 e_2+a_2e_3.$
 \item[] $R_2(e_1)=0,\ R_2(e_2)= a_3e_3,\ R_2(e_3)=a_2e_3.$
 \item[] $R_3(e_1)=0,\ R_3(e_2)= 0,\ R_3(e_3)=a_2e_3.$
 \item[] $R_4(e_1)= 0,\ R_4(e_2)=a_3e_3,\ R_4(e_3)=a_2e_3,~~~a_3\neq 0.$
   \end{itemize}
$\triangleright$ \textbf{~~Case 2}: If $h\in \mathbb{C}^\ast$, we have
\begin{itemize}
 \item[]  $R_5(e_1) =0,\ R_5(e_2)=0,\ R_5(e_3)=a_1e_2.$
 \item[] $R_6(e_1) =0,\ R_6(e_2)=a_3e_3,\ R_6(e_3)=0,~~a_{3}\neq 0.$
 \item[] $R_7(e_1) =0,\ R_7(e_2)=a_5e_2+a_3e_3,\ R_7(e_3)=-\frac{a_5^2}{h a_3}e_2-\frac{a_5}{h}e_3,~~a_3\neq 0,~~a_5\neq 0.$
 \item[]  $R_8(e_1) =0,\ R_8(e_2)=0,\ R_8(e_3)=0.$
\end{itemize}
$\triangleright$ \textbf{~~Case 3}: If $h=-1$, we have
\begin{itemize}
 \item[]$R_{9}(e_1) =0,\ R_{9}(e_2)=a_3e_3,\ R_{9}(e_3)=0.$
  \item[]$R_{10}(e_1) =0,\ R_{10}(e_2)=a_5e_2+a_3e_3,\ R_{10}(e_3)=\frac{a_5^2}{a_3}e_3+a_5e_3,~~a_3\neq 0.$
  \item[]$R_{11}(e_1) =a_4 e_1,\ R_{11}(e_2)=0,\ R_{11}(e_3)=a_1e_2,~~a_4\neq 0.$
 \item[] $R_{12}(e_1) = a_4e_1,\ R_{12}(e_2)=0,\ R_{12}(e_3)=0,~~a_4\neq 0.$
\end{itemize}
$\blacksquare $ \textsf{The pre-Lie superalgebra $(C_{2_{h},2},\circ)$:}
~$$ e_1\circ e_1=(1-h)e_1,~~e_1\circ e_3=e_2,~~e_2\circ e_1=e_2,~~e_3\circ e_1= e_2+h e_3,~~h \in \mathbb{C}. $$
Rota-Baxter operators $RB(C_{2_{h},2})$ are:\\
$\triangleright$~~ \textbf{~~Case 1}: If $h=0$, we have
\begin{itemize}
  \item[] $R_{1}(e_1) =0,\ R_{1}(e_2)=0,\ R_{1}(e_3)=a_{1}e_2+a_{2}e_3.$
  \end{itemize}
$\triangleright$~~ \textbf{~~Case 2}: If $h\in \mathbb{C}^\ast$, we have
  \begin{itemize}
  \item[] $R_{2}(e_1) =0,\ R_{2}(e_2)=0,\ R_{2}(e_3)=a_{1}e_2,~~a_{1}\neq 0.$
  \item[] $R_{3}(e_1) =0,\ R_{3}(e_2)=e_2,\ R_{3}(e_3)=0.$
 \end{itemize}
$\triangleright$~~ \textbf{~~Case 3}: If $h=1$, we have
  \begin{itemize}
  \item[] $R_{4}(e_1) =a_{3} e_1,\ R_{4}(e_2)=0,\ R_{4}(e_3)=a_{1}e_2,~~a_{3}\neq 0.$
  \item[] $R_{5}(e_1) =0,\ R_{5}(e_2)=0,\ R_{5}(e_3)=a_{1} e_2,~~a_{1}\neq 0.$
  \item[] $R_{6}(e_1) =0,\ R_{6}(e_2)=e_2,\ R_{6}(e_3)=0.$
 \end{itemize}
$\blacksquare $ \textsf{The pre-Lie superalgebra $(C_{2_{h},3},\circ)$:}
$$ e_1\circ e_1=(1-h)e_1,~~e_1\circ e_2= (1-h) e_2+e_3,~~e_1\circ e_3=(1-h) e_3,~~e_2\circ e_1=(2-h) e_2+e_3 ,~~e_3\circ e_1=e_3,~~h\neq  1. $$
Rota-Baxter operators $RB(C_{2_{h},3})$ are:\\
$\triangleright$~~ \textbf{~~Case 1}: If $h=(-1)^{\frac{1}{3}}$ or $h=-(-1)^{\frac{2}{3}}$, we have
\begin{itemize}
 \item[] $R_{1}(e_1) = 0,\ R_{1}(e_2)=0,\ R_{1}(e_3)=a_1e_3.$
 \item[] $R_{2}(e_1) = 0,\ R_{2}(e_2)=a_2 e_2+a_3e_3,\ R_{2}(e_3)=a_2(h-1)e_2+a_2(h-1)e_3.$
  \end{itemize}
 $\triangleright$~~ \textbf{~~Case 2}: If $h\neq 1$, we have
   \begin{itemize}
 \item[] $R_{3}(e_1) =0,\ R_{3}(e_2)=0,\ R_{3}(e_3)=0.$
 \item[] $R_{4}(e_1) =0,\ R_{4}(e_2)=0,\ R_{4}(e_3)=a_1e_3,~~a_1\neq 0.$
 \item[] $R_{5}(e_1) =0,\ R_{5}(e_2)=0,\ R_{5}(e_3)=a_1e_3,~~h^3-2h^2+2h-1\neq 0.$
 \end{itemize}
$\blacksquare $ \textsf{The pre-Lie superalgebra $(C_{2_{h},4},\circ)$:}
$$ e_1\circ e_1=(h-1)e_1,~~e_1\circ e_2=e_3,~~e_2\circ e_1=e_2+e_3,~~e_3\circ e_1=h e_3,~~h\neq \pm 1.$$
Rota-Baxter operators $RB(C_{2_{h},4})$ are:
\begin{itemize}
 \item[] $R_{1}(e_1) =0,\ R_{1}(e_2)=a_1e_3,\ R_{1}(e_3)=a_2e_3,~~h=0.$
 \item[] $R_{2}(e_1) =0,\ R_{2}(e_2)=a_1e_3,\ R_{2}(e_3)=0,~~a_1\neq 0,~~h\neq 0.$
 \item[] $R_{3}(e_1) =0,\ R_{3}(e_2)=0,\ R_{3}(e_3)=0,~~h\neq 0.$
 \end{itemize}
$\blacksquare $ \textsf{The pre-Lie superalgebra $(C_{2_{h},5},\circ)$:}
$$ e_1\circ e_1=(1-h)e_1,~~e_1\circ e_2=(1-h)e_2+e_3,~~e_1\circ e_3=(1-h)e_3,~~e_2\circ e_1=(2-h)e_2+e_3,~~e_3\circ e_1=e_3,~~h\neq \pm 1. $$
Rota-Baxter operators $RB(C_{2_{h},5})$ are:
\begin{itemize}
\item[]  $R_{1}(e_1) =0,\ R_{1}(e_2)=e_2,\ R_{1}(e_3)=a_{1}e_2-a_{1}e_3,~~h=0.$
\item[]  $R_{2}(e_1) = 0,\ R_{2}(e_2)=0,\ R_{2}(e_3)=0,~~h\neq 0.$
\item[]  $R_{3}(e_1) =0,\ R_{3}(e_2)=a_{2} e_2-\frac{a_{3}}{(h-1)^2}e_3,\ R_{3}(e_3)=\frac{a_{2}(h-1)^2}{h}e_2-\frac{a_{2}}{h}e_3,~~a_{2}\neq 0,~~h\neq 0.$
\item[]  $R_{4}(e_1) =0,\ R_{4}(e_2)=a_{2}e_2-a_{2}e_3,\ R_{4}(e_3)= \frac{a_{2}}{2}e_2-\frac{a_{2}}{2}e_3,~~a_{2}\neq 0,~~h=2.$
\item[]  $R_{5}(e_1) =0,\ R_{5}(e_2)=a_{3}e_3,\ R_{5}(e_3)=0,~~a_{3}\neq 0.$
\end{itemize}
$\blacksquare $ \textsf{The pre-Lie superalgebra $\Big((C_{2_{h},6})_k,\circ \Big)$:} $h=0~~ or~~1,~~k=1$ associative other cases non-associative.
$$ e_1\circ e_1=ke_1,~~e_2\circ e_1=e_2,~~e_3\circ e_1=h e_3,~~h,k\in \mathbb{C}.$$
Rota-Baxter operators $RB((C_{2_{h},6})_k)$ are:\\
$\triangleright$ \textbf{~~Case 1}: If $h=0$, we have
\begin{itemize}
\item[]  $R_{1}(e_1) =0,\ R_{1}(e_2)=0,\ R_{1}(e_3)=a_{1}e_2+a_{2}e_3.$
\item[]  $R_{2}(e_1) = 0,\ R_{2}(e_2)=a_{3}e_3,\ R_{2}(e_3)=a_{2}e_3.$
\item[]  $R_{3}(e_1) =0,\ R_{3}(e_2)= 0,\ R_{3}(e_3)=a_{2}e_3.$
\end{itemize}
$\triangleright$ \textbf{~~Case 2}: If $h \in \mathbb{C}^\ast$, we have
\begin{itemize}
\item[]  $R_{4}(e_1) = 0,\ R_{4}(e_2)=0,\ R_{4}(e_3)=a_{1}e_2.$
\item[]  $R_{5}(e_1) = 0,\ R_{5}(e_2)= a_{3}e_3,\ R_{5}(e_3)=0.$
\item[]  $R_{6}(e_1) =0,\ R_{6}(e_2)=a_{4}e_2+a_{3}e_3,\ R_{6}(e_3)=-\frac{a_{4}^2}{ha_{3}}e_2-\frac{a_{4}}{h}e_3,~~a_{3}\neq 0.$
\item[]  $R_{7}(e_1) = 0,\ R_{7}(e_2)=0,\ R_{7}(e_3)=0.$
\end{itemize}
$\triangleright$ \textbf{~~Case 3}: If $h=k=0$, we have
\begin{itemize}
\item[]  $R_{8}(e_1) = 0,\ R_{8}(e_2)=0,\ R_{8}(e_3)=a_{1}e_2+a_{2}e_3.$
\item[]  $R_{9}(e_1) =0,\ R_{9}(e_2)=a_{3}e_3,\ R_{9}(e_3)=a_{2}e_3.$
\item[]  $R_{10}(e_1) = a_{5}e_1,\ R_{10}(e_2)=0,\ R_{10}(e_3)=a_{2}e_3.$
\item[]  $R_{11}(e_1) = a_{5}e_1,\ R_{11}(e_2)=0,\ R_{11}(e_3)= 0.$
\item[]  $R_{12}(e_1) =0,\ R_{12}(e_2)=a_{3}e_3,\ R_{12}(e_3)=a_{2}e_3,~~a_{3}\neq 0.$
\end{itemize}
$\triangleright$ \textbf{~~Case 4}: If $h=1$ and $k=0$, we have
\begin{itemize}
\item[]  $R_{13}(e_1) = a_{5}e_1,\ R_{13}(e_2)=0,\ R_{13}(e_3)=a_{1}e_2.$
\item[]  $R_{14}(e_1) = a_{5}e_1,\  R_{14}(e_2)=a_{3}e_3,\  R_{14}(e_3)=0.$
\item[]  $R_{15}(e_1) = a_{5}e_1,\  R_{15}(e_2)=a_{4}e_2+a_{3}e_3,\
R_{15}(e_3)=-\frac{a_{4}^2}{a_{3}}e_2-a_{4}e_3,~~a_{3}\neq 0.$
\end{itemize}
$\triangleright$ \textbf{~~Case 5}: If $h \neq 0,1$ and $k=0$, we have
\begin{itemize}
\item[]  $R_{16}(e_1) = 0,\ R_{16}(e_2)=0,\ R_{16}(e_3)=a_{1}e_2,~~a_{1}\neq 0.$
\item[]  $R_{17}(e_1) = 0,\ R_{17}(e_2)=a_{3}e_3,\ R_{17}(e_3)=0,~~r_{2,2}\neq 0.$
\item[]  $R_{18}(e_1) = 0,\ R_{18}(e_2)=a_{4}e_2+a_{3}e_3,\ R_{18}(e_3)= -\frac{a_{4}^2}{ha_{3}}e_2+\frac{a_{4}}{h}e_3,~~a_{3}\neq 0.$
\end{itemize}
$ \blacksquare$ \textsf{The pre-Lie superalgebra $\Big( (C_{2_{h},7})_k,\circ \Big)$:} $(C_{2_{0},7})_1$ associative.
$$ e_1\circ e_1=ke_1,~~e_1\circ e_2=ke_2,~~e_2\circ e_1=(k+1)e_2,~~e_3\circ e_1=h e_3,~~,~~k\neq 0,~~h \neq \pm 1.$$
Rota-Baxter operators $RB((C_{2_{h},7})_k)$ are:\\
$\triangleright$ \textbf{~~Case 1}: If $h=0$, we have
\begin{itemize}
\item[]  $R_{1}(e_1)=0,\ R_{1}(e_2)=0,\ R_{1}(e_3)=a_{1}e_2+a_{2}e_3,~~a_{1}\neq 0.$
\item[] $R_{2}(e_1) = 0,\ R_{2}(e_2)=a_{3}e_3,\ R_{2}(e_3)=a_{2} e_3,~~a_{3}\neq 0.$
\item[]  $R_{3}(e_1)=0,\ R_{3}(e_2)=0,\ R_{3}(e_3)=a_{2}e_3.$
  \end{itemize}
$\triangleright$ \textbf{~~Case 2}: If $h \in \mathbb{C}^\ast$, we have
\begin{itemize}
\item[]  $R_{4}(e_1) =0,\ R_{4}(e_2)=0,\ R_{4}(e_3)=a_{1}e_2,~~a_{1}\neq 0.$
\item[]  $R_{5}(e_1) = 0,\  R_{5}(e_2)=a_{3}e_3,\ R_{5}(e_3)=0,~~a_{3}\neq 0.$
\item[]  $R_{6}(e_1) = 0,\ R_{6}(e_2)=0,\ R_{6}(e_3)= 0.$
  \end{itemize}
$\blacksquare $ \textsf{The pre-Lie superalgebra $\Big( (C_{2_{h},8})_k,\circ \Big)$:} $h=-1~~ or~~0,~~k=1$ associative other cases are non-associative
$$ e_1\circ e_1=ke_1,~~e_1\circ e_3=ke_3,~~e_2\circ e_1=e_2,~~e_3\circ e_1=(h+k) e_3,~~h\in \mathbb{C},~~k\neq 0. $$
Rota-Baxter operators $RB((C_{2_{h},8})_k)$ are:
\begin{itemize}
\item[]  $R_{1}(e_1) =0,\ R_{1}(e_2)=0,\ R_{1}(e_3)=a_{1}e_2.$
\item[] $R_{2}(e_1) = 0,\ R_{2}(e_2)=0,\ R_{2}(e_3)=0.$
  \end{itemize}
$\blacksquare$ \textsf{The pre-Lie superalgebra $\Big((C_{2_{h})_k,9},\circ \Big)$:} $h=0~~ or~~1,~~k=-1$ associative other cases are non-associative
$$ e_1\circ e_1=ke_1,~~e_1\circ e_2=ke_2,~~e_1\circ e_3=ke_3,~~e_2\circ e_1=(k+1)e_2,~~e_3\circ e_1=(h+k) e_3,~~ h\in \mathbb{C},~~k\neq 0. $$
Rota-Baxter operators $RB((C_{2_{h},9})_k)$ are:
\begin{itemize}
\item[]  $R_{1}(e_1) =0,\ R_{1}(e_2)=a_{1}e_2+a_{2}e_3,\ R_{1}(e_3)=-\frac{a_{1}^2}{a_{2}}e_2-a_{1}e_3,~~a_{2}\neq 0,~~h=1,~~k=-1.$
\item[]  $R_{2}(e_1) = 0,\  R_{2}(e_2)=a_{2}e_3,\ R_{2}(e_3)=0.$
\item[]  $R_{3}(e_1) = 0,\ R_{3}(e_2)=0,\ R_{3}(e_3)= a_{3}e_2.$
\item[]  $R_{4}(e_1) = 0,\ R_{4}(e_2)=0,\ R_{4}(e_3)= 0.$
  \end{itemize}
\subsubsection{\textbf{ Rota-Baxter operators on pre-Lie superalgebras of type $C_{3}$}}$ $

$\blacksquare$ \textsf{The pre-Lie superalgebra $(C_{3,1},\circ)$:}
$$ e_1\circ e_1=2e_1,~~e_2\circ e_1=e_2,~~e_2\circ e_3=-e_1,~~e_3\circ e_1=e_2+e_3,~~e_3\circ e_2=e_1. $$
Rota-Baxter operators $RB(C_{3,1})$ are:
\begin{itemize}
\item[]  $R_{1}(e_1)=0,\ R_{1}(e_2)=0,\  R_{1}(e_3)=a_{1}e_2.$
\item[]  $R_{2}(e_1)=0,\  R_{2}(e_2)=a_{2}e_1+a_{3}e_2,\  R_{2}(e_3)=-\frac{a_{2}^2+a_{2}a_{3}}{2}e_2-(a_{2}+a_{3})e_3,~~a_{3}\neq 0.$
\end{itemize}
$\blacksquare $ \textsf{The pre-Lie superalgebra $\Big((C_{3,2})_k,\circ\Big)$:}
$$ e_1\circ e_1=ke_1,~~e_2\circ e_1=e_2,~~e_3\circ e_1=e_2+e_3. $$
Rota-Baxter operators $RB((C_{3,2})_k)$ are:\\
$\triangleright$ \textbf{Case 1}: If $k=0$, we have
\begin{itemize}
\item[]  $R_{1}(e_1) =a_{1}e_1,\ R_{1}(e_2)= 0,\ R_{1}(e_3)=a_{2}e_2,~~a_{1}\neq 0.$
\item[]  $R_{2}(e_1) = a_{1}e_1,\ R_{2}(e_2)=0,\ R_{2}(e_3)=a_{1}e_2,~~a_{1}\neq 0.$
\item[]  $R_{3}(e_1) = a_{1}e_1,\ R_{3}(e_2)=0,\ R_{3}(e_3)= a_{2}e_2,~~a_{1}\neq 0.$
  \end{itemize}
$\triangleright$ \textbf{Case 2}: If $k \in \mathbb{C}^\ast$, we have
\begin{itemize}
\item[]  $R_{4}(e_1) =0,\ R_{4}(e_2)= 0,\ R_{4}(e_3)=a_{2}e_2.$
\item[]  $R_{5}(e_1) = 0,\ R_{5}(e_2)=a_{3}e_2+a_{4}e_3,\ R_{5}(e_3)=-\frac{a_{3}^2+a_{3}a_{4}}{a_{4}}e_2-(a_{3}+a_{4})e_3,~~a_{4}\neq 0.$
\item[]  $R_{6}(e_1) = 0,\ R_{6}(e_2)=0,\ R_{6}(e_3)=0.$
\end{itemize}
$\blacksquare$ \textsf{The pre-Lie superalgebra $\Big((C_{3,3})_k,\circ \Big)$:}
$$ e_1\circ e_3=ke_2,~~e_2\circ e_1=e_2,~~e_3\circ e_1=(k+1)e_2+e_3,~~k\neq 0.$$
Rota-Baxter operators $RB((C_{3,3})_k)$ are:
\begin{itemize}
 \item[] $R_{1}(e_1) = a_{1}e_1,\ R_{1}(e_2)=0,\ R_{1}(e_3)=a_{2}e_2.$
\item[]  $R_{2}(e_1) =0,\ R_{2}(e_2)=0,\ R_{2}(e_3)=a_{2}e_2.$
\item[]  $R_{3}(e_1) =0,\ R_{3}(e_2)=0,\ R_{3}(e_3)=0.$
\end{itemize}
$\blacksquare $ \textsf{The pre-Lie superalgebra $\Big((C_{3,4})_k,\circ\Big)$:}
$$ e_1\circ e_1=ke_1,~~e_1\circ e_2=ke_2,~~e_1\circ e_3=ke_3,~~e_2\circ e_1=(k+1)e_2,~~e_3\circ e_1=e_2+(k+1)e_3,~~k\neq 0. $$
Rota-Baxter operator $RB((C_{3,4})_k)$ is:
\begin{itemize}
 \item[] $R_{1}(e_1) =0,\ R_{1}(e_2)=0,\ R_{1}(e_3)=a_{1}e_2.$
\end{itemize}
\subsubsection{\textbf{Rota-Baxter operators on pre-Lie superalgebras of type $C_{4}$}}$ $

$\blacksquare$ \textsf{The pre-Lie superalgebra $(C_{4,1},\circ)$:} $($associative$)$
$$ ~e_2\circ e_3=-e_1,~~e_3\circ e_2=e_1. $$
Rota-Baxter operators $RB(C_{4,1})$ are:
\begin{itemize}
 \item[] $R_{1}(e_1) =a_{1}e_1,\ R_{1}(e_2)=a_{2}e_2+a_{3}e_3,\ R_{1}(e_3)=a_{4}e_2-\frac{a_{1}a_{2}+a_{3}a_{4}}{a_{1}-a_{2}}e_3,~~a_{1}\neq a_{2}.$
  \item[]$R_{2}(e_1) = 0,\ R_{2}(e_2)=0,\ R_{2}(e_3)=a_{4}e_2+a_{5}e_3.$
  \item[] $R_{3}(e_1) =a_{1}e_1,\ R_{3}(e_2)=a_{1}e_2+a_{3}e_3,\ R_{3}(e_3)=-\frac{a_{1}^2}{a_{3}}e_2+a_{5}e_3,~~a_{3}\neq 0.$
\end{itemize}
$\blacksquare$ \textsf{The pre-Lie superalgebra $(C_{4,2},\circ)$:} (associative).
$$e_i\circ e_j=0,~~\forall~~i,j=1,2,3.$$
Rota-Baxter operator $RB(C_{4,2})$ is:
\begin{itemize}
  \item[]$R_{1}(e_1)=a_{1}e_1,\ R_{1}(e_2)=a_{2}e_2+a_{3}e_3,\ R_{1}(e_3)=a_{4}e_2+a_{5}e_3. $
\end{itemize}
$\blacksquare$ \textsf{The pre-Lie superalgebra $(C_{4,3},\circ)$:} $($associative$)$
$$ e_1\circ e_1=e_1. $$
Rota-Baxter operator $RB(C_{4,3})$ is:
\begin{itemize}
 \item[] $R_{1}(e_1)=0,\ R_{1}(e_2)=a_{1}e_2+a_{2}e_3,\ R_{1}(e_3)=a_{3}e_2+a_{4}e_3.$
\end{itemize}
$\blacksquare$ \textsf{The pre-Lie superalgebra $(C_{4,4},\circ)$:} $($associative$)$
$$ e_1\circ e_1=e_1,~~e_1\circ e_3=e_3,~~e_3\circ e_1=e_3. $$
Rota-Baxter operators $RB(C_{4,4})$ are:
\begin{itemize}
\item[]  $R_{1}(e_1) =0,\ R_{1}(e_2)=a_{1}e_2,\ R_{1}(e_3)=a_{2}e_2.$
\item[]  $R_{2}(e_1) =0,\ R_{2}(e_2)=a_{1}e_2+a_{3}e_3,\ R_{2}(e_3)=0.$
\item[]  $R_{3}(e_1) =0,\ R_{3}(e_2)=a_{1}e_2,\ R_{3}(e_3)=0.$
\end{itemize}
$\blacksquare$ \textsf{The pre-Lie superalgebra $(C_{4,5},\circ)$:} $($associative$)$
$$ e_1\circ e_1=e_1,~~e_1\circ e_2=e_2,~~e_1\circ e_3=e_3,~~e_2\circ e_1=e_2,~~e_3\circ e_1=e_3. $$
Rota-Baxter operators $RB(C_{4,5})$ are:
\begin{itemize}
\item[]  $R_{1}(e_1) = 0,\ R_{1}(e_2)=0,\ R_{1}(e_3)=a_{1}e_2.$
\item[]  $R_{2}(e_1) =0,\ R_{2}(e_2)=a_{2}e_2+a_{3}e_3,\ R_{2}(e_3)=-\frac{a_{2}^2}{a_{3}}e_2-a_{2} e_3,~~a_{3}\neq 0.$
\item[]  $R_{3}(e_1) =0,\ R_{3}(e_2)=0,\ R_{3}(e_3)=0.$
\end{itemize}
$\blacksquare$ \textsf{The pre-Lie superalgebra $(C_{4,6},\circ)$:} $($associative$)$
$$ e_1\circ e_3=e_2,~~e_3\circ e_1=e_2. $$
Rota-Baxter operators $RB(C_{4,6})$ are:
\begin{itemize}
 \item[] $R_{1}(e_1) = 0,\ R_{1}(e_2)=0,\  R_{1}(e_3)=a_{1}e_2+a_{2}e_3.$
 \item[] $R_{2}(e_1) = a_{3}e_1,\ R_{2}(e_2)=a_{4}e_2,\ R_{2}(e_3)=a_{1}e_2+\frac{a_{3}a_{4}}{a_{3}-a_{4}} e_3,~~a_{3}\neq a_{4}.$
\end{itemize}
\subsubsection{\textbf{Rota-Baxter operators on pre-Lie superalgebras of type $C_{5}$}}$ $

$\blacksquare$ \textsf{The pre-Lie superalgebra $\Big((C_{5,1})_k,\circ\Big)$:} $((C_{5,1})_0$ is associative$)$.
$$ e_1\circ e_2=ke_3,~~e_2\circ e_1=ke_3,~~e_3\circ e_3=e_1,~~k=0~~or~~1. $$
Rota-Baxter operators $RB((C_{5,1})_k)$ are:\\
$\triangleright$ \textbf{Case 1}: If $k=0$, we have
\begin{itemize}
\item[]  $R_{1}(e_1) =0,\ R_{1}(e_2)=a_{1}e_3,\ R_{1}(e_3)=a_{2}e_3.$
\item[]  $R_{2}(e_1) = 0,\ R_{2}(e_2)=a_{1}e_3,\  R_{2}(e_3)=a_{3}e_2.$
\item[]  $R_{3}(e_1) = 0,\ R_{3}(e_2)=a_{4}e_2,\  R_{3}(e_3)=a_{3}e_2.$
\item[]  $R_{4}(e_1) = 0,\ R_{4}(e_2)=0,\  R_{4}(e_3)=a_{2}e_3.$
\item[]  $R_{5}(e_1) =a_{5}e_1 ,\ R_{5}(e_2)=a_{4}e_2,\  R_{5}(e_3)=\frac{a_{5}a_{4}}{a_{5}-a_{4}}e_3,~~a_{4}\neq a_{5}.$
\end{itemize}
$\triangleright$ \textbf{Case 2}: If $k=1$, we have
\begin{itemize}
\item[]  $R_{6}(e_1) =0,\ R_{6}(e_2)=a_{1}e_3,\  R_{6}(e_3)=a_{2}e_3.$
\item[]  $R_{7}(e_1) =0,\  R_{7}(e_2)=0,\ R_{7}(e_3)=a_{2}e_3.$
\item[]  $R_{8}(e_1) =0,\  R_{8}(e_2)=a_{1}e_3,\ R_{8}(e_3)=e_3.$
\item[]  $R_{9}(e_1) =a_{5}e_1,\  R_{9}(e_2)=a_{5}e_2,\ R_{9}(e_3)=\frac{a_{5}}{2}e_3,~~a_{5}\neq 0.$
\item[]  $R_{10}(e_1) =a_{5}e_1,\  R_{10}(e_2)=\frac{a_{5}}{a_{5}-1}e_2,\ R_{10}(e_3)=e_3,~~a_{5}\neq 1.$
\end{itemize}
$\blacksquare$ \textsf{The pre-Lie superalgebra $(C_{5,2},\circ)$:}
$$ e_1\circ e_1=e_1,~~e_1\circ e_2=e_2,~~e_2\circ e_1=e_2,~~e_2\circ e_3=e_1. $$
Rota-Baxter operators $RB(C_{5,2})$ are:
\begin{itemize}
\item[]  $R_{1}(e_1) =0,\  R_{1}(e_2)=a_{1}e_3,\ R_{1}(e_3)=a_{2}e_3.$
\item[]  $R_{2}(e_1)=0,\ R_{2}(e_2)=0,\  R_{2}(e_3)=a_{2}e_3.$
\item[] $R_{3}(e_1) =0,\ R_{3}(e_2)=0,\  R_{3}(e_3)=a_{3}e_2.$
\end{itemize}
$\blacksquare$ \textsf{The pre-Lie superalgebra $(C_{5,3},\circ)$:}
$$ e_1\circ e_1=e_1,~~e_1\circ e_2=e_2+e_3,~~e_2\circ e_1=e_2+e_3,~~e_2\circ e_3=e_1. $$
Rota-Baxter operators $RB(C_{5,3})$ are:
\begin{itemize}
\item[]  $R_{1}(e_1) =0,\ R_{1}(e_2)=a_{1}e_3,\  R_{1}(e_3)=a_{2}e_3.$
\item[]  $R_{2}(e_1) =0,\  R_{2}(e_2)=0,\ R_{2}(e_3)=a_{2}e_3.$
\item[]  $R_{3}(e_1) =0,\ R_{3}(e_2)=a_{3}e_2,\  R_{3}(e_3)=-a_{3}e_2.$
\end{itemize}
$\blacksquare$ \textsf{The pre-Lie superalgebra $\Big((C_{5,4})_k,\circ\Big)$:} $($associative$)$
$$ e_2\circ e_3=(\frac{1}{2}+k)e_1,~~e_3\circ e_2=(\frac{1}{2}-k)e_1,~~ k\geq 0,~~k\neq \frac{1}{2}. $$
Rota-Baxter operators $RB((C_{5,4})_k)$ are:
\begin{itemize}
\item[]  $R_{1}(e_1) =0,\  R_{1}(e_2) =a_{1}e_3,\  R_{1}(e_3) =a_{2}e_3.$
 \item[] $R_{2}(e_1)= 0,\  R_{2}(e_2)=a_{3}e_2,\  R_{2}(e_3)=a_{4}e_2.$
\item[]  $R_{3}(e_1)=0,\ R_{3}(e_2)=0,\  R_{3}(e_3)=a_{2}e_3.$
\item[]  $R_{4}(e_1)=0,\  R_{4}(e_2) =0,\   R_{4}(e_3) =a_{4} e_2.$
\item[] $R_{5}(e_1)=0,\ R_{5}(e_2)=a_{1}e_3,\  R_{5}(e_3)=0,~~a_{1}\neq 0.$
\item[] $R_{6}(e_1)=0,\  R_{6}(e_2)=a_{3}e_2,\ R_{6}(e_3)=0,~~a_{3}\neq 0.$
\item[]  $R_{7}(e_1)=a_{5}e_1,\  R_{7}(e_2) =0,\   R_{7}(e_3) =0.$
\item[] $R_{8}(e_1)=a_{5}e_1,\ R_{8}(e_2)=a_{5}e_2+a_{1}e_3,\  R_{8}(e_3)=\frac{a_{5}^2}{a_{1}}e_2+a_{5}e_3,~~a_{1}\neq 0,~~a_{5}\neq 0,~~k=0.$
\item[] $R_{9}(e_1)=a_{5}e_1,\  R_{9}(e_2)=a_{3}e_2,\ R_{9}(e_3)=-\frac{a_{5}a_{3}}{a_{5}-a_{3}}e_3,~~a_{3}\neq a_{5}.$
\end{itemize}
\subsubsection{\textbf{Rota-Baxter operators on pre-Lie superalgebras of type $C_{6}$}}$ $

$\blacksquare$ \textsf{The pre-Lie superalgebra $(C_{6,1},\circ)$} (associative)
$$ e_2\circ e_2=\frac{1}{2}e_1,~~e_2\circ e_3=-e_1,~~e_3\circ e_2=e_1. $$
Rota-Baxter operators $RB(C_{6,1})$ are:
\begin{itemize}
\item[]  $R_{1}(e_1) =0,\  R_{1}(e_2)=a_{1}e_3,\ R_{1}(e_3)=a_{2}e_3,~~a_{1}\neq 0.$
\item[]  $R_{2}(e_1) =a_{3}e_1,\ R_{2}(e_2)=a_{1}e_3,\  R_{2}(e_3)=0.$
\end{itemize}

$\blacksquare$ \textsf{The pre-Lie superalgebra $(C_{6,2},\circ)$:} $($associative$)$
$$ e_2\circ e_2=\frac{1}{2}e_1. $$
Rota-Baxter operators $RB(C_{6,2})$ are:
\begin{itemize}
\item[]  $R_{1}(e_1) =a_{1}e_1,\  R_{1}(e_2)=a_{2}e_3,\ R_{1}(e_3)=a_{3}e_3.$
\item[]  $R_{2}(e_1) = a_{1}e_1,\ R_{2}(e_2)=2a_{1}e_2+a_{2}e_3,\  R_{2}(e_3)=a_{3}e_3.$
\end{itemize}
$\blacksquare$ \textsf{The pre-Lie superalgebra $(C_{6,3},\circ)$} (associative)
$$ e_1\circ e_2=e_3,~~e_2\circ e_1=e_3,~~e_2\circ e_2=\frac{1}{2}e_1. $$
Rota-Baxter operators $RB(C_{6,3})$ are:
\begin{itemize}
\item[]  $R_{1}(e_1)=0,\  R_{1}(e_2)=a_{1}e_2,\  R_{1}(e_3)=a_{2}e_3.$
\item[]  $R_{2}(e_1)=a_{3}e_1,\  R_{2}(e_2)= a_{1}e_3,\  R_{2}(e_3)=0,~~a_{3}\neq 0.$
\item[]  $R_{3}(e_1)=a_{3}e_1,\  R_{3}(e_2)= 2a_{3}e_2+a_{1}e_3,\  R_{3}(e_3)=\frac{2a_{3}}{3}e_3.$
\end{itemize}
$\blacksquare$ \textsf{The pre-Lie superalgebra $(C_{6,4},\circ)$:} $($associative$)$
$$ e_1\circ e_1=e_1,~~e_1\circ e_2=e_2,~~e_2\circ e_1= e_2,~~~e_2\circ e_2=\frac{1}{2}e_1. $$
Rota-Baxter operators $RB(C_{6,4})$ are:
\begin{itemize}
 \item[] $R_{1}(e_1) =0,\ R_{1}(e_2)=a_{1}e_3,\ R_{1}(e_3)=a_{2}e_3.$
\item[]  $R_{2}(e_1) =0,\ R_{2}(e_2)= 0,\ R_{2}(e_3)=a_{2}e_3.$
\end{itemize}
\subsection{Classification of Rota-Baxter operator on $3$-dimensional pre-Lie superalgebras with $2$-dimensional even part}
In this section, we describe all   Rota-Baxter operators of weight zero on the $3$-dimensional complex pre-Lie superalgebras with two-dimensional  even part which were classified in \cite{Bai and Zhang classif} by R. Zhang and C. Bai.
In the following, let $\{e_1,e_2,e_3\}$ be a homogeneous basis of a pre-Lie superalgebra $(\mathcal{A},\circ)$, where  $\{e_1,e_2\}$ is a basis of $\mathcal{A}_0$ and $\{e_3\}$ is a basis of $\mathcal{A}_1$.
The computation are obtained using computer algebra system and the operators are described with respect to the basis.
\begin{prop}The Rota-Baxter operators $($of weight zero$)$ on $3$-dimensional pre-Lie superalgebras (associative or non-associative) with $2$-dimensional even part of type $\widehat{A}_1,~~\widehat{A}_{2},~~\widehat{A}_{3},~~\widehat{A}_{4},~~\widehat{A}_{5},~~\widehat{A}_6,~~\widehat{A}_{7_h},
~~\widehat{A}_8,~~\widehat{A}_9,~~\widehat{A}_{10_h}$ and $\widehat{A}_{11}$ are given as follows:
\end{prop}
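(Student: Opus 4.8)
The plan is to verify, algebra by algebra, that the operators listed exhaust the solutions of the weight-zero Rota-Baxter equation
$$R(x)\circ R(y)=R\bigl(R(x)\circ y+x\circ R(y)\bigr),\qquad x,y\in\mathcal{H}(\mathcal{A}),$$
obtained by specializing the defining identity \eqref{Rota-baxter-Lie} of a Rota-Baxter pre-Lie superalgebra (with $\lambda=0$) to each of the classified multiplication tables of Zhang and Bai.

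First I would exploit the grading to cut the problem down to finitely many unknowns. Since $R$ is required to be even and $\{e_1,e_2\}$ spans $\mathcal{A}_0$ while $\{e_3\}$ spans $\mathcal{A}_1$, any admissible $R$ must satisfy $R(\mathcal{A}_0)\subseteq\mathcal{A}_0$ and $R(\mathcal{A}_1)\subseteq\mathcal{A}_1$. Thus $R$ is encoded by a $2\times 2$ matrix acting on $\{e_1,e_2\}$ together with a single scalar acting on $e_3$, i.e.\ by five structure constants. Substituting this general $R$ into the identity above and letting $(x,y)$ run over all ordered pairs of basis vectors, then comparing coefficients of $e_1,e_2,e_3$ in each resulting vector equation, converts the whole family into a finite system of quadratic polynomial equations in those five unknowns.

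The remaining work is to solve each such system, and here the case analysis is intrinsic to the problem. Whether a product $R(e_i)\circ R(e_j)$ contributes depends on which structure constants of the underlying superalgebra vanish, so the solution set is governed by the particular table $\widehat{A}_1,\dots,\widehat{A}_{11}$ under consideration; moreover, for the parametrized families $\widehat{A}_{7_h}$ and $\widehat{A}_{10_h}$ the solution variety degenerates as $h$ crosses its distinguished values. I would therefore branch on the vanishing or non-vanishing of the relevant matrix entries (and of $h$), solve the resulting subsystems, and discard redundant or spurious branches, so that the listed operators are exactly the admissible $R$ for each type.

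The main obstacle I anticipate is bookkeeping rather than any conceptual difficulty: the number of algebras multiplied by the number of basis pairs produces a large collection of quadratic constraints, and the delicate point is keeping the parametric branches simultaneously mutually exclusive and exhaustive, which is easy to get wrong by hand. For this reason I would carry out the elimination with the aid of a computer algebra system, as the authors indicate, reserving direct computation for the two independent checks that each listed operator genuinely satisfies the Rota-Baxter identity and that no solution branch has been overlooked.
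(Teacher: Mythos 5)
Your proposal matches the paper's own method: the authors likewise take a general even linear map (a $2\times2$ block on $\{e_1,e_2\}$ plus a scalar on $e_3$), substitute it into the weight-zero Rota-Baxter identity for each multiplication table of Zhang and Bai, and solve the resulting quadratic systems by direct computation with a computer algebra system, branching on the parameters $h$ and $k$. This is essentially the same approach, so no further comparison is needed.
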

\subsubsection{\textbf{Rota-Baxter operators on pre-Lie superalgebras of type} $\widehat{A}_{1}$}$ $

$\blacksquare$ \textsf{The pre-Lie superalgebra $(\widehat{A}_{1,1},\circ)\simeq D_3$} (associative).
$$ e_1\circ e_1=e_1,~~e_2\circ e_2=e_2. $$
Rota-Baxter operators $RB(\widehat{A}_{1,1})$ are:
\begin{itemize}
\item[]  $R_{1}(e_1) =0,\ R_{1}(e_2)=0,\ R_{1}(e_3)=a_{1}e_3.$
\item[]  $R_{2}(e_1) =0,\ R_{2}(e_2)= \frac{1}{2}e_2,\  R_{2}(e_3)=a_{1}e_3.$
\item[]  $R_{3}(e_1) =0,\ R_{3}(e_2)= e_1+\frac{1}{2}e_2,\  R_{3}(e_3)=a_{1}e_3.$
\end{itemize}
$\blacksquare$ \textsf{The pre-Lie superalgebra $\Big((\widehat{A}_{1,2})_k,\circ\Big)\simeq D_1$}, $(\widehat{A}_{1,2})_1$ associative.
$$e_1\circ e_1=e_1,~~e_2\circ e_2=e_2,~~~e_3\circ e_1=ke_3,~~k\in \mathbb{C}^\ast. $$
Rota-Baxter operators $RB((\widehat{A}_{1,2})_k)$ are:
\begin{itemize}
\item[]  $R_{1}(e_1) =0,\ R_{1}(e_2)=0,\  R_{1}(e_3)=0.$
\item[]  $R_{2}(e_1) =0,\  R_{2}(e_2)=\frac{1}{2}e_2,\  R_{2}(e_3)=0.$
\item[]  $R_{3}(e_1) =0,\  R_{3}(e_2)=e_1+\frac{1}{2}e_2,\  R_{3}(e_3)=0.$
\end{itemize}
$\blacksquare$ \textsf{The pre-Lie superalgebra $\Big((\widehat{A}_{1,3})_{k_1,k_2},\circ \Big)\simeq D_1$}.
$$ e_1\circ e_1=e_1,~~e_2\circ e_2=e_2,~~e_3\circ e_1=k_1e_3,~~e_3\circ e_2=k_2e_3,~~k_1, k_2\neq 0,~~k_1\leq k_2,~~k_1\neq -k_2. $$
Rota-Baxter operators $RB((\widehat{A}_{1,3})_{k_1,k_2})$ are:\\
 If $\Big(k_2< 0,~~~~k_1\leq k_2 \Big)$ or $\Big((k_2>0,~~~~k_1<-k_2)~~or~~(-k_2<k_1<0)~~or~~(0<k_1\leq k_2)\Big)$
\begin{itemize}
 \item[] $R_{1}(e_1)=0,\ R_{1}(e_2)=0,\ R_{1}(e_3)=0.$
\end{itemize}
$\blacksquare$ \textsf{The pre-Lie superalgebra $(\widehat{A}_{1,4},\circ)\simeq D_3$} (associative).
$$e_1\circ e_1=e_1,~~e_2\circ e_2=e_2,~~~e_2\circ e_3=e_3,~~e_3 \circ e_2=e_3. $$
Rota-Baxter operators $RB(\widehat{A}_{1,4})$ are:
\begin{itemize}
\item[]  $R_{1}(e_1) =0,\ R_{1}(e_2)=0, \ R_{1}(e_3)=0.$
\item[]  $R_{2}(e_1) =0,\ R_{2}(e_2)=\frac{1}{2}e_2,\ R_{2}(e_3)=0.$
\item[]  $R_{3}(e_1) =0,\ R_{3}(e_2)=e_1+\frac{1}{2}e_2,\ R_{3}(e_3)=0.$
\end{itemize}
$\blacksquare$ \textsf{The pre-Lie superalgebra $\Big((\widehat{A}_{1,5})_{k_1,k_2},\circ\Big)\simeq D_1$}, $k_1=k_2=0~~or~~k_1=1,~~k_2=0$ associative other cases non-associative.
$$e_1\circ e_1=e_1,~~e_2\circ e_2=e_2,~~~e_2\circ e_3=e_3,~~e_3\circ e_1=k_1e_3,~~e_3\circ e_2=k_2e_3,~~k_1\neq 0~~or~~k_2\neq 1. $$
Rota-Baxter operator $RB((\widehat{A}_{1,5})_{k_1,k_2})$ is:
\begin{itemize}
\item[]  $R_{1}(e_1) =0,\ R_{1}(e_2)=0,\ R_{1}(e_3)=0.$
\end{itemize}
$\blacksquare$ \textsf{The pre-Lie superalgebra $(\widehat{A}_{1,6},\circ)\simeq D_2$} (associative).
$$ e_1\circ e_1=e_1,~~e_2\circ e_2=e_2,~~~,e_2\circ e_3=e_3,~~e_3\circ e_2=e_3,~~e_3\circ e_3=e_2. $$
Rota-Baxter operators $RB(\widehat{A}_{1,6})$ are:
\begin{itemize}
\item[]  $R_{1}(e_1) =0,\ R_{1}(e_2)=0, \ R_{1}(e_3)=0.$
\item[]  $R_{2}(e_1) =0,\ R_{2}(e_2)=\frac{1}{2}e_2,\ R_{2}(e_3)=0.$
\item[]  $R_{3}(e_1) =0,\ R_{3}(e_2)=e_1+\frac{1}{2}e_2,\ R_{3}(e_3)=0.$
\end{itemize}
\subsubsection{\textbf{Rota-Baxter operators on pre-Lie superalgebras of type} $\widehat{A}_{2}$}$ $

$\blacksquare$ \textsf{The pre-Lie superalgebra $(\widehat{A}_{2,1},\circ)\simeq D_3$} (associative).
$$ e_1\circ e_1=e_1,~~e_1\circ e_2=e_2,~~~e_2\circ e_1=e_2. $$
Rota-Baxter operators $RB(\widehat{A}_{2,1})$ are:
\begin{itemize}
\item[]  $R_{1}(e_1) =a_{1}e_2,\ R_{1}(e_2)=0,\ R_{1}(e_3)=a_{2}e_3.$
\item[]  $R_{2}(e_1) = 0,\ R_{2}(e_2)=\frac{1}{2}e_1,\ R_{2}(e_3)= a_{2}e_3.$
  \end{itemize}
$\blacksquare$ \textsf{The pre-Lie superalgebra $\Big((\widehat{A}_{2,2})_k,\circ\Big)\simeq D_1$}, $(\widehat{A}_{2,2})_1$ is associative.
$$ e_1\circ e_1=e_1,~~e_1\circ e_2=e_2,~~~e_2\circ e_1=e_1,~~e_3\circ e_1=ke_3,~~k\neq 0. $$
Rota-Baxter operators $RB((\widehat{A}_{2,2})_k)$ are:
\begin{itemize}
\item[]  $R_{1}(e_1) =a_{1}e_2,\ R_{1}(e_2)=0,\ R_{1}(e_3)=a_{2}e_3.$
\item[]  $R_{2}(e_1) = 0,\ R_{2}(e_2)=\frac{1}{2}e_1,\ R_{2}(e_3)= a_{2}e_3.$
  \end{itemize}
$\blacksquare$ \textsf{The pre-Lie superalgebra $\Big((\widehat{A}_{2,3})_k,\circ\big)\simeq D_1$}
$$ e_1\circ e_1=e_1,~~e_1\circ e_2=e_2,~~~e_2\circ e_1=e_2,~~e_3\circ e_1=ke_3,~~e_3\circ e_2=e_3. $$
Rota-Baxter operators $RB((\widehat{A}_{2,3})_k)$ are:
\begin{itemize}
\item[]  $R_{1}(e_1) =a_{1}e_2,\ R_{1}(e_2)=0,\ R_{1}(e_3)=a_{2}e_3.$
\item[]  $R_{2}(e_1) = 0,\ R_{2}(e_2)=\frac{1}{2}e_1,\ R_{2}(e_3)= 0.$
\item[]  $R_{3}(e_1) =a_{1}e_2,\ R_{3}(e_2)=0,\ R_{3}(e_3)=a_{2}e_3,~~k=1.$
\item[]  $R_{4}(e_1) =0,\ R_{4}(e_2)=0,\ R_{4}(e_3)=0,~~k\neq 1.$
  \end{itemize}
$\blacksquare$ \textsf{The pre-Lie superalgebra $(\widehat{A}_{2,4},\circ)\simeq D_3$} (associative).
$$ e_1\circ e_1=e_1,~~e_1\circ e_2=e_2,~~e_1\circ e_3=e_3,~~e_2\circ e_1=e_2,~~~e_3\circ e_1=e_3. $$
Rota-Baxter operator $RB(\widehat{A}_{2,4})$ is:
\begin{itemize}
\item[]  $R_{1}(e_1) =a_{1}e_2,\ R_{1}(e_2)=0,\ R_{1}(e_3)=0.$
  \end{itemize}
$\blacksquare$ \textsf{The pre-Lie superalgebra $\Big((\widehat{A}_{2,5})_k,\circ\Big)\simeq D_1$}, $(\widehat{A}_{2,5})_0$ is associative.
$$ e_1\circ e_1=e_1,~~e_1\circ e_2=e_2,~~e_1\circ e_3=e_3,~~e_2\circ e_1=e_2,~~~e_3\circ e_1=ke_3,~~k\neq 1. $$
Rota-Baxter operators $RB((\widehat{A}_{2,5})_k)$ are:
\begin{itemize}
 \item[] $R_{1}(e_1) = a_{1}e_2,\ R_{1}(e_2)=0,\ R_{1}(e_3)= a_{2}e_3,~~k=0.$
\item[]  $R_{2}(e_1) = a_{1} e_2,\ R_{2}(e_2)=0,\  R_{2}(e_3)=0,~~k\neq 0.$
  \end{itemize}
$\blacksquare$ \textsf{The pre-Lie superalgebra $\Big((\widehat{A}_{2,6})_k,\circ\Big)\simeq D_1$:}
$$ e_1\circ e_1=e_1,~~e_1\circ e_2=e_2,~~e_1\circ e_3=e_3,~~e_2\circ e_1=e_2,~~~e_3\circ e_1=ke_3,~~e_3\circ e_2=e_3. $$
Rota-Baxter operators $RB((\widehat{A}_{2,6})_k)$ are:
\begin{itemize}
\item[]  $R_{1}(e_1) =a_{1}e_2,\ R_{1}(e_2)=0,\ R_{1}(e_3)=0.$
  \end{itemize}
$\blacksquare$ \textsf{The pre-Lie superalgebra $(\widehat{A}_{2,7},\circ)\simeq D_2$} (associative).
$$ e_1\circ e_1=e_1,~~e_1\circ e_2=e_2,~~e_1\circ e_3=e_3,~~e_2\circ e_1=e_2,~~~e_3\circ e_1=e_3,~~e_3\circ e_3=e_2. $$
Rota-Baxter operators $RB(\widehat{A}_{2,7})$ are:
\begin{itemize}
\item[]  $R_{1}(e_1) =a_{1}e_2,\ R_{1}(e_2)=0,\ R_{1}(e_3)=0.$
  \end{itemize}
\subsubsection{\textbf{Rota-Baxter operators on pre-Lie superalgebras of type} $\widehat{A}_{3}$} $ $

$\blacksquare$ \textsf{The pre-Lie superalgebra $(\widehat{A}_{3,1},\circ)\simeq D_3$} (associative).
$$ e_1\circ e_1=e_1. $$
Rota-Baxter operators $RB(\widehat{A}_{3,1})$ are:
\begin{itemize}
 \item[] $R_{1}(e_1) = a_{1}e_2,\ R_{1}(e_2)= a_{2}e_2,\ R_{1}(e_3)=a_{3} e_3.$
  \end{itemize}
$\blacksquare$ \textsf{The pre-Lie superalgebra $(\widehat{A}_{3,4},\circ)\simeq D_2 $:} (associative).
$$ e_1\circ e_1=e_1,~~~e_3\circ e_3=e_2. $$
Rota-Baxter operators $RB(\widehat{A}_{3,4})$ are
\begin{itemize}
\item[]  $R_{1}(e_1) = a_{1}e_2,\ R_{1}(e_2)=a_{2}e_2,\ R_{1}(e_3)=0.$
\item[]  $R_{2}(e_1) = a_{1}e_2,\ R_{2}(e_2)= a_{2}e_2,\ R_{2}(e_3)=2 a_{2}e_3.$
  \end{itemize}
$\blacksquare$ \textsf{The pre-Lie superalgebras} of type
\begin{itemize}
\item[] $\Big((\widehat{A}_{3,2})_k,\circ\Big)\simeq D_1: \ \ e_1\circ e_1=e_1,~~e_3\circ e_1=k e_3,~~k\neq 0,~~((\widehat{A}_{3,2})_1~~ \text{is associative)}.$
\item[] $(\widehat{A}_{3,3})_k,\circ)\simeq D_1 \ \ \ : \ \ \ e_1\circ e_1=e_1,~~e_3\circ e_1=ke_3,~~e_3\circ e_2=e_3. $
\item[] $(\widehat{A}_{3,5},\circ)\simeq D_3 \ \ \ \ \  : \ \ \ e_1\circ e_1=e_1,~~~e_1\circ e_3=e_3,~~e_3\circ e_1=e_3,~~\text{(associative)}.$
\item[] $\Big((\widehat{A}_{3,6})_k,\circ\Big)\simeq D_1 \ ~~  : \ \ \ e_1\circ e_1=e_1,~~~e_1\circ e_3=e_3,~~e_3\circ e_1=ke_3,~~k\neq 1,~~((\widehat{A}_{3,6})_0~~ \text{is associative)}.$
\item[] $\Big((\widehat{A}_{3,7})_k,\circ\Big)\simeq D_1 \ : \ \ \ \ e_1\circ e_1=e_1,~~~e_1\circ e_3=e_3,~~~e_3\circ e_1=ke_3,~~e_3\circ e_2=e_3.$
\item[] $(\widehat{A}_{3,8},\circ)\simeq D_2 \ \ \ \ \ : \ \ \ \ e_1\circ e_1=e_1,~~e_1\circ e_3=e_3,~~~e_3\circ e_1=e_3,~~e_3\circ e_3=e_1,~~\text{(associative)}.$
\end{itemize}
have the same Rota-Baxter operators:
\begin{itemize}
\item[]  $R_{1}(e_1) = a_{1}e_2,\ R_{1}(e_2)=a_{2}e_2,\ R_{1}(e_3)=0.$
  \end{itemize}
\subsubsection{\textbf{ Rota-Baxter operators on pre-Lie superalgebras of type} $\widehat{A}_{4}$} $ $

$\blacksquare$ \textsf{The pre-Lie superalgebra $(\widehat{A}_{4,1},\circ)\simeq D_3$} (associative).
$$e_i\circ e_j=0,~~\forall~~i,j=1,2,3.$$
Rota-Baxter operators $RB(\widehat{A}_{4,1})$ are:
\begin{eqnarray*}
  R_{1}(e_1) &=&a_{1}e_1+a_{2}e_2,\ R_{1}(e_2)=a_{3}e_1+a_{4}e_2,\  R_{1}(e_3)=a_{5} e_3.
  \end{eqnarray*}
$\blacksquare$ \textsf{The pre-Lie superalgebra $(\widehat{A}_{4,2},\circ)\simeq D_1$:}
$$ e_3\circ e_1=e_3. $$
Rota-Baxter operators $RB(\widehat{A}_{4,2})$ are:
\begin{itemize}
\item[]  $R_{1}(e_1) = a_{1}e_1+a_{2}e_2,\ R_{1}(e_2)= a_{3}e_1+a_{4}e_2,\ R_{1}(e_3)=0.$
  \end{itemize}
$\blacksquare$ \textsf{The pre-Lie superalgebra $(\widehat{A}_{4,3},\circ)\simeq D_2$:} $($associative$)$
$$ e_3\circ e_3=e_2. $$
Rota-Baxter operators $RB(\widehat{A}_{4,3})$ are:
\begin{itemize}
\item[]  $R_{1}(e_1) = a_{1}e_1+a_{2}e_2,\ R_{1}(e_2)= a_{3}e_2,\  R_{1}(e_3)= 2a_{3}e_3.$
\item[]  $R_{2}(e_1) = a_{1}e_1+a_{2}e_2,\ R_{2}(e_2)=a_{4}e_1+ a_{3}e_2,\ R_{2}(e_3)=0.$
  \end{itemize}
\subsubsection{\textbf{Rota-Baxter operators on pre-Lie superalgebras of type }$\widehat{A}_{5}$}$ $

$\blacksquare$ \textsf{The pre-Lie superalgebra $(\widehat{A}_{5,1},\circ)\simeq D_2$} (associative)
$$ e_1\circ e_1=e_2,~~~e_3\circ e_3= e_2. $$
Rota-Baxter operators $RB(\widehat{A}_{5,1})$ are:
\begin{itemize}
\item[]  $R_{1}(e_1) = a_{1}e_2,\  R_{1}(e_2)= a_{2}e_2,\ R_{1}(e_3)= 0.$
\item[]  $R_{2}(e_1) = a_{1}e_2,\ R_{2}(e_2)= a_{2}e_2,\ R_{2}(e_3)=2a_{2}e_3.$
\item[]  $R_{3}(e_1) = a_{3}e_1+a_{1}e_2,\  R_{3}(e_2)= \frac{a_{3}}{2}e_2,\ R_{3}(e_3)= 0.$
\item[]  $R_{4}(e_1) = a_{3}e_1+a_{1}e_2,\ R_{4}(e_2)= \frac{a_{3}}{2}e_2,\ R_{4}(e_3)=a_{3}e_3.$
  \end{itemize}
$\blacksquare$ \textsf{The pre-Lie superalgebra $(\widehat{A}_{5,2},\circ)\simeq D_3$} (associative).
$$ e_1\circ e_1=e_2. $$
Rota-Baxter operators $RB(\widehat{A}_{5,2})$ are:
\begin{itemize}
\item[] $R_{1}(e_1) = a_{1}e_1+a_{2}e_2,\ R_{1}(e_2)= \frac{a_{1}}{2}e_2,\ R_{1}(e_3)=a_{3}e_3.$
\item[]  $R_{2}(e_1) =a_{2}e_2,\ R_{2}(e_2)=a_{4}e_2,\ R_{2}(e_3)= 0.$
  \end{itemize}
$\blacksquare$ \textsf{The pre-Lie superalgebra $(\widehat{A}_{5,3},\circ)\simeq D_1$:}
$$ e_1\circ e_1=e_2,~~e_3\circ e_1=e_3. $$
Rota-Baxter operators $RB(\widehat{A}_{5,3})$ are:
\begin{itemize}
\item[]  $R_{1}(e_1) = a_{1}e_1+a_{2}e_2,\  R_{1}(e_2)= \frac{a_{1}}{2}e_2,\ R_{1}(e_3)=0.$
 \item[] $R_{2}(e_1) =a_{2}e_2,\ R_{2}(e_2)=a_{3}e_2,\ R_{2}(e_3)= 0.$
\end{itemize}
$\blacksquare$ \textsf{The pre-Lie superalgebra $(\widehat{A}_{5,4},\circ)\simeq D_1$:}
$$ e_1\circ e_1=e_2~,~~~e_3\circ e_2=e_3. $$
Rota-Baxter operators $RB(\widehat{A}_{5,4})$ are:
\begin{itemize}
\item[]  $R_{1}(e_1) = a_{1}e_1+a_{2}e_2,\  R_{1}(e_2)= \frac{a_{1}}{2}e_2,\ R_{1}(e_3)=0.$
 \item[] $R_{2}(e_1) =a_{2}e_2,\ R_{2}(e_2)=a_{3}e_2,\ R_{2}(e_3)= 0.$
  \end{itemize}
\subsubsection{\textbf{Rota-Baxter operators on pre-Lie superalgebras of type }$\widehat{A}_{6}$}$ $

$\blacksquare$ \textsf{The pre-Lie superalgebra $\Big((\widehat{A}_{6,1})_k,\circ\Big)\simeq (D_4)_\mu: k=0~~or~~-1$ associative other cases non-associative}
$$ ~e_1\circ e_2=-e_1,~~e_2\circ e_2=-e_2,~~~e_3\circ e_2=ke_3. $$
Rota-Baxter operators $RB((\widehat{A}_{6,1})_k)$ are:\\
$\triangleright$ \textbf{Case 1}: If $k=0$, we have
\begin{itemize}
\item[]  $R_{1}(e_1) = a_{1}e_2,\ R_{1}(e_2)= 0,\  R_{1}(e_3)=a_{2}e_3.$
\item[]  $R_{2}(e_1) = 0,\ R_{2}(e_2)=a_{3}e_1,\  R_{2}(e_3)=a_{2} e_3.$
  \end{itemize}
$\triangleright$ \textbf{Case 2}: If $k \in \mathbb{C}^\ast$, we have
\begin{itemize}
 \item[] $R_{4}(e_1) =0,\  R_{4}(e_2)=a_{3}e_1,\  R_{4}(e_3)=0.$
 \item[] $R_{5}(e_1) = a_{1}e_2,\  R_{5}(e_2)=0,\  R_{5}(e_3)=0.$
 \item[] $R_{5}(e_1) = a_{1}e_2,\  R_{5}(e_2)=0,\  R_{5}(e_3)=0,~~ a_{1}\neq 0.$
  \end{itemize}
$\blacksquare$ \textsf{The pre-Lie superalgebra $(\widehat{A}_{6,2},\circ)\simeq D_5$}:
$$ ~e_1\circ e_2=-e_1,~~e_2\circ e_2=-e_2,~~~e_3\circ e_2=-\frac{1}{2}e_3,~~e_3\circ e_3=e_1. $$
Rota-Baxter operators $RB(\widehat{A}_{6,2})$ are:
\begin{itemize}
\item[]  $R_{1}(e_1) = a_{1}e_2,\ R_{1}(e_2)=0,\ R_{1}(e_3)=0.$
 \item[] $R_{2}(e_1) =0,\ R_{2}(e_2)=a_{2}e_1,\ R_{2}(e_3)=0.$
  \end{itemize}
$\blacksquare$ \textsf{The pre-Lie superalgebra $\Big((\widehat{A}_{6,3})_k,\circ\Big)\simeq (D_4)_\mu:~~k=0~~or~~-1$ associative other cases are non-associative}
$$ ~e_1\circ e_2=-e_1,~~e_2\circ e_2=-e_2,~~~e_2\circ e_3=-e_3,~~e_3\circ e_2=ke_3. $$
Rota-Baxter operators $RB((\widehat{A}_{6,3})_k)$ are:
\begin{itemize}
\item[]  $R_{1}(e_1) = a_{1}e_2,\ R_{1}(e_2)=0,\ R_{1}(e_3)=0,~~a_{1}\neq 0.$
\item[]  $R_{2}(e_1) =0,\ R_{2}(e_2)=a_{2}e_1,\ R_{2}(e_3)=0.$
  \end{itemize}
\subsubsection{\textbf{Rota-Baxter operators on pre-Lie superalgebras of type }$\widehat{A}_{7_{h}}$}$ $

$\blacksquare$ \textsf{The pre-Lie superalgebra $(\widehat{A}_{7_{h},1},\circ)\simeq D_5$:}
$$ ~e_1\circ e_2=-e_1,~~e_2\circ e_2=h e_2,~~~e_3\circ e_2=-\frac{1}{2}e_3,~~e_3\circ e_3=e_1,~~h\neq -1. $$
Rota-Baxter operators $RB(\widehat{A}_{7_{h},1})$ are:
\begin{itemize}
\item[]  $R_{1}(e_1) = 0,\ R_{1}(e_2)=a_1e_1,\ R_{1}(e_3)=0.$
\item[]  $R_{2}(e_1) = 0,\ R_{2}(e_2)=a_2e_2,\ R_{2}(e_3)=0,~~h=0.$
\item[]  $R_{3}(e_1) =0,\ R_{3}(e_2)=0,\ R_{3}(e_3)=0,~~h\neq 0.$
  \end{itemize}
$\blacksquare$ \textsf{The pre-Lie superalgebra $\Big((\widehat{A}_{7_{h},2})_k,\circ\Big)\simeq (D_4)_\mu$:}
$$ ~e_1\circ e_2=-e_1,~~e_2\circ e_2=h e_2,~~~e_2\circ e_3=h e_3,~~e_3\circ e_2=ke_3,~~h\neq 0,-1. $$
Rota-Baxter operators $RB((\widehat{A}_{7_{h},2})_k)$ are:
\begin{itemize}
\item[]  $R_{1}(e_1) = 0,\ R_{1}(e_2)=a_1e_1,\ R_{1}(e_3)=0.$
\item[]  $R_{2}(e_1) =0,\ R_{2}(e_2)=0,\ R_{2}(e_3)=0.$
  \end{itemize}
$\blacksquare$ \textsf{The pre-Lie superalgebra $\Big((\widehat{A}_{7_{h},3})_k,\circ\Big)\simeq (D_4)_\mu$:}
$$ ~e_1\circ e_2=-e_1,~~e_2\circ e_2=h e_2,~~~e_3\circ e_2=ke_3,~~h\neq -1. $$
Rota-Baxter operators $RB((\widehat{A}_{7_{h},3})_k)$ are:
\begin{itemize}
\item[]  $R_{1}(e_1) = a_{1}e_2,\ R_{1}(e_2)=a_{2}e_1,\ R_{1}(e_3)=a_{3}e_3,~~h=-\frac{1}{2},~~k=0.$
 \item[] $R_{2}(e_1) =0,\ R_{2}(e_2)=a_{4}e_2,\ R_{2}(e_3)=0,~~h=0.$
 \item[] $R_{3}(e_1) = 0,\ R_{3}(e_2)=a_{4}e_2,\ R_{3}(e_3)=a_{3}e_3,~~h=k=0.$
 \item[] $R_{4}(e_1) = 0, \ R_{4}(e_2)=0,\ R_{4}(e_3)=a_{3}e_3,~~h\neq 0,~~k=0.$
\item[]  $R_{5}(e_1) =0,\ R_{5}(e_2)=a_{2}e_1,\ R_{5}(e_3)=0,~~h=-\frac{1}{2}.$
\item[]  $R_{6}(e_1) =0,\ R_{6}(e_2)=a_{2}e_1,\ R_{6}(e_3)=a_{3}e_3,~~h\neq -\frac{1}{2},~~k=0.$
\item[]  $R_{7}(e_1) = 0,\ R_{7}(e_2)=0,\ R_{7}(e_3)=0,~~h\neq -\frac{1}{2},0.$
  \end{itemize}
\subsubsection{\textbf{Rota-Baxter operators on pre-Lie superalgebras of type }$\widehat{A}_{8}$}$ $

$\blacksquare$ \textsf{The pre-Lie superalgebra $\Big((\widehat{A}_{8,1})_k,\circ\Big)\simeq (D_4)_\mu$:}
$$ ~e_1\circ e_1=2e_1,~~e_2\circ e_1=e_2,~~~e_2\circ e_2=e_1,~~e_3\circ e_1=ke_3. $$
Rota-Baxter operators $RB((\widehat{A}_{8,1})_k)$ are:
\begin{itemize}
 \item[] $R_{1}(e_1)= 0,\ R_{1}(e_2)=0,\ R_{1}(e_3)=a_{1}e_3,~~k=0.$
\item[]  $R_{2}(e_1) = 0,\ R_{2}(e_2)=0,\ R_{2}(e_3)=0.$
  \end{itemize}
\subsubsection{\textbf{Rota-Baxter operators on pre-Lie superalgebras of type }$\widehat{A}_{9}$}$ $

$\blacksquare$ \textsf{The pre-Lie superalgebra $\Big((\widehat{A}_{9,1})_k,\circ\Big)\simeq (D_4)_\mu$:} $k=0~~or~~1$ associative others cases are non-associative.
$$ ~e_2\circ e_1=e_1,~~e_2\circ e_2= e_2,~~~e_3\circ e_2=ke_3. $$
Rota-Baxter operators $RB((\widehat{A}_{9,1})_k)$ are:\\
$\triangleright$ \textbf{Case 1}: If $k=0$, we have
\begin{itemize}
\item[]  $R_{1}(e_1) =a_{1}e_1+a_{2}e_2,\ R_{1}(e_2)= -\frac{a_{1}^2}{a_{2}}e_1-a_{1}e_2,\ R_{1}(e_3)=a_{3}e_3,~~a_{2}\neq 0.$
\item[]  $R_{2}(e_1) =0,\ R_{2}(e_2)=a_{4}e_1,\ R_{2}(e_3)=a_{3}e_3.$
\item[]  $R_{3}(e_1) = 0,\ R_{3}(e_2)=0,\ R_{3}(e_3)=a_{3}e_3.$
  \end{itemize}
$\triangleright$ \textbf{Case 2}: If $k\in \mathbb{C}^\ast$, we have
\begin{itemize}
 \item[] $R_{4}(e_1) = 0,\ R_{4}(e_2)=a_{4}e_1,\ R_{4}(e_3)=0.$
\item[]  $R_{5}(e_1) = a_{1}e_1+a_{2}e_2,\ R_{5}(e_2)=-\frac{a_{1}^2}{a_{2}}e_1-a_{1}e_2,\ R_{5}(e_3)= 0,~~a_{2}\neq 0.$
\item[]  $R_{6}(e_1) = 0,\ R_{6}(e_2)=0,\ R_{6}(e_3)=0.$
  \end{itemize}
$\blacksquare$ \textsf{The pre-Lie superalgebras $\Big((\widehat{A}_{9,2})_k,\circ\Big)\simeq (D_4)_\mu$} and $(\widehat{A}_{9,3},\circ)\simeq D_5$:\\
$\Big((\widehat{A}_{9,2})_k,\circ\Big) \left\{
                                                  \begin{array}{ll}
                                                    e_2\circ e_1=e_1 & \hbox{} \\
                                                    e_2\circ e_2= e_2 & \hbox{} \\
                                                    e_2\circ e_3=e_3 & \hbox{} \\
                                                    e_3\circ e_1=ke_3 & \hbox{.}
                                                  \end{array}
                                                \right.
(\widehat{A}_{9,3},\circ) \left\{
                                                        \begin{array}{ll}
                                                          e_2\circ e_2=e_1 & \hbox{} \\
                                                          e_2\circ e_2= e_2 & \hbox{} \\
                                                          e_2\circ e_3=e_3 & \hbox{} \\
                                                          e_3\circ e_3=\frac{1}{2}e_1 & \hbox{} \\
                                                          e_3\circ e_3=e_1 & \hbox{.}
                                                        \end{array}
                                                      \right.$\\
 $ k=0~~or~~1$ ~~associative~~other ~~cases~~ are~~non~~associative.\\
They have the same Rota-Baxter operators $RB(\mathcal{A})$, that is,
\begin{itemize}
\item[]  $R_{1}(e_1) = a_{1}e_1+a_{2}e_2,\ R_{1}(e_2)=-\frac{a_{1}^2}{a_{2}}e_1-a_{1}e_2,\ R_{1}(e_3)= 0,~~a_{2}\neq 0.$
 \item[] $R_{2}(e_1) = 0,\ R_{2}(e_2)=a_{3}e_1,\ R_{2}(e_3)=0.$
\item[]  $R_{3}(e_1) = 0,\ R_{3}(e_2)=0,\ R_{3}(e_3)=0.$
  \end{itemize}
\subsubsection{\textbf{Rota-Baxter operators on pre-Lie superalgebras of type }$\widehat{A}_{10_{h}}$}$ $

$\blacksquare$ \textsf{The pre-Lie superalgebra $\Big((\widehat{A}_{10_{h},1})_k,\circ\Big)\simeq (D_4)_\mu$:}
$$ e_1\circ e_2=(h-1)e_1,~~e_2\circ e_1=h e_1,~~ e_2\circ e_2= e_1+h e_2 ,~~e_3\circ e_2=k e_3,~~~~h\neq 0.$$
Rota-Baxter operators $RB((\widehat{A}_{10_{h},1})_k)$ are:\\
$\triangleright$ \textbf{Case 1}: If $k=0$, we have
\begin{itemize}
\item[]  $R_{1}(e_1) =0,\ R_{1}(e_2)=a_{1}e_1,\ R_{1}(e_3)=a_{2}e_3.$
 \item[] $R_{2}(e_1) =0,\ R_{2}(e_2)=0,\ R_{2}(e_3)=a_{2}e_3.$
  \end{itemize}
$\triangleright$ \textbf{Case 2}: If $k\in \mathbb{C}^\ast$, we have
\begin{itemize}
\item[]  $R_{3}(e_1) =0,\ R_{3}(e_2)=a_{1}e_1,\ R_{3}(e_3)=0,~~a_{1}\neq 0.$
\item[]  $R_{4}(e_1) =0,\ R_{4}(e_2)=0,\ R_{4}(e_3)=0.$
  \end{itemize}
$\blacksquare$ \textsf{The pre-Lie superalgebras $\Big((\widehat{A}_{10_{h},2})_k,\circ\Big)\simeq (D_4)_\mu$} and $(\widehat{A}_{10_{h},3},\circ)\simeq D_5$, where\\
$\Big((\widehat{A}_{10_{h},2})_k,\circ\Big) \left\{
                                                        \begin{array}{ll}
                                                          e_1\circ e_2=(h-1)e_1 & \hbox{} \\
                                                          e_2\circ e_1=h e_1 & \hbox{} \\
                                                          e_2\circ e_2= e_1+h e_2,~~~~h\neq 0 & \hbox{} \\
                                                          e_2\circ e_3=h e_3 & \hbox{} \\
                                                          e_3\circ e_2=k e_3 & \hbox{}
                                                        \end{array}
                                                      \right.
(\widehat{A}_{10_{h},3},\circ)
\left\{
  \begin{array}{ll}
    e_1\circ e_2=(h-1)e_1 & \hbox{} \\
    e_2\circ e_1=h e_1 & \hbox{} \\
    e_2\circ e_2= e_1+h e_2,~~~~h\neq 0 & \hbox{} \\
    e_2\circ e_3=h e_3 & \hbox{} \\
    e_3\circ e_2=(h-\frac{1}{2}) e_3 & \hbox{} \\
    e_3\circ e_3=e_1 & \hbox{.}
  \end{array}
\right.
$\\
have the same Rota-Baxter operators, that is,
\begin{itemize}
\item[]  $R_{1}(e_1) =0,\ R_{1}(e_2)=a_{1}e_1,\ R_{1}(e_3)=0,~~a_{1}\neq 0.$
\item[]  $R_{2}(e_1) =0 ,\ R_{2}(e_2)=0,\ R_{2}(e_3)= 0.$
  \end{itemize}
\subsubsection{\textbf{Rota-Baxter operators on pre-Lie superalgebras of type }$\widehat{A}_{11}$}$ $

$\blacksquare$ \textsf{The pre-Lie superalgebra $\Big((\widehat{A}_{11,1})_k,\circ\Big)\simeq (D_4)_\mu$:}
$$ e_1\circ e_2=-e_1,~~e_2\circ e_2=e_1-e_2,~~e_3\circ e_2=ke_3. $$
Rota-Baxter operators $RB((\widehat{A}_{11,1})_k)$ are:\\
$\triangleright$ \textbf{Case 1}: If $k=0$, we have
\begin{itemize}
\item[]  $R_{1}(e_1) = 0,\ R_{1}(e_2)=a_{1}e_1,\ R_{1}(e_3)=a_{2}e_3,~~a_{1}\neq 0.$
\item[]  $R_{2}(e_1) = 0,\ R_{2}(e_2)=0,\ R_{2}(e_3)=a_{2}e_3.$
  \end{itemize}
$\triangleright$ \textbf{Case 2}: If $k\in \mathbb{C}^\ast$, we have
\begin{itemize}
\item[]  $R_{3}(e_1) = 0,\ R_{3}(e_2)=a_{1}e_1,\ R_{3}(e_3)=0,~~a_{1}\neq 0.$
\item[]  $R_{4}(e_1) = 0,\ R_{4}(e_2)=0,\ R_{4}(e_3)=0.$
  \end{itemize}
$\blacksquare$ \textsf{The pre-Lie superalgebras $(\widehat{A}_{11,2},\circ)\simeq D_5$} and $\Big((\widehat{A}_{11,3})_k,\circ\Big)\simeq (D_4)_\mu$, where
\begin{itemize}
\item[] \ \ $(\widehat{A}_{11,2},\circ)\ \ \ : \ \ \ e_1\circ e_2=-e_1,~~e_2\circ e_2=e_1-e_2,~~e_3\circ e_2=-\frac{1}{2} e_3,~~~~e_3 \circ e_3=e_1.$
\item[] $\Big((\widehat{A}_{11,3})_k,\circ\Big): \ \ \ e_1\circ e_2=-e_1,~~e_2\circ e_2=e_1-e_2,~~e_2\circ e_3=-e_3,~~e_3\circ e_2=k e_3.$
\end{itemize}
 They have the same Rota-Baxter operators, that is,
\begin{itemize}
 \item[] $R_{1}(e_1) = 0,\ R_{1}(e_2)=a_{1}e_1,\ R_{1}(e_3)=0,~~a_{1}\neq 0.$
 \item[] $R_{2}(e_1) = 0,\ R_{2}(e_2)=0,\ R_{2}(e_3)=0.$
  \end{itemize}
\begin{rem}Using the above classification and Corollary \ref{RB-pre==L-den}, one may construct the $2$ and $3$-dimensional $L$-dendriform superalgebras associated to the Rota-Baxter pre-Lie superalgebras of dimension $2$ and $3$ $($of weight zero$)$ described above.
\end{rem}

\paragraph{\textbf{Acknowledgment.} We would like to  thank  Chengming Bai for his valuable remarks and suggestions. }

\end{document}